\documentclass[11pt,oneside,reqno]{amsart}

\usepackage{mathrsfs} 
\usepackage{amsmath}
\usepackage{hyperref}
\usepackage{amsmath}
\usepackage{amssymb}
\newtheorem{theorem}{Theorem}[section]

\newtheorem{proposition}{Proposition}[section]
\newtheorem{corollary}{Corollary}[section]
\newtheorem{lemma}{Lemma}[section]

\newtheorem{remark}{Remark}[section]
\numberwithin{equation}{section}

\def \Z{\mathbb{Z}}
\begin{document}
\title{On Lehmer's Problem}
\author[Piyush Kumar Jha]{Piyush Kumar Jha}
\author[R. Thangadurai]{R. Thangadurai$^{1,2}$}
\address[Piyush Kumar Jha]{3rd year, B.Sc (Mathematics), Chennai Mathematical Institute, H1, SIPCOT IT Park, Siruseri, Kelambakkam 603103, India  }
\address[R. Thangadurai]{${}^{1}$Harish-Chandra Research Institute,   Chhatnag Road,  Jhunsi, Prayagraj 211019, India.}
\address{ ${}^{2}$ Homi Bhabha National Institute, Training School Complex, Anushaktinagar, Mumbai, 400094  India. }
 
\email[Piyush Kumar Jha]{piyushj.ug2024@cmi.ac.in}
\email[R. Thangadurai]{thanga@hri.res.in}
\subjclass[2010] {Primary 11R06; Secondary 11G50 }
\keywords{ Lehmer's Conjecture, Mahler Measure, Irreducibility}

\bigskip
\begin{abstract}
Let $m\geq 2$ be an integer, and let $d\geq 2$ be an even integer. We study monic, reciprocal, non-cyclotomic polynomials
$$
P(x)=x^d+a_{d-1}x^{d-1}+\cdots+a_1x+1\in\mathbb{Z}[x].
$$
For a fixed residue class $r\in\{0,1,\ldots,m-1\}$, we prove that there exists an effectively computable constant $c_{r,m}>1$ such that the Mahler measure 
$$
M(P)\geq c_{r,m}
$$
for every irreducible polynomial $Q$ whose inner coefficients satisfy
$$
a_j\equiv r\pmod m \qquad (1\leq j\leq d-1).
$$
Moreover, we explicitly compute the constant $c_{r,m}$ in terms of $r$ and $m$. For the residue classes $r=2$, $r=3$, and $r=m-1$, we obtain improved lower bounds for suitable ranges of the modulus~$m$.

We also consider monic, reciprocal, irreducible, non-cyclotomic polynomials whose inner coefficients belong to two distinct residue classes modulo $m$. More precisely, if the inner coefficients with odd indices are congruent to $a\pmod m$ and those with even indices are congruent to $b\pmod m$, where $a\not\equiv b\pmod m$, then we prove the existence of a constant $c_{m,a,b}>1$ such that the Mahler measure 
$$
M(P)\geq c_{m,a,b}.
$$
In particular, for $m=5$, $a=4$, and $b=2$, we obtain
$$
c_{5,4,2}\geq \left(\frac{976}{975}\right)^{1/3}.
$$
These results solve Lehmer's problem for several families of polynomials. We also construct infinitely many examples in every even degree $d\geq 2$ of monic, reciprocal, irreducible, non-cyclotomic polynomials over $\mathbb{Q}$, together with explicit information about their Mahler measures.

Our proofs rely on Y. Bilu's equidistribution theorem and its effective version proved by C. D'Andrea \emph{et al.}, as well as on a method developed by P. Borwein \emph{et al.}   While it is natural to consider Bilu's equidistribution results in conjunction with the Borwein {\it et al}. method to Lehmer's problem, we are not aware of any previous work in the literature that employs this particular approach.  
 \end{abstract}
\maketitle

\section{Introduction}

For a monic polynomial
$$
P(x)=x^d+a_{d-1}x^{d-1}+\cdots+a_1x+a_0\in\mathbb{Z}[x],
$$
the Mahler measure  $M(P)$ is defined by
$$
M(P)=\prod_{P(\alpha)=0}\max\{1,|\alpha|\},
$$
where the product is taken over all roots of $P$, counted with multiplicity.

By Kronecker's theorem, if $P\in\mathbb Z[x]$ is
monic, irreducible, and satisfies $P(0)\ne0$,
then $M(P)=1$ if and only if $P$ is cyclotomic. In 1933, D. H. Lehmer \cite{leh} asked whether, for every $\epsilon>0$, there exists a polynomial $P\in\mathbb{Z}[x]$ such that
$1<M(P)<1+\epsilon.$ This question is now known as Lehmer's problem.

Lehmer observed that the polynomial $P(x)=x^{10}+x^9-x^7-x^6-x^5-x^4-x^3+x+1$ has Mahler measure
$$
M(P)=1.176280\ldots.
$$
This remains the smallest known Mahler measure greater than $1$ for a non-cyclotomic polynomial with integer coefficients.

Lehmer's problem has been resolved for several special classes of polynomials. For example, in 1971, C. J. Smyth \cite{smyth01} proved that if $P(x)\in\mathbb{Z}[x]$ is non-reciprocal (even up to sign) and $P(0)\neq 0$, then
$$
M(P)\geq M(x^3-x-1)=1.324717\ldots.
$$
In 2007, P. Borwein \emph{et al.} \cite{borw} proved that if $P(x)\in\mathbb{Z}[x]$ is a monic polynomial of degree $d$ with no cyclotomic factors whose coefficients are all odd, then
$$
\log M(P)\geq
\frac{\log 5}{4}\left(1-\frac{1}{d+1}\right).
$$
In 2012, J. H. Silverman \cite{silver} proved that, for every $\epsilon>0$, there exists a constant $c_\epsilon>0$ such that the following holds. If $P\in\mathbb{Z}[x]$ is a monic polynomial of degree $d$ with no cyclotomic factors and $x^{n-1}+x^{n-2}+\cdots+x+1$
divides $P(x)$ in $(\mathbb{Z}/m\mathbb{Z})[x]$, where $m\geq 2$ and $n\geq \max\{\epsilon d,2\},$
then $\log M(P)\geq c_\epsilon\log m.$ Numerous further results are known under additional assumptions concerning the arithmetic properties of the associated number fields; see, for example, \cite{amo2,boy,fla,gar,gor1,gor,gor2,mig,mos1,luc,rhi,saunders,vou}.

Throughout this article, we consider monic reciprocal polynomials $P(x)\in\mathbb{Z}[x]$  with no cyclotomic factors. Since a reciprocal polynomial with constant term $1$ and no cyclotomic factors has even degree unless it is of degree $0$, the degrees considered below are even. Our main objective is to obtain explicit lower bounds for the Mahler measure of such polynomials under prescribed congruence conditions on their coefficients.

We first consider the case in which all the non-leading and non-constant coefficients belong to the same residue class modulo $m$.

\begin{theorem}\label{thm9.1}
Let $m\geq 2$ be an integer, and let $r\in\{0,1,\ldots,m-1\}.$ Let $\mathscr{P}_{m,r}$ denote the collection of all monic, reciprocal, irreducible, non-cyclotomic polynomials
$$
P(x)=x^d+a_{d-1}x^{d-1}+\cdots+a_1x+1\in\mathbb{Z}[x],
$$
of degree $d\geq 2$, satisfying $a_j\equiv r\pmod m$ for $1\leq j\leq d-1.$ Then there exists a computable constant $c_{r,m}>1$ such that
$$
\inf_{P\in\mathscr{P}_{m,r}}M(P)\geq c_{r,m}>1.
$$
\end{theorem}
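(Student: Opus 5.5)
The strategy combines the Borwein \emph{et al.}\ method with a dichotomy on the degree. Write
$$
P(x)\equiv x^d+1+r\,(x^{d-1}+\cdots+x)=x^d+1+r\,\frac{x^d-x}{x-1}\pmod m .
$$
Set $F(x)=(x-1)P(x)$. Then
$$
F(x)\equiv x^{d+1}-x^d+x-1+r(x^d-x)=x^{d+1}+(r-1)x^d+(1-r)x-1\pmod m,
$$
so $F=G+mH$ with $G(x)=x^{d+1}+(r-1)x^d+(1-r)x-1$ and $H\in\mathbb{Z}[x]$. The idea, as in \cite{borw}, is to exploit a resultant: for any $\zeta$ such that $|G(\zeta)|$ is small on the unit circle, or more robustly using Jensen's formula, one compares $\log M(P)$ with an average of $\log|G|$ and of $\log|H|$.

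\textbf{Large degree: reduction to small degree via equidistribution.} Since $P$ is irreducible and non-cyclotomic, we may assume $M(P)\le 2$, say. The effective form of Bilu's theorem (D'Andrea \emph{et al.}) then gives the following: if $h(\alpha)=\log M(P)/d$ is small and $d$ is large, the roots of $P$ are equidistributed on the unit circle, with discrepancy bounded by roughly $C\sqrt{(\log d)/d+\log M(P)}$. I would apply this to a suitable test function. Take a resultant identity $\mathrm{Res}(P,Q)\in\mathbb{Z}\setminus\{0\}$ for a fixed auxiliary polynomial $Q$ coprime to $P$ (for instance $Q(x)=x$, $x\pm1$, or $x^2+x+1$). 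Equivalently, use
$$
\frac1d\sum_{P(\alpha)=0}\log|Q(\alpha)|\ge0 .
$$
Equidistribution forces this average to approach $\int_0^1\log|Q(e^{2\pi it})|\,dt=\log M(Q)$. The contradiction is most naturally obtained from the congruence. Because $P(\alpha)=0$, we have $G(\alpha)\equiv 0$ modulo $m$ times an algebraic integer, that is, $G(\alpha)=-mH(\alpha)$. Hence
$$
\prod_{\alpha}|G(\alpha)|=m^{d}\prod_\alpha|H(\alpha)|,
$$
and the product $\prod_\alpha|H(\alpha)|=|\mathrm{Res}(P,H)|$ is a nonzero integer once we check that $P\nmid H$. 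Taking $\frac1d\log$ gives
$$
\frac1d\sum_\alpha\log|G(\alpha)|\ge \log m .
$$
On the other hand, equidistribution makes the left side close to $\int\log|G(e^{it})|$. The factor $x^d$ dominates, and $G(x)=x^d(x+r-1)+(1-r)x-1$, so $\log|G|\le\log(|r|+\ldots)$. This is the delicate point. Instead I would use the Borwein trick with the naive bound
$$
\frac1d\sum_\alpha \log|G(\alpha)|\le \frac1d\sum_\alpha\log\bigl(\max(1,|\alpha|)^{d+1}\cdot \|G\|_1\bigr),
$$
which is too crude. So the cleaner route is to pick, depending on $r$, a fixed polynomial $W_r$ of bounded degree with $G\equiv W_r\cdot(\text{unit-ish})$ modulo $m$ so that $\int\log|W_r|<\log m$. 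I would try to reduce $m$ to a prime divisor $p$ and choose $r$-dependent $W$, possibly reducing $x^d$ terms by averaging $d$ and $d+1$ shifted identities.

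\textbf{Small degree.} For $d\le D_0$, where $D_0$ is explicit from the discrepancy bound, there are only finitely many irreducible non-cyclotomic reciprocal polynomials with $M(P)\le 2$ and degree at most $D_0$, because the coefficients are bounded by $\binom{d}{j}M(P)$. Their minimum Mahler measure is therefore a computable constant $>1$ by Kronecker's theorem. Taking $c_{r,m}$ to be the minimum of this and the large-degree bound finishes the proof.

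\textbf{Main obstacle.} The hard part is producing, for every $r$ and $m$, a test function whose equidistributed average is strictly less than what the congruence forces, namely $\log m$ per root. The difficulty is sharpest for $r\in\{0,1,m-1\}$ and for small $m$ such as $m=2$. I expect to handle these with ad hoc auxiliary polynomials and a separate check that $P\nmid H$, which holds by irreducibility and degree whenever $H\ne0$.
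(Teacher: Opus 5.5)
Your setup is sound as far as it goes. Writing $(x-1)P=G+mH$ and using $\mathrm{Res}(P,H)\neq0$ to get $\frac1d\sum_\alpha\log|G(\alpha)|\ge\log m$ is essentially the paper's Lemma~\ref{lem:reslower}. Your finiteness argument for bounded degree is a legitimate substitute for Voutier's bound.

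But the large-degree step, which carries the whole theorem, is missing, as you acknowledge yourself: you never bound $\frac1d\sum_\alpha\log|G(\alpha)|$ from above by something smaller than $\log m$. One-variable equidistribution cannot be applied to $\log|G|$, because $G(x)=x^d(x+r-1)-(1+(r-1)x)$ changes with $d$. The remedies you list (a bounded-degree $W_r$, passing to a prime divisor of $m$, averaging shifted identities) are not carried out, and none of them explains how to control the term $\alpha^d$. There is also a second, independent problem. With $c=r-1$ taken literally, any upper bound in terms of $|c|$ is useless, e.g.\ for $r=m-1$, where $|r-1|=m-2$.

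The missing ideas, as in the paper, are as follows.

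First, take the centered representative $c\equiv r-1\pmod m$ with $|c|=\delta\le m/2$.

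Second, remove the $d$-dependence using $|z^d(z+c)-(1+cz)|\le|z|^d|z+c|+|1+cz|$ together with $|1+cz|=|z+c|$ on $|z|=1$. For roots off the circle, the paper makes this rigorous by pairing $\alpha$ with $\alpha^{-1}$ (Lemma~\ref{lem:pair}). The result is $\frac1d\log|\mathrm{Res}(P,A_{d,c})|\le(1+\frac8d)\log M(P)+\frac1d\sum_\beta\log|\beta+c|+\log2$.

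The test function $\log|z+c|$ is now fixed, and it is continuous near $\mathbb S^1$ when $\delta\ge2$, with circle average $\log\delta$. So Bilu's theorem (effectively, D'Andrea \emph{et al.}) yields $\log m\le\log(2\delta)$, a contradiction exactly when $m>2\delta$.

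The residual cases are not the ones you predicted; $r=m-1$ has $\delta=2$ and is generic for $m\ge5$. They are instead:
\begin{itemize}
\item $\delta\le1$, i.e.\ $r\in\{0,1,2\}$, where the pairing lemma (which needs $|c|\ge2$) is unavailable;
\item $\delta=m/2$, where the factor $2$ is fatal.
\end{itemize}
These need other inputs: resultants with $x^d\pm1$ for $r=0$; Borwein \emph{et al.}\ for $r=1$, and, after reducing modulo $m/2$, for $\delta=m/2$; and Silverman for $r=2$.

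You must also dispose of the case $H=0$, i.e.\ $P=R_{d,c}$, which Lemma~\ref{lem:newrdc} handles via $M(R_{d,c})\ge3/2$.

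Your instinct to eliminate $x^d$ can also be realized by applying Bilu in dimension two to $(\alpha,\alpha^d)$ with $F(z,w)=w(z+c)-(1+cz)$, as in the proof of Theorem~\ref{thm:abmodm}. This gives $m\le\max\{1,\delta\}$ uniformly in $r$, but as it stands it is only qualitative and does not produce a computable constant.
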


We compute the constant $c_{r,m}$ explicitly using the effective version of equidistribution result of C. D'Andrea \emph{et al.}, in the following corollary.

\begin{corollary}\label{thm9.1-cor1}
Under the hypotheses of Theorem \ref{thm9.1}, for $1\le r\le m-1$ let $c\equiv r-1\pmod m$ be chosen with $\delta=|c|=\min_{t\in\mathbb{Z}}|r-1+tm|$ (the case $r=0$ is treated separately). Then we have 
\begin{equation}\label{eq:6.14}
c_{r,m}=\left\{
 \begin{array}{ll}
 {\sqrt{m^2+1}\over2},&r=0,\\[5pt]
 \left(\frac{\sqrt{m^2+1}}{2}\right)^{2/3},&r=1,(\mbox{P. Borwein et al. \cite{borw}})\\[5pt]
 m^{1/123341},&r=2\text{ and }m\geq3, (\mbox{J. H. Silverman \cite{silver}})\\[5pt]
 \left(\frac{\sqrt{(m/2)^2+1}}{2}\right)^{2/3},&m\geq4\text{ even and }\delta=m/2,\\[5pt]
 \exp(\eta(m,\delta)),&\delta\geq2\text{ and }2\delta<m.
 \end{array}\right.
\end{equation}
Here  $\eta(m, \delta)$ is defined as 
\begin{equation}\label{eq:6.7}
\eta(m,\delta)=\min\left\{
 \log{3\over2},\ \log{3+\sqrt5\over2},\ V(D_0),\
 {\Delta\over40},\ {9\Delta^2\over128K_\delta^2}
 \right\}
\end{equation}
where 
\begin{equation}\label{eq:6.4}
\Delta=\log{m\over2\delta}>0,\qquad
 K_\delta={36\pi\over\delta-1}
\end{equation}
and $D_0=D_0(m,\delta)$ is the least even integer $D_0\geq4$ satisfying
\begin{equation}\label{eq:6.5}
8K_\delta\sqrt{{\log(D_0+1)\over D_0}}\leq{\Delta\over4}
\end{equation}
and  by setting 
\begin{equation}\label{eq:6.6}
V(D_0)=\min_{\begin{array}{c}4\leq d<D_0\\d\text{ even}\end{array}}
 {1\over4}\left({\log\log d\over\log d}\right)^3,
\end{equation}
with the minimum of the empty set interpreted as $+\infty$.
\end{corollary}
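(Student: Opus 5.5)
We treat the cases of \eqref{eq:6.14} one at a time. In each case we find an auxiliary polynomial $F$ with $F\equiv P\pmod m$ or $F\equiv \lambda P\pmod m$. The tool is the Borwein \emph{et al.}\ resultant method. If $P$ is irreducible and non-cyclotomic and $F\in\mathbb Z[x]$ is coprime to $P$, then $\mathrm{Res}(P,F)$ is a nonzero integer divisible by $m^{d}$ whenever $F\equiv \lambda P\pmod m$ coefficientwise. Taking logarithms and expressing $\mathrm{Res}(P,F)=\prod_{P(\alpha)=0}F(\alpha)$ gives
\[
d\log m\le \sum_{P(\alpha)=0}\log|F(\alpha)|.
\]
This sum is controlled by $d\log M(P)$, through $|F(\alpha)|\le \|F\|_1\max(1,|\alpha|)^{\deg F}$ or a sharper $L^2$/Jensen estimate on the unit circle.

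\textbf{Case $r=0$.} Here $P\equiv x^d+1\pmod m$, and we take $F=x^{d}+1$ after adjusting by a suitable power of $x$. We then use $F(\alpha)=\alpha^{d}+1$ together with reciprocity: the roots come in pairs $\alpha,1/\alpha$, and on the unit circle one has the averaging $\int\log|x^k+1|=0$. Optimising over the pairing should give $M(P)\ge \sqrt{m^2+1}/2$, exactly as in the odd-coefficient computation of \cite{borw} with $2$ replaced by $m$.

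\textbf{Case $r=1$.} Here $P\equiv (x^{d+1}-1)/(x-1)\pmod m$, and we take $F=x^{d+1}-1$, which is precisely the setting of \cite{borw}. Tracking the constant $\sqrt{m^2+1}/2$ through their argument produces the exponent $2/3$.

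\textbf{Case $\delta=m/2$ with $m$ even.} Here $r-1\equiv m/2$. Reducing modulo $m/2$ lands us in the case $r\equiv 1\pmod{m/2}$, so we apply the previous case with $m$ replaced by $m/2$.

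\textbf{Case $r=2$.} Here $P-(x^d+1)$ is, modulo $m$, twice the truncated geometric sum. One checks that $x^{n-1}+\dots+1$ divides $P$ in $(\Z/m\Z)[x]$ with $n$ proportional to $d$. Silverman's theorem \cite{silver} then applies with an explicit $\epsilon$, and the constant $1/123341$ is the explicit $c_\epsilon$ that results.

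\textbf{Generic case $\delta\ge2$, $2\delta<m$.} This is where equidistribution enters. Modulo $m$ we have $(x-1)P\equiv (x^{d+1}-1)+c\,x(x^{d-1}-1)$ with $|c|=\delta$. So $m$ divides every coefficient of $(x-1)P-G$, where $G=x^{d+1}-1+cx^d-cx$ has small height. Since $G$ and $(x-1)P$ share no common factor, $|\mathrm{Res}(P,G)|\ge m^{d}$ up to a bounded correction. On the other hand, $\frac1d\log|\mathrm{Res}(P,G)|=\frac1d\sum\log|G(\alpha)|$. If $\log M(P)/d$ is small, the effective Bilu theorem of D'Andrea \emph{et al.}\ says this average is within $8K_\delta\sqrt{\log(d+1)/d}$ plus a height term of $\int_0^1\log|G(e^{2\pi it})|\,dt$. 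The constant $K_\delta$ comes from the Lipschitz/modulus-of-continuity control of $\log|G|$ away from its zeros. That integral is at most $\log(2\delta)$ by Jensen, since $G$ has leading and trailing sizes about $\delta$ on the circle.

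Comparing the two bounds gives $\Delta=\log(m/2\delta)\le$ error terms. For $d\ge D_0$ the hypothesis \eqref{eq:6.5} makes the discrepancy at most $\Delta/4$, which forces $\log M(P)\ge\min\{\Delta/40,\,9\Delta^2/128K_\delta^2\}$. The two terms in this minimum correspond to whether the height term or the discrepancy term dominates.

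For even $d<D_0$ we use Dobrowolski-type bounds $\log M(P)\ge\frac14(\log\log d/\log d)^3$, which gives $V(D_0)$. For the smallest degrees we check directly: degree $2$ gives $\log\frac{3+\sqrt5}2$, and the small degrees and pathological residues give the bound $\log\frac32$.

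\textbf{Main obstacle.} The delicate step is the generic case. We must make the D'Andrea \emph{et al.}\ error explicit for the test function $\log|G|$, which has logarithmic singularities on or near the circle. This requires a regularisation whose error is absorbed into $K_\delta=36\pi/(\delta-1)$. We must also verify the coprimality of $P$ and $G$ using irreducibility.
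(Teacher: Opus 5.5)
The cases $r=1$, $r=2$ and $\delta=m/2$ match the paper. The generic case $\delta\ge2$, $2\delta<m$, which is the only place equidistribution is used, has a genuine gap.

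\textbf{The gap.} You plan to apply Theorem \ref{sombra} to (a regularisation of) $\log|G|$ with $G=A_{d,c}=x^{d+1}+cx^d-cx-1$. This test function depends on $d$. On the circle
\[
|A_{d,c}(e^{it})|=|e^{it}+c|\cdot\left|e^{idt}-\frac{1+ce^{it}}{e^{it}+c}\right|,
\]
and the last fraction is unimodular. So $\log|G|$ oscillates at frequency about $d$ and has of the order of $d$ logarithmic singularities on $\mathbb{S}^1$, one of them at $z=1$.

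Any Lipschitz $F$ that is $L^1$-close to $\log|G|$ must follow these oscillations. Hence $\operatorname{Lip}(F)$ and $\|\widehat{F_0'}\|_1$ grow at least linearly in $d$, and the error bound in \eqref{eq:2.1} is of order $\sqrt{d\log d}$. Nothing can be absorbed into a $d$-independent constant.

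In fact the constants you quote do not come from $\log|G|$:
\begin{itemize}
\item $K_\delta=36\pi/(\delta-1)$ is the bound on $c(F_c)$ for the fixed function $F_c(z)=\log|c+z/|z||$.
\item $\log(2\delta)$ is not a Jensen evaluation of $\int\log|G|$. That integral equals $\log M(A_{d,c})$, which tends to $\log\delta$.
\end{itemize}
A function of $\alpha^d$ can only be handled through the joint distribution of $(\alpha,\alpha^d)$, as in the non-effective proof of Theorem \ref{thm:abmodm}.

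\textbf{The missing idea} is Lemma \ref{lem:pair}, which removes the $d$-dependence before equidistribution is invoked.
\begin{itemize}
\item For a root $\alpha$ with $|\alpha|\ge1$, write $A_{d,c}(\alpha)=\alpha^d(\alpha+c)\left(1-\frac{1+c\alpha}{\alpha^d(\alpha+c)}\right)$.
\item Since $(1+cz)/(z+c)$ is a Blaschke factor of modulus $\ge1$ on $|z|\ge1$, the bracket is at most $2|(1+c\alpha)/(\alpha+c)|$. This triangle inequality is the real source of the $\log 2$, and hence of the hypothesis $m>2\delta$.
\item Pair $\alpha$ with $\alpha^{-1}$ via $A_{d,c}(\alpha^{-1})=-\alpha^{-(d+1)}A_{d,c}(\alpha)$, and bound $\log|(1+c\alpha)/(\alpha+c)|\le8\log|\alpha|$ on $2/3<|z|<3/2$.
\end{itemize}
This gives
\[
\log m\le\left(1+\frac8d\right)H+\frac1d\sum_{P(\beta)=0}\log|\beta+c|+\log2 .
\]
Now the test function $\log|z+c|$ is fixed and smooth near $\mathbb{S}^1$, with circle average $\log\delta$. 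Replace it by $F_c$ at cost $R_\delta|\log|\beta||$ per root (total $2R_\delta H$ by reciprocity). Applying Theorem \ref{sombra} to $F_c$ then yields \eqref{eq:6.11}, and $\Delta/40$ and $9\Delta^2/(128K_\delta^2)$ follow as you describe.

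\textbf{Smaller points.}
\begin{itemize}
\item $G$ and $(x-1)P$ do share the factor $x-1$. The bound $|\operatorname{Res}(P,A_{d,c})|\ge m^d$ needs $P\ne R_{d,c}$, which irreducibility does not give. The paper disposes of $P=R_{d,c}$ by Lemma \ref{lem:newrdc}.
\item That case, together with the standing assumption $H<\log\frac32$ (which confines the roots to the annulus where the corrections above are uniform), is where the term $\log\frac32$ in $\eta$ comes from.
\item Your $r=0$ sketch does not produce $\sqrt{m^2+1}/2$. $\operatorname{Res}(P,x^d+1)$ alone gives only $M(P)\ge m/2$. Lemma \ref{lem:zero} combines it with the nonzero integer $\operatorname{Res}(P,x^d-1)$ via $(1-y)(1+y)^{m^2}$ at $y=\alpha^d$ and its exact sup norm on $|y|=1$; no averaging or reciprocity is involved.
\end{itemize}
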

\begin{remark}
In the definition of $D_0 = D_0(m,\delta)$, since $\log (x+1)/x$ is a decreasing function for all $x\geq 2$, we see that  \eqref{eq:6.5} holds for every integer $d\geq D_0$. 
\end{remark}
For the residue classes $r=2,3$, and $m-1$, we obtain stronger estimates.

 \begin{theorem}\label{thm:2,3,-1-residues}
Let $m, d\ge 2$ be integers. Let $P(x)=x^d+a_{d-1}x^{d-1}+\cdots+a_1x+1\in \mathbb{Z}[x]$ be a monic, reciprocal, irreducible polynomial which 
satisfies $a_j\equiv r\pmod m$ for $1\leq j\leq d-1$. Suppose that $P$ has no cyclotomic factors. Then the following estimates hold whenever their stated conditions are satisfied; in overlapping cases, each applicable estimate holds.
$$
M(P) \geq \left\{\begin{array}{ll}
\exp\left(\frac{3}{19}\left(\frac{\log2}{2}-\frac{\log(19/2)+1}{17}\right)\right) \geq 1.0248275855\ldots  & r = 2, m\geq 3\\
 \left(\frac{3\sqrt3}{4}\right)^{d/(3d+3)} \geq \left(\frac{3\sqrt3}{4}\right)^{2/9} & r = 2, m = 3\\
 \left(\frac{m}{4}\right)^{d/(d+1)}\geq \left(\frac{m}{4}\right)^{2/3} & r= 2, m\geq 5\\
 2^{3/7} & r = 3, m =5\\
 2^{d/(2d+2)} & d\geq 6, r = 3, m = 5\\
  \min\left\{2, \left(\frac{m}{6}\right)^{d/(d+1)}\right\}\geq \min\left\{2,\left(\frac{m}{6}\right)^{2/3}\right\}& r =3, m\geq 7\\
   2^{3/7}& r = 4, m = 5\\
    2^{d/(2d+2)} \geq 2^{3/7}& d\geq 6, r = 4, m = 5 \\
    \min \left\{\rho,\left(\frac{m}{6}\right)^{2/3}\right\}& r = m-1, m\geq 7, 
   \end{array}\right.
   $$
   where $\rho$ is the largest real root of $x^4-x^3-x^2-x+1$. 
   \end{theorem}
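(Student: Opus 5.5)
The plan is to run the Borwein \emph{et al.} method on a suitable auxiliary polynomial. Given $P$ with $a_j\equiv r\pmod m$ for $1\le j\le d-1$, multiply by $(x-1)$ and use the identity
\[
(x-1)P(x) = x^{d+1} + (a_{d-1}-1)x^d + \cdots + (a_0 - a_1)x - 1 .
\]
The inner coefficients $a_{j-1}-a_j$ of $(x-1)P(x)$ are all $\equiv 0\pmod m$, except those at $x^1$ and $x^d$, which are $\equiv 1-r$. Hence
\[
(x-1)P(x)\equiv x^{d+1}+(r-1)x^d-(r-1)x-1 \pmod m .
\]
So $(x-1)P(x)=F(x)+mG(x)$, where $F$ is an explicit sparse polynomial depending only on $r$ and $d$, and $G\in\mathbb{Z}[x]$.

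The key tool is the Borwein--Dobrowolski--Mossinghoff style inequality. If $P$ is irreducible and non-cyclotomic, then $P$ and $F$ share no root unless $P\mid F$. Then the resultant, or more precisely the product $\prod_{P(\alpha)=0}|F(\alpha)|$, is a nonzero integer divisible by $m^{d}$, since $F(\alpha)=-mG(\alpha)+(\alpha-1)P(\alpha)$. Combining this with the bound $|F(\alpha)|\le$ (length of $F$)$\cdot\max(1,|\alpha|)^{d+1}$ gives
\[
m^d\le \prod|F(\alpha)|\le L(F)^d M(P)^{d+1},
\]
that is, $M(P)\ge (m/L(F))^{d/(d+1)}$. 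The sharper variants use the $L^2$ (Parseval) or Mahler-measure version of the upper bound for $\prod|F(\alpha)|$ in place of the crude length.

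\textbf{Case $r=2$.} Here $F=x^{d+1}+x^d-x-1=(x+1)(x^d-1)$. This is exactly the congruence "$\Phi$ divides $P$ mod $m$" in a weak form, so Silverman-type reasoning applies. Taking $L(F)=4$ gives $(m/4)^{d/(d+1)}$ for $m\ge5$. For $m=3$, I would instead evaluate $\prod|F(\alpha)|$ via $\sup_{|z|=1}|(z+1)(z^d-1)|$ on a suitable combination, with a cube-root style exponent yielding $(3\sqrt3/4)^{d/(3d+3)}$.

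For the uniform bound $1.0248\ldots$ valid for all $m\ge3$, I would bound $\log M(P)\ge \frac1d\sum\log|(\alpha+1)(\alpha^d-1)|-\ldots$. The argument would use Jensen's formula together with an equidistribution/discrepancy estimate for roots close to the unit circle, splitting on whether $d\le 17$ or $d\ge 19$. This explains constants such as $\log(19/2)/17$ and the factor $3/19$: one presumably optimizes a threshold where either many roots are off the circle or the mean of $\log|\alpha^d-1|$ is controlled.

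\textbf{Cases $r=3,4$ and $r=m-1$.} Here $F=x^{d+1}+(r-1)x^d-(r-1)x-1$, with $r-1$ replaced by its least absolute residue: $2$ for $r=3$; $-2$ for $r=4,m=5$; and $-2$ for $r=m-1$. Each such $F$ has length $6$, which gives $(m/6)^{d/(d+1)}$ for $m\ge7$.

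The exceptional cases must be handled separately. If $P$ divides $F$, then $M(P)\le M(F)$. For $F=x^{d+1}\pm2x^d\mp2x-1$ one identifies the non-cyclotomic factor: $F$ has a real root near $\mp2$ (more precisely, $M(F)$ tends to $2$), which gives the $\min\{2,\cdot\}$. For $r=m-1$ the relevant factor's measure is bounded below by $\rho$, the Salem-type root of $x^4-x^3-x^2-x+1$, found by checking small $d$.

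For $m=5$, $m/6<1$, so the length bound is useless. Instead I would use $F\equiv$ a polynomial mod $5$ with coefficients reduced to $\pm2$ and the identity $2\cdot3\equiv1$. That is, I would replace $F$ by $3F$ or $(x+3)$-type multiples to get an $F'$ whose product over roots is forced divisible by $5^d$ but with $L^2$-norm giving $2^{d/(2d+2)}$. Small $d<6$ would be checked directly, giving $2^{3/7}$.

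\textbf{Main obstacle.} I expect the hardest part to be the uniform $r=2$ bound $1.0248\ldots$ for all $m\ge3$, especially for $m=3,4$ where the trivial length bound fails. The difficulty is that it needs a genuine analytic argument (Jensen plus counting roots of $P$ near $-1$ and near $d$-th roots of unity) rather than the simple divisibility trick. I would first try to reproduce it via Silverman's method with an explicit $\epsilon$-parameter, choosing $n=d$.
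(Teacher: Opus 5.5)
Your single-lift argument matches the paper on the easy rows. The lift $(1-x)P\equiv(1+x)(1-x^d)\pmod m$ has $\|\cdot\|_\infty\le 4$, giving $(m/4)^{d/(d+1)}$ for $r=2$, $m\ge5$. The lift $1\pm2x\mp2x^d-x^{d+1}$ has norm at most $6$, giving $(m/6)^{d/(d+1)}$ for $r=3$ and $r=m-1$, $m\ge7$.

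The remaining rows are not proved, and the first of them has real content. The uniform $r=2$ bound is the \emph{only} estimate available for $m=4$, where your lift gives $(4/4)^{d/(d+1)}=1$. The numbers $17$ and $19$ are not degree thresholds, and no equidistribution is involved. The missing idea is that $(x^d-1)/(x-1)$ divides $P$ modulo $m$. Hence $|\operatorname{Res}(P,x^{jd}-1)|\ge m^{d-1}$ for \emph{every} $j\ge1$, because the cofactors $x-1$ and $1+x^d+\cdots+x^{(j-1)d}$ are cyclotomic and contribute nonzero integers. Write $\log|\alpha^{jd}-1|=jd\log^+|\alpha|+\log|1-z^j|$ with $z=\alpha^{\pm d}$, $|z|\le1$. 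Multiply by the Fej\'er weights $w_j=1-j/(J+1)$, sum over $1\le j\le J$, and apply Silverman's inequality $\sum_j w_j\log|1-z^j|\le\frac12\log(\frac J2+1)+\frac12$ (Lemma~\ref{lem:complex}). This gives $\log M(P)\ge\frac{3}{J+2}\left(\frac{d-1}{d}\log m-\frac{\log(J/2+1)+1}{J}\right)$. Taking $J=17$, $(d-1)/d\ge\frac12$ and $\log m\ge\log2$ produces exactly the stated constant. Your closing remark about Silverman's method points the right way, but none of these steps is in the proposal.

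For $m=5$ ($r=3,4$) and $m=3$ ($r=2$), the single-lift inequality is vacuous. Your $F$ and its reduced unit multiples (e.g.\ $3F$) have sup norm tending to $6>5$, resp.\ $4>3$. Your proposed repair does not work either. Neither $\|F\|_2$ nor $M(F)$ bounds $\prod_{P(\alpha)=0}|F(\alpha)|$ for an individual $P$, since roots of $P$ can cluster where $|F|$ is large on the circle. Only the sup norm on $|z|=1$ does this (Lemma~\ref{lem:resultant-mahler}).

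What is needed is several \emph{different} integer lifts of unit multiples of $(1-x)P\bmod m$. Each must have a nonzero resultant with $P$ divisible by $m^d$, and their product must have sup norm below $m^k$, which works because the lifts do not peak at the same points.
\begin{itemize}
\item For $m=5$, take $A_d=1+2x-2x^d-x^{d+1}$ and $B_d=2-x+x^d-2x^{d+1}\equiv2A_d$. Then $\|A_dB_d\|_\infty\le25/2$, so $5^{2d}\le(25/2)^dM(P)^{2d+2}$; the cases $d=2,4$ need direct checks.
\item For $m=3$, take the three lifts $1+x-x^d-x^{d+1}$, $2-x+x^d-2x^{d+1}$ and $-1+2x-2x^d+x^{d+1}$, whose product has norm at most $12\sqrt3<27$.
\end{itemize}
You must also rule out $P$ dividing an extra lift; for instance $B_d=2(1-x)P$ would force $2a_1-2=-1$, which is impossible.

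Finally, in the exceptional case the point is not $M(P)\le M(F)$. Rather, $P\mid F$ forces $P=R_{r,d}$, and you then need a lower bound on $M(R_{r,d})$ valid for all admissible even $d$. For $r=3$, $R_{3,d}(-2)=-1$ gives a root below $-2$. For $r=m-1$, ``checking small $d$'' does not suffice. Instead, on $(1,2)$ compare $(x-1)R_{-1,d}(x)=x^d(x-2)+2x-1\le(x-1)R_{-1,4}(x)$ and evaluate at $x=\rho$.
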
 
 \begin{remark} 
 The pairs $(r,m)$ not listed in Theorem~\ref{thm:2,3,-1-residues} are covered by Corollary~\ref{thm9.1-cor1}. For example, for $r=3$ and $m=6$, reduction modulo $3$ places $P$ in the class $r=0$ modulo $3$, and Lemma~ \ref{lem:zero} gives $M(P)\ge\sqrt{10}/2$; for $r=m-1$ with $m=4$ we have $\delta=2=m/2$, and for $r=m-1$ with $m=6$ we have $\delta=2$ and $2\delta<m$. 
 \end{remark}
 We next consider the case in which the inner coefficients belong to two distinct residue classes modulo $m$.

\begin{theorem}\label{thm:abmodm}
 Let $m\geq 3$ be an integer. Let $0\le a,b\le m-1$ with $a\ne b$. For each integer $d\geq1$, let $\mathscr{P}_{m,a,b}(d)$ be the collection of all monic, reciprocal and irreducible polynomials $P(x) = x^{2d} + c_{2d-1}x^{2d-1}+\cdots+c_2x^2+c_1x+1\in \mathbb{Z}[x]$ that are non-cyclotomic such that $c_j = c_{2d-j}$, $c_{2j+1} \equiv a\pmod{m}$ for $0\leq j\leq d-1$ and $c_{2j}\equiv b\pmod{m}$ for $1\leq j\leq d-1$. Set $\mathscr{P}_{m,a,b}=\bigcup_{d\geq1}\mathscr{P}_{m,a,b}(d)$. Then we have 
 $$
 \inf_{P\in \mathscr{P}_{m,a,b}} M(P) > 1.
 $$
 In other words, there exists a constant $c_{m,a,b} > 1$, depending only on $m, a$ and $b$ such that  every $P\in \mathscr{P}_{m,a,b}$ satisfies $M(P) \geq c_{m,a,b}$. In particular, we have  
 $$
c_{5,4,2} \geq  \left(\frac{976}{975}\right)^{1/3}.
 $$
 \end{theorem}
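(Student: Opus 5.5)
Our approach follows the Borwein \emph{et al.}\ method, applied after reducing $P$ modulo $m$ to a fixed rational function. Write $\bar P$ for the reduction of $P$ modulo $m$. Since $c_{2j+1}\equiv a$ and $c_{2j}\equiv b$, the congruence pattern gives
\[
(1-x^2)P(x)\equiv 1+ax+(b-1)x^2+(a-1)\cdot 0\cdots \pmod m.
\]
More precisely, a telescoping computation shows that $(1-x^2)P(x)$ is congruent modulo $m$ to a polynomial $F(x)$ with only a bounded number of nonzero terms near $x^0$ and near $x^{2d+2}$. Concretely,
\[
F(x)=1+ax+(b-1)x^2-(b-1)x^{2d}-ax^{2d+1}-x^{2d+2}
\]
up to sign conventions. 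So we can write
\[
(1-x^2)P(x)=F_d(x)+mG(x),\qquad G\in\mathbb Z[x].
\]

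Next we use the resultant/evaluation trick of Borwein \emph{et al.} Take the Mahler measure of $(1-x^2)P(x)-F_d(x)=mG(x)$, or more efficiently work on the unit circle. Since $a\ne b$, the polynomial $F_d$ is not $\equiv 0$, and for reciprocal $P$ one can use the symmetric structure. We then split into two cases.

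\textbf{Case 1: $P$ divides neither factor.} If $P\nmid F_d$, we take the resultant $\operatorname{Res}(P,F_d)$. It is a nonzero integer divisible by $m^{\deg P}$, because $F_d\equiv(1-x^2)P \pmod m$ vanishes mod $m$ at the roots of $P$. Combined with
\[
|\operatorname{Res}(P,F_d)|\le M(P)^{\deg F_d}\,\|F_d\|_{\infty\text{ on roots}}^{\deg P},
\]
this yields $m^{2d}\le M(P)^{2d+2}\prod_\alpha |F_d(\alpha)|$. The product over the roots $\alpha$ of $P$ is controlled by Bilu's equidistribution theorem. If $M(P)$ is close to $1$, the roots of $P$ (which has no cyclotomic factor and is irreducible) equidistribute on the unit circle. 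Hence
\[
\frac1{2d}\sum\log|F_d(\alpha)|\to\int_0^1\log|F_d(e^{2\pi i t})|\,dt=\log M(F_d).
\]
Here $M(F_d)$ is bounded uniformly in $d$ by something like $M(1+ax+(b-1)x^2)$ times a constant. We would reduce coefficients to the symmetric residues $|a|,|b-1|\le m/2$ to get this strictly below $m$. The effective version of D'Andrea \emph{et al.}\ (as used in Corollary~\ref{thm9.1-cor1}) turns this into an explicit gap, giving a lower bound of shape $M(P)\ge\exp(\eta)$ for large $d$. Small $d$ are handled by the unconditional Dobrowolski/Voutier-type bound $\frac14(\log\log d/\log d)^3$, as in \eqref{eq:6.6}.

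\textbf{Case 2: $P$ divides $F_d$.} Then the roots of $P$ are roots of a sparse polynomial with bounded coefficients. We expect to treat this directly, either by bounding $M(P)\le M(F_d)$ from below via the explicit factor structure, or by showing that such $P$ must be cyclotomic or have a bounded list of possibilities.

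For the specific value $c_{5,4,2}$ we take symmetric residues $a=-1$, $b=2$, so $F_d$ has very small coefficients. In the non-equidistributed regime we would instead use the direct inequality $5^{2d}\le M(P)^{2d+2}\cdot\max|F_d|^{2d}$ with a sharper estimate of $\prod|F_d(\alpha)|$. A plausible route to $(976/975)^{1/3}$ is to pass to an auxiliary multiple such as $(1+x)(1-x^2)P$ and use the minimum-degree case $d=1$, which produces the exponent $1/3$.

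The main obstacle is the uniformity in $d$ of the integral bound: the $x^{2d}$ tail terms of $F_d$ interact with the roots. We would handle this by pairing the terms via reciprocity, writing $F_d(x)=f(x)-x^{2d+2}f(1/x)$ with $f(x)=1+ax+(b-1)x^2$. On the unit circle this gives $|F_d|\le 2|f|$, so the integral is at most $\log 2+\log M(f)$. We still need to check that this stays below $\log m$ for every admissible $(a,b)$; we expect small $m$ to require separate ad hoc treatment.
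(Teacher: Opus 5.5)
There is a genuine gap at the central step. You assert that Bilu's theorem gives $\frac{1}{2d}\sum_{P(\alpha)=0}\log|F_d(\alpha)|\to\log M(F_d)$. The one-variable theorem only controls averages of a \emph{fixed} test function, whereas $F_d$ depends on $d$ through $x^{2d}$, and equidistribution of the roots $\alpha$ says nothing about where $\alpha^{2d}$ lies. The effective Theorem~\ref{sombra} does not help either: any Lipschitz regularization of $\log|F_d|$ has $c(F)$ of order at least $d$, which swamps $\sqrt{\log(d+1)/d}$.

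Your fallback $|f(x)-x^{2d+2}f(1/x)|\le 2|f(x)|$ on $\mathbb{S}^1$ can be made rigorous, but it only yields $\log m\le\log 2+\log M(f)$. So you would need a lift $f=1+Ax+Ex^2$, with $A\equiv a$ and $E\equiv b-1\pmod m$, satisfying $M(f)<m/2$. This fails for many admissible triples, not just small $m$:
\begin{itemize}
\item for $m=3$, $(a,b)=(1,0)$, every lift has $M(f)\ge(1+\sqrt5)/2>3/2$;
\item for $m=5$, $(a,b)=(2,4)$, the best lift has $M(f)=1+\sqrt3>5/2$;
\item for every even $m\ge4$ with $b=m/2+1$, one has $|E|\ge m/2$, hence $M(f)\ge|E|\ge m/2$ for all lifts.
\end{itemize}
The factor $2$ is a structural loss, not a small-case nuisance.

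The missing idea is to equidistribute $\alpha$ and $\alpha^{\deg P}$ \emph{jointly}. Assuming the infimum is $1$, the paper proceeds as follows.
\begin{itemize}
\item It takes $P_k$ with $M(P_k)\to1$ and $n_k=\deg P_k\to\infty$, and sets $\xi_k=(\alpha_k,\alpha_k^{n_k})$.
\item It checks that $h(\xi_k)\to0$ and that the sequence is strict: a relation $\alpha_k^{r+sn_k}=1$ forces $r+sn_k=0$.
\item It applies Theorem~\ref{bilu} with $N=2$ to truncations $\max\{\Psi,-T\}$ of a normalized $\Psi=\log|F(z,w)|-2\log^+|z|-\log^+|w|-\mathrm{const}\le 0$, followed by monotone convergence in $T$. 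Here $F(z,w)=wG(z)-G^*(z)$ with $G(x)=x^2+Ax+E$, so $G^*=f$ in your notation.
\end{itemize}
The resultant bound then becomes $\log m\le\mathscr{M}(F)$. Jensen's formula in $w$, together with $|G^*|=|G|$ on $\mathbb{S}^1$, gives $\mathscr{M}(F)=\log M(G)$ exactly. In other words, the factor $|w-G^*(z)/G(z)|$ that you bound by $2$ has logarithmic mean $0$ over $w\in\mathbb{S}^1$. Hence only $M(G)<m$ is needed, and centered representatives always give it: by Parseval, $M(G)\le\sqrt{1+A^2+E^2}\le\sqrt{1+m^2/2}<m$.

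Your guess that the relevant quantity is $\log M(F_d)$ is consistent with this, since $\log M(F_d)\to\log M(G)$ by Boyd--Lawton. But proving that the root averages track it is precisely this two-dimensional step. Your Case~2 is also left open; the paper avoids it by choosing, among at most two lifts with $M(G)<m$, one whose comparison polynomial $Q_{d,A,E}$ differs from $P$. No Voutier or D'Andrea input is needed for the qualitative statement.

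Finally, your sketch does not reach $c_{5,4,2}\ge(976/975)^{1/3}$. Here the lift is $f=1-x+x^2$, so $F_d=(1-x+x^2)(1-x^{2d})$, whose sup norm on $\mathbb{S}^1$ tends to $6>5$. The single-lift inequality $5^{2d}\le\|F_d\|_\infty^{2d}M(P)^{2d+2}$ therefore gives nothing, and the remark about $(1+x)(1-x^2)P$ and the case ``$d=1$'' is not an argument.

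The paper instead adds a second lift $V(x)-x^{n}V^*(x)$, with $V=-3-2x+2x^2\equiv 2(1-x+x^2)\pmod 5$ and $n=\deg P$. It shows both resultants are nonzero; for the second lift, comparing the coefficients of $x$ and $x^{n+1}$ would force $6a_1=4$. It then proves that the product of the two lifts has sup norm at most $25\cdot\frac{975}{976}$ on $\mathbb{S}^1$. This gives $M(P)^{2n+4}\ge(976/975)^{n}$, i.e.\ exponent $1/3$ at $n=4$, with $n=2$ checked directly.
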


The following theorem provides infinitely many examples in every even degree and supports Theorem \ref{thm9.1}.

\begin{theorem}\label{thm:general-irreducibility}
Let $d\geq 2$ be an even integer and let $A\geq 4$ be an integer. Define
$$
P_{d,A}(x)=x^d+Ax^{d-1}+\cdots+Ax+1.
$$
Then the following statements hold:
\begin{enumerate}
\item $P_{d,A}$ is irreducible over $\mathbb{Q}$ and non-cyclotomic. Moreover, exactly $d-2$ of its roots lie on the unit circle, while the remaining two roots are real, negative, and reciprocal; one lies outside the unit circle and the other lies inside it.

\item We have $A-1<M(P_{d,A})<A.$
Furthermore,
$$
\lim_{\substack{d\to\infty, d \text{ even}}}M(P_{d,A})=A-1 \mbox{ and  consequently, }
\inf_{\substack{d\geq 2, d \text{ even}}}M(P_{d,A})=A-1.
$$
\end{enumerate}
\end{theorem}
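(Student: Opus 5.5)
Multiplying by $x-1$ turns $P_{d,A}$ into a four-term polynomial, and the plan is to read everything off from that. Since $(x-1)(x^{d-1}+\cdots+x)=x^d-x$, we have
$$
(x-1)P_{d,A}(x)=x^{d+1}+(A-1)x^d-(A-1)x-1=:F(x).
$$
Restricting to the unit circle $x=e^{i\theta}$ and dividing by $x^{(d+1)/2}$ gives
$$
x^{-(d+1)/2}F(x)=2\cos\big(\tfrac{d+1}{2}\theta\big)+2(A-1)\cos\big(\tfrac{d-1}{2}\theta\big)\cdot\big(\text{sign adjustment}\big).
$$
More cleanly, $F(x)=x^d(x+A-1)-(A-1)x-1$, so on $|x|=1$ the zeros of $F$ are the solutions of
$$
x^d=\frac{(A-1)x+1}{x+A-1}.
$$
The right-hand side is a Blaschke-type Möbius map $B$ sending the unit circle to itself. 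Its argument winds once as $x$ winds once, while $x^d$ winds $d$ times. The argument principle (counting solutions of $\arg x^d-\arg B(x)\in 2\pi\mathbb Z$) then gives about $d-1$ unimodular roots of $F$, one of which is $x=1$. This leaves $d-2$ unimodular roots of $P_{d,A}$, assuming we verify that they are simple and that $x=-1$ is not a root. We have $P(-1)=2-A\neq0$ because $d$ is even and $A\ge4$.

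The real roots come next. Since $P(0)=1>0$ and $P(-1)=2-A<0$, there is a root in $(-1,0)$. By reciprocity its inverse is a root in $(-\infty,-1)$. Then $P$ has exactly one root $\beta$ with $|\beta|>1$. Evaluating $F(-(A-1))=-(A-1)^{d+1}\cdot 0+\cdots$ directly, $F(-(A-1))=(A-1)^2-1>0$ and $F(-A)=-A^d+(A-1)A-1<0$, so $\beta\in(-A,-(A-1))$. This gives $A-1<M(P)=|\beta|<A$. Moreover, from $\beta^d(\beta+A-1)=(A-1)\beta+1$ we get $|\beta+A-1|\le A\,|\beta|^{-d}\to0$, which yields the limit $A-1$. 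Because $M(P)>A-1$ for every $d$, the infimum equals $A-1$.

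For irreducibility, the plan is the standard Salem-type argument, combined with a check that there are no cyclotomic factors. Any factorization $P=GH$ puts $\beta$ in one factor, say $G$, so $H$ has all its roots on or inside the unit circle. Since $H(0)=\pm1$ forces $\prod|\text{roots}|=1$, all roots of $H$ lie on the circle, and Kronecker makes $H$ a product of cyclotomic polynomials. It therefore suffices to show that $P$ has no root of unity $\zeta$ as a root. Such a $\zeta$ would satisfy $\zeta^d(\zeta+A-1)=(A-1)\zeta+1$, i.e. $\zeta^d=B(\zeta)$. From this I would derive a contradiction, for instance by taking absolute values after applying Galois conjugates. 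The cleaner route is to note that $\zeta^{d+1}-1=-(A-1)\zeta(\zeta^{d-1}-1)$. Taking norms/absolute values and using $|\zeta^{k}-1|\le 2$, this forces $(A-1)|\zeta^{d-1}-1|\le2$ for every conjugate. Since the product of $|\sigma(\zeta^{d-1})-1|$ over all conjugates is an integer $\ge1$ unless $\zeta^{d-1}=1$, we get a contradiction for $A\ge4$. When $\zeta^{d-1}=1$, we get $\zeta^{d+1}=1$, so $\zeta^2=1$, which is excluded. Non-cyclotomicity is immediate from $M(P)>1$. Finally, exactly two roots are off the circle: any non-real root $\alpha$ with $|\alpha|\neq1$ would bring $\bar\alpha$ and $1/\alpha$, giving a second root of modulus $>1$.

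The main obstacle I expect is making the unimodular root count rigorous, including simplicity. The count should in fact follow from the off-circle analysis just given, together with the fact that roots are simple since $P$ is irreducible. This sidesteps the winding-number computation.
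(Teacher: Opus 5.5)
There is a genuine gap, at the point you flagged as the ``main obstacle.'' The sentence ``Then $P$ has exactly one root $\beta$ with $|\beta|>1$'' does not follow from what precedes it. The sign changes $P(0)>0>P(-1)$ and $F(-(A-1))>0>F(-A)$ only show that \emph{at least} one root lies outside the unit disk. Everything downstream needs uniqueness:
\begin{itemize}
\item identifying the root in $(-A,-(A-1))$ with the inverse of the root in $(-1,0)$;
\item the formula $M(P_{d,A})=|\beta|$;
\item the Salem-type step ``$H$ has all its roots on or inside the unit circle'';
\item your final count of off-circle roots. The remark that a non-real $\alpha$ would bring $\bar\alpha$ and $1/\alpha$ only rules out a \emph{second} outer root, which presupposes you already know there is exactly one.
\end{itemize}
Your proposed bypass of the winding count is circular. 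You want the $d-2$ unimodular roots to follow from ``the off-circle analysis'' plus simplicity from irreducibility. But the off-circle analysis is the unproved uniqueness claim, and your irreducibility argument itself uses it.

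The missing step is cheap with what you already have. Write $B(e^{it})=e^{i\theta(t)}$ with a continuous argument, $\theta(0)=0$ (note $B(1)=1$) and $\theta(2\pi)=2\pi$. Put $h(t)=dt-\theta(t)$, so $h(0)=0$ and $h(2\pi)=2\pi(d-1)$. By the intermediate value theorem alone, each equation $h(t)=2\pi k$ with $0\le k\le d-2$ has a solution in $[0,2\pi)$, and different $k$ give different $t$. Hence $F$ has at least $d-1$ distinct zeros on $|z|=1$, one of which is $z=1$. Add your two real zeros in $(-1,0)$ and $(-A,-(A-1))$: that is at least $d+1=\deg F$ distinct zeros. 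So these are all the zeros, and all are simple. This gives at once:
\begin{itemize}
\item the $d-2$ unimodular roots of $P_{d,A}$;
\item uniqueness of the outer root, hence $M(P_{d,A})=|\beta|\in(A-1,A)$;
\item the hypothesis your Kronecker argument needs.
\end{itemize}
The paper instead shows $h$ is strictly increasing via $\theta'(t)\le (c+1)/(c-1)\le 2\le d$ with $c=A-1$, handling the equality case $(d,c)=(2,3)$ separately. That yields exactly $d-1$ unimodular zeros of $F$ directly, and either route works.

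The rest of your argument is sound. Your norm argument excluding roots of unity, via $|\sigma(\zeta)^{d-1}-1|\le 2/(A-1)<1$ for every conjugate, is a valid and more self-contained substitute for the paper's Lemma~\ref{lem:root-unity}. One small slip: from $\beta^d(\beta+A-1)=(A-1)\beta+1$ you get $|\beta+A-1|=((A-1)|\beta|-1)|\beta|^{-d}<A^2(A-1)^{-d}$. The bound $A|\beta|^{-d}$ is false; for $d=2$, $A=4$ it gives about $0.29$ against the true value $\sqrt3-1\approx0.73$. The limit is unaffected.
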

Finally, for integers $A\ge4$ and $B\ge1$ with $A>B$, we consider the polynomials
$$
P_{d,A,B}(x)=x^{2d}+Ax^{2d-1}+Bx^{2d-2}+\cdots+Bx^2+Ax+1,
$$
whose odd and even inner coefficients lie in the residue classes $A$ and $B$ modulo any $m$ with $A\not\equiv B\pmod m$; they therefore belong to $\mathscr{P}_{m,a,b}$ with $a\equiv A$, $b\equiv B\pmod m$ whenever they are irreducible and non-cyclotomic. The next theorem provides a family of such examples. 
\begin{theorem}\label{thm:support-abmodm}
Let   $d\geq 2$ be an even integer. Let $A, B\in \mathbb{Z}$ with $A>B$ such that $A\geq 4 , B\geq 1$ and $\gcd(A-1, B-1) \geq 3$. Then we prove the following. 
\begin{enumerate} 
\item $P_{d, A, B}(x) \in \mathbb{Z}[x]$, as defined above, is irreducible over $\mathbb{Q}$ and non-cyclotomic. Moreover, exactly $2d-2$ roots lie on the unit circle, and the remaining two roots lie off the unit circle; they are real, negative and reciprocal to each other. 
\item We have $A-1 < M(P_{d, A, B}) < A$. Moreover, if $\displaystyle R_{A,B} = \frac{A+\sqrt{A^2-4(B-1)}}{2}$, then 
$$  \lim_{\substack{d\to\infty, \ d \ \text{even}}} M(P_{d, A, B}) = R_{A, B} \mbox{ and consequently, } A-1 < R_{A,B} \le A
$$ 
and $R_{A,1} = A$. 
\end{enumerate}
\end{theorem}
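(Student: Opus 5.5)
The plan is to start from a closed form for $P_{d,A,B}$. Summing the two geometric progressions of inner coefficients gives
$$
(x^2-1)P_{d,A,B}(x)=x^{2d}Q(x)-Q^*(x),\qquad Q(x)=x^2+Ax+(B-1),\quad Q^*(x)=x^2Q(1/x)=(B-1)x^2+Ax+1 .
$$
Since $Q(-1)=B-A<0$ and $Q(x)\to+\infty$ as $x\to-\infty$, $Q$ has two negative real roots: $-R_{A,B}$ with $R_{A,B}>1$, and $-s$ with $0<s<1$. Moreover
$$
Q(-(A-1))=B-A<0,\qquad Q(-A)=B-1\ge 0,
$$
so $A-1<R_{A,B}\le A$. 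If $B=1$ then $Q(x)=x(x+A)$, hence $R_{A,1}=A$.

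\emph{Roots.} First I locate the roots. Sign checks give $P(1)>0$ and
$$
P(-1)=2-dA+(d-1)B=2-A-(d-1)(A-B)<0,
$$
while $P(x)\to+\infty$ as $x\to-\infty$. Hence $P$ has a real root $-\beta<-1$, and by reciprocity also $-1/\beta\in(-1,0)$. For the unit circle I use the Salem--Boyd argument. On $|x|=1$ we have $|Q^*(x)|=|Q(x)|$, so $f(x)=x^{2d}Q(x)/Q^*(x)$ is unimodular there. Inside the disk $f$ has $2d+1$ zeros (from $x^{2d}$ and $-s$) and one pole (at $-1/R_{A,B}$), so its argument increases by $2\pi\cdot 2d$ around the circle. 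Therefore $f(e^{i\theta})=1$ has at least $2d$ solutions, and these are roots of $(x^2-1)P$. Discarding $x=\pm1$ leaves at least $2d-2$ roots of $P$ on the circle. Adding the two real roots accounts for all $2d$ roots, which forces exactly $2d-2$ unimodular roots (with multiplicity). The root outside the disk is $-\beta$, so $M(P)=\beta$.

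\emph{Bounds and limit.} For the bounds on $M(P)$ I evaluate $P$ at $-(A-1)$ and at $-A$ via the identity above. At $x=-A$ the identity gives
$$
(A^2-1)P(-A)=A^{2d}(B-1)-(B-1)A^2>0\quad\text{if }B>1,
$$
and for $B=1$ I compute directly. A similar check at $x=-(A-1)$ should show a sign change, placing $\beta$ in $(A-1,A)$. For the limit, on any compact set outside the closed unit disk the term $x^{2d}Q$ dominates $Q^*$. Rouché's theorem on a small circle around $-R_{A,B}$ then gives $\beta\to R_{A,B}$ as $d\to\infty$.

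\emph{Irreducibility.} By the root count, $P$ equals the minimal polynomial of $-\beta$ times a product of cyclotomic factors. This holds because every irreducible factor other than the one containing $\pm\beta^{\pm1}$ has all its roots on the unit circle, and is therefore cyclotomic by Kronecker. So it suffices to show that no root of unity $\zeta$ is a root of $P$; this step is the main obstacle. Write
$$
P=\frac{x^{2d+1}-1}{x-1}+(A-1)\,x\,\frac{x^{2d}-1}{x^2-1}+(B-1)\,x^2\,\frac{x^{2d-2}-1}{x^2-1}.
$$
Reducing modulo a prime $p\mid\gcd(A-1,B-1)$ (or modulo $g=\gcd(A-1,B-1)$ itself) shows that $\Phi_n\mid P$ forces $\Phi_n$ to divide $1+x+\cdots+x^{2d}$ modulo $p$. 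I then want to conclude $\zeta^{2d+1}=1$; handling the $p$-part of $n$, and the case where $p=2$ is the only prime divisor of $g$, is where care is needed. Once $\zeta^{2d+1}=1$, substituting $\zeta^{2d}=\zeta^{-1}$ and dividing by $1-\zeta$ gives
$$
(B-1)(1+2\cos\theta)=-(A-1),\qquad \zeta=e^{i\theta}.
$$
If $B=1$ this is impossible. If $B>1$, then $1+2\cos\theta\ge-1$ forces $A-1\le B-1$, contradicting $A>B$. So $P$ has no cyclotomic factor, and is therefore irreducible and non-cyclotomic.

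If the reduction to $\zeta^{2d+1}=1$ fails, I will instead work directly with the equation $f(\zeta)=1$ for every Galois conjugate of $\zeta$, i.e. $\zeta^{2d}Q(\zeta)=Q^*(\zeta)$. The aim there is to bound $n$ using the argument count above and then rule out the small $n$ individually.
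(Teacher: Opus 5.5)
\textbf{There is a genuine gap in the irreducibility step.} Most of your plan is sound, and parts are cleaner than the paper's. The winding-number count gives at least $2d$ unimodular roots of $(x^2-1)P_{d,A,B}$. Together with the real root below $-1$ coming from $P(-1)<0$ and the degree count, this avoids the paper's strict-monotonicity estimate for the argument. Rouch\'e's theorem near $-R_{A,B}$ is a valid substitute for the paper's accumulation-point argument for the limit. There is a small arithmetic slip: $Q^*(-A)=(B-2)A^2+1$, so $(A^2-1)P(-A)=A^{2d}(B-1)-(B-2)A^2-1$, not $A^{2d}(B-1)-(B-1)A^2$; positivity still holds.

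The gap is the step you call the main obstacle: showing that no root of unity $\zeta$ is a root of $P_{d,A,B}$. This is left unproved, and it is the only place the hypothesis $\gcd(A-1,B-1)\ge 3$ is used. Without that hypothesis the statement is false. For $B=2$ one has $Q=Q^*$, hence $(x^2-1)P_{d,A,2}=(x^{2d}-1)Q$, and every $2d$-th root of unity other than $\pm1$ is a root. All of your analytic structure (the Blaschke quotient, the argument count) persists in that example. So your fallback, which bounds $n$ using only the argument count, cannot succeed as stated.

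The missing idea is to work modulo the full $g=\gcd(A-1,B-1)$ rather than a prime, and to clear the denominator. From $P\equiv 1+x+\cdots+x^{2d}\pmod g$ you get $1+\zeta+\cdots+\zeta^{2d}\in g\,\mathcal{O}_{\mathbb{Q}(\zeta)}$. Multiplying by $\zeta-1$ gives $1-\zeta^{2d+1}\in g\,\mathcal{O}_{\mathbb{Q}(\zeta)}$. A norm argument (Lemma~\ref{lem:root-unity}) then finishes. Suppose $\xi=\zeta^{2d+1}\neq 1$ has order $n$, and let $e=[\mathbb{Q}(\zeta):\mathbb{Q}(\xi)]$. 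Taking norms gives $g^{e\varphi(n)}\mid \Phi_n(1)^e$ with $\Phi_n(1)\in\{1,p\}$, which forces $g=2$, contradicting $g\ge 3$. Hence $\zeta^{2d+1}=1$, and your final computation $(B-1)(1+2\cos\theta)=-(A-1)$ applies.

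Reduction modulo a single prime can be completed for odd $p$, by comparing multiplicities in $\mathbb{F}_p[x]$. It fails when $g$ is a power of $2$: modulo $2$ you only get $n\mid 2(2d+1)$, and the case $\zeta^{2d+1}=-1$ is excluded only by using $4\mid g$. In that case $-2\in g\,\mathcal{O}$, so $2/g$ would be an algebraic integer.
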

\section{Preliminaries}

For a polynomial $\displaystyle F(x)=\sum_{j=0}^{m}c_jx^j$, the length of $F$ is defined as  
$$
\displaystyle L(F) :=\sum_{j=0}^{m}|c_j|\mbox{ and }\displaystyle\|F\|_{\infty}:=\max_{|z|=1}|F(z)|.
$$

\begin{lemma} (See Lemma 3.2 in \cite{borw})
\label{lem:length-limit}
For every nonzero polynomial $F\in\mathbb{C}[x]$, $$\lim_{k\to\infty}L(F^k)^{1/k}=\|F\|_{\infty}.$$
\end{lemma}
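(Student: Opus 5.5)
The plan is to prove the two inequalities $\limsup_k L(F^k)^{1/k}\le \|F\|_\infty$ and $\liminf_k L(F^k)^{1/k}\ge \|F\|_\infty$ separately, by comparing the length of a polynomial with its sup norm on the unit circle. Let $n=\deg F$.

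\textbf{Upper bound.} For any polynomial $G(x)=\sum_{j=0}^{N} g_j x^j$, the coefficients are Fourier coefficients,
$$g_j=\frac{1}{2\pi}\int_0^{2\pi} G(e^{i\theta})e^{-ij\theta}\,d\theta ,$$
so $|g_j|\le \|G\|_\infty$ and hence $L(G)\le (N+1)\|G\|_\infty$. Apply this to $G=F^k$, which has degree $kn$ and satisfies $\|F^k\|_\infty=\|F\|_\infty^k$ because $|F^k(z)|=|F(z)|^k$ pointwise. This gives $L(F^k)\le (kn+1)\|F\|_\infty^k$. Taking $k$-th roots and using $(kn+1)^{1/k}\to 1$ yields $\limsup_k L(F^k)^{1/k}\le\|F\|_\infty$.

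\textbf{Lower bound.} For $|z|=1$ the triangle inequality gives $|G(z)|\le\sum_j|g_j|=L(G)$, so $\|G\|_\infty\le L(G)$. With $G=F^k$ this becomes $\|F\|_\infty^k\le L(F^k)$, that is, $L(F^k)^{1/k}\ge\|F\|_\infty$ for every $k$. Together with the upper bound, the limit exists and equals $\|F\|_\infty$. Since $F\neq 0$, we have $\|F\|_\infty>0$, so there are no degenerate cases.

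There is no real obstacle here. The only point needing care is that the polynomial factor $kn+1$ in the upper bound grows only linearly in $k$, so its $k$-th root tends to $1$. The key structural fact is the multiplicativity $\|F^k\|_\infty=\|F\|_\infty^k$, which fails for $L$ itself and is exactly why the limit is needed.
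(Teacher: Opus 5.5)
Your argument is correct and complete. The sandwich $\|F\|_\infty^k=\|F^k\|_\infty\le L(F^k)\le (kn+1)\|F\|_\infty^k$ is the standard proof. The paper gives no proof of its own and simply cites Lemma~3.2 of Borwein et al.; as far as I recall, that lemma rests on the same two-sided comparison between the length and the sup norm on the unit circle. Any polynomially growing factor in the upper bound, whether your $kn+1$ or a $\sqrt{kn+1}$ coming from Cauchy--Schwarz and Parseval, disappears after taking $k$-th roots.
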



Using Lemma \ref{lem:length-limit}, we can prove the following.
\begin{lemma}
\label{lem:resultant-mahler} Let $Q\in\mathbb{Z}[x]$ be a monic polynomial of degree $n$, and let $H\in\mathbb{C}[x]$ be a polynomial of degree $h$. Then 
$$
\left|\operatorname{Res}(Q,H)\right|\le \|H\|_{\infty}^{n}M(Q)^{h},
$$
where $\operatorname{Res}(Q, H)$ denotes the resultant of $Q$ and $H$. 
\end{lemma}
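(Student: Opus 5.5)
The plan is to write the resultant as a product over the roots of $Q$, bound each factor, and then use Lemma~\ref{lem:length-limit} to replace the crude coefficient bound $L(H)$ by the sup norm $\|H\|_{\infty}$.

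\emph{Step 1: the resultant as a product.} Let $\alpha_1,\dots,\alpha_n$ be the roots of $Q$, counted with multiplicity. Since $Q$ is monic, the standard formula gives
$$\operatorname{Res}(Q,H)=\prod_{i=1}^{n}H(\alpha_i).$$

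\emph{Step 2: a crude bound for one factor.} Write $H(x)=\sum_{j=0}^{h}b_jx^j$. For each $i$,
$$|H(\alpha_i)|\le \sum_{j=0}^{h}|b_j|\,|\alpha_i|^j\le L(H)\max\{1,|\alpha_i|\}^{h}.$$
Taking the product over $i$ yields
$$|\operatorname{Res}(Q,H)|\le L(H)^n M(Q)^h.$$
This is the desired inequality, except that $L(H)$ appears in place of $\|H\|_\infty$.

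\emph{Step 3: the power trick.} Apply Step 2 to $H^k$ for each $k\ge1$. Since $\deg H^k=kh$, and the product formula from Step 1 is multiplicative in $H$, we have $\operatorname{Res}(Q,H^k)=\operatorname{Res}(Q,H)^k$. Therefore
$$|\operatorname{Res}(Q,H)|^k\le L(H^k)^n M(Q)^{kh}.$$
Taking $k$-th roots gives
$$|\operatorname{Res}(Q,H)|\le \big(L(H^k)^{1/k}\big)^n M(Q)^h.$$
Letting $k\to\infty$ and invoking Lemma~\ref{lem:length-limit} replaces $L(H^k)^{1/k}$ by $\|H\|_\infty$, which gives the claim.

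\emph{Degenerate cases.} If $H=0$, both sides vanish. If $H$ is a nonzero constant $b$, then $h=0$ and $\operatorname{Res}(Q,H)=b^n$; the inequality $|b|^n\le\|H\|_\infty^n$ holds with equality, consistent with Step 3.

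\emph{Expected obstacle.} The only delicate point is the normalization of the resultant, that is, confirming the convention under which $\operatorname{Res}(Q,H)=\prod_i H(\alpha_i)$ holds exactly. With $Q$ monic, the leading-coefficient factor is $1$, and a sign $\pm1$ arising from the order of the arguments does not affect absolute values. Everything else is routine.
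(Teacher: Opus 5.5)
Your proposal is correct and follows the paper's proof: you write $\operatorname{Res}(Q,H)=\prod_i H(\alpha_i)$, bound $|H^k(\alpha_i)|\le L(H^k)\max\{1,|\alpha_i|\}^{kh}$, take $k$-th roots, and let $k\to\infty$ using Lemma~\ref{lem:length-limit}. Your remarks on the degenerate cases and on the resultant normalization for monic $Q$ are harmless additions.
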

\begin{proof}
Let $\alpha_1,\ldots,\alpha_n$ be the roots of $Q$. For each integer $k\geq1$, the elementary length estimate gives
$$
|\operatorname{Res}(Q,H)|^k=\prod_{j=1}^n|H^k(\alpha_j)|
\leq L(H^k)^n M(Q)^{kh}.
$$
Taking $k$-th roots and then letting $k\to\infty$ proves the assertion by Lemma~\ref{lem:length-limit}.
\end{proof}
For a real number $u$ with $|u|<1$, define {\it the Blaschke factor} 
$$b_u(z)=\frac{z-u}{1-uz}.$$ 
Note that whenever $|z|=1$, we have $|b_u(z)|=1.$

\begin{lemma}\label{lem:blaschke-derivative}
Let $|u|<1$ be a real number, and choose a continuous argument $b_u(e^{it})=e^{i\theta_u(t)}.$ Then 
$$\theta_u'(t) = \frac{1-u^2}{1-2u\cos t+u^2}.
$$ 
Moreover, $\theta_u(2\pi)-\theta_u(0)=2\pi.$
\end{lemma}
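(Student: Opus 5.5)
The plan is a direct computation of the derivative, then an integration for the total increment. Write $z=e^{it}$. Because $\theta_u$ is a continuous argument of $b_u(e^{it})$ and $b_u$ has no zeros or poles on the unit circle (since $|u|<1$), we have $i\theta_u(t)=\log b_u(e^{it})$ for a locally defined branch of the logarithm. Differentiating gives
$$
i\theta_u'(t)=\frac{d}{dt}\log b_u(e^{it})=ie^{it}\,\frac{b_u'(e^{it})}{b_u(e^{it})}.
$$
Hence $\theta_u'(t)=z\,b_u'(z)/b_u(z)$ evaluated at $z=e^{it}$. This quantity is automatically real, because $|b_u(e^{it})|=1$ forces $\log|b_u(e^{it})|$ to be constant, so its derivative vanishes.

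Next I compute the logarithmic derivative:
$$
\frac{b_u'(z)}{b_u(z)}=\frac{1}{z-u}+\frac{u}{1-uz}.
$$
Multiplying by $z$ and putting everything over a common denominator gives
$$
\frac{z(1-uz)+uz(z-u)}{(z-u)(1-uz)}=\frac{z(1-u^2)}{(z-u)(1-uz)}.
$$
For $z=e^{it}$ the denominator is
$$
(z-u)(1-uz)=z(1-u\bar z)(1-uz),
$$
using $z-u=z(1-u\bar z)$ when $|z|=1$. Since $u$ is real, the product $(1-u\bar z)(1-uz)$ equals $|1-uz|^2=1-2u\cos t+u^2$. The factor $z$ cancels, which yields
$$
\theta_u'(t)=\frac{1-u^2}{1-2u\cos t+u^2}.
$$

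For the total increment I integrate: $\theta_u(2\pi)-\theta_u(0)=\int_0^{2\pi}\theta_u'(t)\,dt$. The integrand is exactly the Poisson kernel $P_u(t)$, and its integral over $[0,2\pi]$ is $2\pi$. Alternatively, by the argument principle the increment equals $2\pi$ times the number of zeros minus poles of $b_u$ inside the unit disk. There is one zero, at $u$, and no pole, since the pole $1/u$ lies outside the disk. Either route gives $2\pi$.

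The only mildly delicate point is justifying differentiation of the continuous argument. I would handle this by noting that $\theta_u$ locally agrees with $\operatorname{Im}\log b_u(e^{it})$ up to a constant, so it is smooth with the stated derivative.
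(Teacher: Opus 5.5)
Your proof is correct and follows essentially the same route as the paper: logarithmic differentiation of $b_u(e^{it})$, simplification to $\frac{1-u^2}{1-2u\cos t+u^2}$ (your observation that $z b_u'(z)/b_u(z)$ is already real on $|z|=1$ plays the role of the paper's ``take imaginary parts''), and the argument principle with one zero and no pole in the disk giving the increment $2\pi$. Your additional Poisson-kernel integration is an equally valid way to get the $2\pi$.
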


\begin{proof}
A direct logarithmic differentiation implies 
$$\frac{d}{dt}\log b_u(e^{it}) = \frac{i e^{it}}{e^{it}-u} + \frac{i u e^{it}}{1-u e^{it}}.$$ Taking imaginary parts and simplifying yields $$\theta_u'(t) = \frac{1-u^2}{1-2u\cos t+u^2}.$$ The factor $b_u$ has one zero and no pole in the open unit disk, and hence its restriction to the unit circle has winding number $1$.  Hence its argument increases by $2\pi$.
\end{proof}


\begin{proposition}
\label{prop:root-distribution}
Let $N\geq 2$ be an even integer and $A\geq 4$ be an integer. 
The polynomial $P_{N,A} = x^N + Ax^{N-1} +Ax^{N-2}+\cdots+Ax+1$ has exactly $N-2$ roots on the unit circle and exactly two roots lie off the unit circle. These two roots are real, negative, and reciprocal to each other.
\end{proposition}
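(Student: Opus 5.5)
The plan is to multiply $P_{N,A}$ by $x-1$, which turns the root count on the unit circle into a fixed-point problem for a Blaschke factor. Lemma~\ref{lem:blaschke-derivative} then gives an exact count. The two remaining roots are found by an intermediate value argument.

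\emph{Reduction to $z^N=b_u(z)$.} Since $x^{N-1}+\cdots+x = x\,(x^{N-1}-1)/(x-1)$, we have
$$
F(x):=(x-1)P_{N,A}(x)=x^{N+1}+(A-1)x^N-(A-1)x-1 .
$$
Set $u=-1/(A-1)$. Then $|u|\le 1/3<1$, and
$$
F(z)=(A-1)\bigl[z^N(1-uz)-(z-u)\bigr].
$$
For $|z|=1$ we have $1-uz\neq 0$, so $F(z)=0$ if and only if $z^N=b_u(z)$.

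\emph{Counting roots on the circle.} Write $z=e^{it}$ and let $g(t)=Nt-\theta_u(t)$. The roots of $F$ on the unit circle correspond exactly to the $t\in[0,2\pi)$ with $g(t)\in 2\pi\mathbb{Z}$. By Lemma~\ref{lem:blaschke-derivative},
$$
\theta_u'(t)=\frac{1-u^2}{1-2u\cos t+u^2}\le \frac{1+|u|}{1-|u|}=\frac{A}{A-2}\le 2\le N,
$$
using $A\ge4$. Hence $g'(t)\ge 0$. Equality in the bound on $\theta_u'$ can occur only at the single point $t=\pi$, and only when $A=4$ and $N=2$, so $g$ is strictly increasing on $[0,2\pi]$.

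By the same lemma, $g(2\pi)-g(0)=2\pi N-2\pi=2\pi(N-1)$. Since $g(0)=0$, there are exactly $N-1$ values $t\in[0,2\pi)$ with $g(t)\in 2\pi\mathbb{Z}$, which gives $N-1$ distinct roots of $F$ on the circle. One of them is $t=0$, i.e.\ $z=1$, the factor we introduced.

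This is the step that needs the most care: one must make sure the count really counts roots with multiplicity. Where $g'>0$ the roots are simple, since the multiplicity is detected by the vanishing of $g'$. At the degenerate point $z=-1$ we have $F(-1)=2A-4\neq 0$, so no root occurs there. Also, $z=1$ is a simple root of $F$, because $P_{N,A}(1)=2+A(N-1)\neq 0$. Hence $P_{N,A}$ has exactly $N-2$ roots on the unit circle, all simple.

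\emph{The two off-circle roots.} Since $P_{N,A}$ has degree $N$, exactly two roots remain, and neither lies on the circle. We have $P_{N,A}(0)=1>0$. At $x=-1$, the inner terms contribute $A\sum_{j=1}^{N-1}(-1)^j=-A$ because $N-1$ is odd, so
$$
P_{N,A}(-1)=1+1-A=2-A<0 .
$$
Hence there is a real root $\beta\in(-1,0)$. Because $P_{N,A}$ is reciprocal, $1/\beta\in(-\infty,-1)$ is also a root. These account for the two remaining roots, which are real, negative, reciprocal to each other, and off the unit circle, as claimed.
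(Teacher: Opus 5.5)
Your proof is correct and follows essentially the paper's route: the same factor $x-1$, the same Blaschke factor (the paper's $B(z)=(z+c^{-1})/(1+c^{-1}z)$ with $c=A-1$ is your $b_u$ with $u=-1/c$), the same bound $\theta_u'\le A/(A-2)\le 2\le N$, and the same count of $N-1$ roots of $(x-1)P_{N,A}$ on the circle. The differences are minor. You locate the real root via $P_{N,A}(0)=1>0>2-A=P_{N,A}(-1)$, whereas the paper uses $F_{N,c}(-c)=c^2-1>0$ with $F_{N,c}(x)\to-\infty$; the paper's version gives the sharper location $\rho<-c$, which it reuses later to bound $M(P_{d,A})$. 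You also obtain simplicity from $g'>0$, while the paper gets it for free from exhibiting $N+1$ distinct roots of a degree-$(N+1)$ polynomial.
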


\begin{proof}
Put $c = A-1$ and hence $c\geq 3$. Now consider 
$$
F_{N,c}(x) = (x-1) P_{N,A}(x) = x^{N+1} +cx^N-cx-1.
$$
Then the equation 
$$F_{N,c}(z)=0 \iff z^N(z+c)=cz+1,$$ 
and hence, for $z\neq -c$, 
$$
z^N=\frac{cz+1}{z+c}.
$$ 
For $|z|=1$, define 
$$
B(z)=\frac{cz+1}{z+c}=\frac{z+c^{-1}}{1+c^{-1}z}.
$$ 
Since $c^{-1}\in(0,1)$, this is a Blaschke factor. In particular, 
$$
|B(z)|=1 \mbox{ for all } |z|=1.
$$ 
Write 
$$
B(e^{it})=e^{i\beta(t)} \mbox{ with } \beta(0)=0,
$$ 
where $\beta$ is chosen continuously. Since $B$ has one zero at $z = -c^{-1}$ and no pole in the open unit disk, $\beta(2\pi)=2\pi.$ By differentiating it, we get 
\begin{equation}\label{eq:beta-derivative} 
\beta'(t)=\frac{c^2-1}{c^2+2c\cos t+1}.
 \end{equation} 
 Since $(c-1)^2 \leq c^2 + 2c\cos t+1\leq (c+1)^2$,  consequently, we obtain
 $$\frac{c-1}{c+1}\le \beta'(t)\le \frac{c+1}{c-1}.
 $$ 
 Since $c\ge 3$, we see that $\displaystyle\frac{c+1}{c-1}\le 2.$ By setting $H(t)=Nt-\beta(t)$, as $N\geq 2$,  we see that $H'(t)=N-\beta'(t)\ge 0.$ The above inequality is strict for every $t$, except for the case when $(N, c) = (2, 3)$. When $(N, c) = (2,3)$, we see that $H'(t)$ vanishes only at $t=\pi$ and hence $H$ is still strictly increasing on $[0,2\pi]$. Moreover, $H(0)=0 \mbox{ and } H(2\pi)=2\pi(N-1).$ Therefore, when $0\le t<2\pi$, each equation $$H(t)=2\pi k,\qquad k=0,1,\ldots,N-2,$$ has exactly one solution. Thus $F_{N,c}$ has exactly $N-1$ distinct roots on the unit circle.

Since $F_{N,c}(1)=0$, one of these roots is $1$. By the definition of $F_{N,c}(x)$, we conclude that the remaining $N-2$ roots, that are lying on the unit circle, are the roots of $P_{N,A}$. Since $N$ is even, 
\begin{equation}\label{eq:anti-reciprocal} 
x^{N+1}F_{N,c}(1/x)=-F_{N,c}(x). 
\end{equation} 
Hence the roots of $F_{N,c}$ occur in reciprocal pairs. Furthermore, $F_{N,c}(-c)=c^2-1>0,$ whereas 
$F_{N,c}(x)\longrightarrow -\infty \mbox{ as } x\to -\infty,$ because $N+1$ is odd. Hence $F_{N,c}$ has a real root $\rho$ satisfying $\rho<-c<-1.$ By \eqref{eq:anti-reciprocal}, $\rho^{-1}\in(-1/c,0)$ is also a root.

Now, we have accounted for $N-1+2 = N+1$ roots of $F_{N,c}$ which is a polynomial of degree $N+1$. Thus, by removing the root $1$,  we conclude that $P_{N,A}$ has $N-2$ roots on the unit circle and exactly two roots off the unit circle, which are namely $\rho$ and $\rho^{-1}$. This proves the proposition. 
\end{proof}

The following lemma is useful in proving the irreducibility. 

\begin{lemma}
\label{lem:root-unity}
Let $K$ be a number field, and $\mathcal{O}_K$ be the ring of integers of $K$.  If $1-\xi\in c\mathcal{O}_{K}$ for some root of unity $\xi \in \mathcal{O}_K$, and for some integer $c\geq 3$,  then $\xi=1$.
\end{lemma}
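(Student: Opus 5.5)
The plan is to pass to norms and use that a root of unity $\xi\neq 1$ makes $1-\xi$ have very small norm. Suppose $\xi\neq1$ and $1-\xi=c\gamma$ with $\gamma\in\mathcal{O}_K$; we aim for a contradiction. Let $n>1$ be the exact order of $\xi$ and work inside $L=\mathbb{Q}(\xi)\subseteq K$. Taking the norm $N_{K/\mathbb{Q}}$ of $1-\xi=c\gamma$ gives
$$
N_{K/\mathbb{Q}}(1-\xi)=c^{[K:\mathbb{Q}]}\,N_{K/\mathbb{Q}}(\gamma),
$$
and $N_{K/\mathbb{Q}}(\gamma)\in\mathbb{Z}\setminus\{0\}$, since $\gamma\neq0$ and $\gamma$ is an algebraic integer. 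Hence $c^{[K:\mathbb{Q}]}$ divides $N_{K/\mathbb{Q}}(1-\xi)=N_{L/\mathbb{Q}}(1-\xi)^{[K:L]}$.

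Next we compute $N_{L/\mathbb{Q}}(1-\xi)=\Phi_n(1)$, using the standard identity $\prod_{1<e\mid n}\Phi_e(1)=n$. This gives $\Phi_n(1)=p$ if $n=p^k$ is a prime power, and $\Phi_n(1)=1$ otherwise.

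In the case $\Phi_n(1)=1$, $1-\xi$ is a unit. Then $c^{[K:\mathbb{Q}]}\mid 1$, which is impossible since $c\geq3$.

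In the case $n=p^k$, we compare sizes. We need $c^{[K:\mathbb{Q}]}\mid p^{[K:L]}$, and in particular $c^{[K:\mathbb{Q}]}\le p^{[K:L]}$. Since $[K:\mathbb{Q}]=[K:L]\,\varphi(p^k)$, taking $[K:L]$-th roots gives $c^{\varphi(p^k)}\le p$. Also $\varphi(p^k)\ge p-1$, so $c^{p-1}\le p$. For $p=2$ this says $c\le2$; for $p\ge3$ we have $3^{p-1}>p$. Either way this contradicts $c\ge3$.

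The only point needing care is the case $p=2$, $n=2$, i.e.\ $\xi=-1$. There $1-\xi=2$, and $2\in c\mathcal{O}_K$ would force $2/c$ to be an algebraic integer. This is impossible because $2/c$ is a rational non-integer. The norm inequality above already covers this case, and this observation is just a direct check. The whole argument is therefore elementary, and the main step is the evaluation $\Phi_n(1)\in\{1,p\}$, which I will quote as standard.
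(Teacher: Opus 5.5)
Your proof is correct and follows the same route as the paper: take norms from $K$ through $L=\mathbb{Q}(\xi)$, use $N_{L/\mathbb{Q}}(1-\xi)=\Phi_n(1)\in\{1,p\}$, and rule out both cases. The only difference is cosmetic: in the prime-power case you compare sizes ($c^{\varphi(p^k)}\le p$), while the paper first notes $c=p^s$ and then compares $p$-adic valuations; both arguments close the case equally well.
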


\begin{proof}
Given that $\xi\in \mathcal{O}_K$ is a root of unity.  Suppose $\xi\neq 1$, and let $n>1$ be its order. Let $L=\mathbb{Q}(\xi) \subset K$ and let the degree be $e=[K:L]$.  Since $1-\xi\in c\mathcal{O}_{K}$, there exists $\alpha\in\mathcal{O}_{K}$ such that 
$$
1-\xi=c\alpha.
$$ 
By taking norms over $K/\mathbb{Q}$, we obtain 
$$
N_{K/\mathbb{Q}}(1-\xi)=c^{[K:\mathbb{Q}]}N_{K/\mathbb{Q}}(\alpha).
$$ 
Since $1-\xi\in L$, we get 
$$
N_{K/\mathbb{Q}}(1-\xi)=N_{L/\mathbb{Q}}(1-\xi)^e.
$$ 
Now 
$$
\bigl|N_{L/\mathbb{Q}}(1-\xi)\bigr|=\Phi_n(1),
$$ where $\Phi_n$ is the $n$th cyclotomic polynomial. Therefore 
\begin{equation}\label{eq:norm-divisibility}
c^{e\varphi(n)}\mid \Phi_n(1)^e.
\end{equation}
For $n>1$, one has
\[
\Phi_n(1)=
\begin{cases}
p, & n=p^r\text{ for some prime }p,\\
1, & n\text{ is not a prime power}.
\end{cases}
\]
If $n$ is not a prime power, then \eqref{eq:norm-divisibility} would imply $c^{e\varphi(n)}\mid 1$, which is not possible.  Thus $n=p^r$ for some prime $p$, and hence 
$$
c^{e\varphi(p^r)}\mid p^e.
$$ 
It follows that $c=p^s$ for some integer $s\ge 1$, and by comparison of $p$-adic valuations, we obtain
$$
s\varphi(p^r)\le 1.
$$ 
Hence $s=1$ and $\varphi(p^r)=1$. The only possibility is $$p=2,\qquad r=1,\qquad c=2,$$ contrary to $c\ge 3$. Thus we get $\xi=1$.
\end{proof}

The following lemma  is proved   in \cite{silver}.
\begin{lemma} \label{lem:complex} (See Proposition 2.1 in \cite{silver}) 
Let $J\geq1$ be an integer. For every complex number $z$ satisfying $|z|\leq1$, we have 
$$
\sum_{j=1}^J\left(1-\frac{j}{J+1}\right)\log|1-z^j|\leq\frac12\log\left(\frac J2+1\right)+\frac12.
$$
\end{lemma}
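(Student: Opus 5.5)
This inequality is a restatement of Proposition 2.1 in \cite{silver}, and the paper only cites it. If I were to prove it myself, I would expand each logarithm as a power series and use the fact that the weights $1-j/(J+1)$ form a Fej\'er kernel. The plan has three steps.

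\emph{Reduction to the boundary.} The left-hand side is $\log|G(z)|$ for the polynomial
$$
G(z)=\prod_{j=1}^J (1-z^j)^{1-j/(J+1)}.
$$
More precisely, it is a finite positive combination of the subharmonic functions $\log|1-z^j|$, so it is itself subharmonic on a neighbourhood of the closed disk. By the maximum principle it suffices to prove the bound for $|z|=1$. I would then work with $|z|=r<1$ and let $r\to1$ by upper semicontinuity, which avoids the singularities at roots of unity. These points give $-\infty$ anyway, so they are harmless.

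\emph{Fourier expansion on the circle.} Write $z=re^{i\theta}$ with $r<1$ and use
$$
\log|1-w|=-\operatorname{Re}\sum_{n\ge1}\frac{w^n}{n}.
$$
Collecting terms by $k=jn$, the sum becomes
$$
-\operatorname{Re}\sum_{k\ge1} r^k e^{ik\theta}\frac1k\sum_{j\mid k,\ j\le J}j\Bigl(1-\frac{j}{J+1}\Bigr).
$$
This form is awkward to bound directly. I would instead bound each term by its positive part. The trick is to write the function as a Fej\'er average:
$$
\sum_{j=1}^J\Bigl(1-\frac{j}{J+1}\Bigr)\log|1-z^j|=\frac1{J+1}\sum_{N=1}^{J}\sum_{j=1}^{N}\log|1-z^j|,
$$
which is just counting how many $N$ contain each $j$. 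Each partial sum $\sum_{j\le N}\log|1-z^j|$ is $\log$ of a product $\prod_{j\le N}|1-z^j|$. On average it behaves like $0$, since $\int\log|1-e^{i\theta}|=0$. What remains is to control its maximum using the smoothing.

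\emph{The key estimate.} This is the step I expect to be the main obstacle. I would use the Fej\'er-kernel identity
$$
\sum_{|n|\le J}\Bigl(1-\frac{|n|}{J+1}\Bigr)e^{in\theta}\ge0
$$
together with the Fourier expansion
$$
\log|1-e^{i\phi}|=-\sum_{n\ge1}\frac{\cos n\phi}{n}.
$$
Applying this to $\phi=j\theta$ and pairing against the nonnegative Fej\'er kernel, the weighted sum is bounded above by the negative of a sum of $\cos$ terms that can be bounded by a harmonic-number-type quantity. I would aim for at most $\frac12 H_{\lfloor J/2\rfloor+1}$-like terms, which is $\le \frac12\log(J/2+1)+\frac12$ using $H_n\le \log n+1$.

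If the direct Fourier bookkeeping gets messy, the fallback is an elementary route. Bound $\log|1-z^j|\le\log 2$ for $j$ up to a cutoff, and average the rest using Jensen-type vanishing of means. Then optimize the cutoff near $J/2$; this is where the $\log(J/2+1)$ shape should come from.
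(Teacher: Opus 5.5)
\textbf{There is a genuine gap.} The paper gives no proof of this lemma; it only cites Silverman's Proposition 2.1. Your proposal therefore has to stand on its own, and it does not. The step you flag as the main obstacle is the entire content of the lemma, and the mechanism you sketch for it does not work. Your reduction to the boundary is fine but does not help.

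Write $S(z)$ for the left-hand side. On $|z|=1$ your expansion gives
\[
S(e^{i\theta})=\sum_{n\ge1}\frac{1-F_J(n\theta)}{2n},
\]
where $F_J\ge0$ is the Fej\'er kernel.
\begin{itemize}
\item Positivity bounds each term by $1/(2n)$, but these bounds sum to $+\infty$. The series converges only conditionally.
\item Nothing in your plan justifies discarding the terms with $n>\lfloor J/2\rfloor+1$, and their sum need not be nonpositive. For $J=1$ and $\theta=\pi/2$, the $n=1$ term is $0$ while $S(i)=\frac14\log 2>0$.
\item Passing to $|z|=r<1$ and letting $r\to1$ goes the wrong way. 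There the same argument gives only $S\le\frac12\log\frac1{1-r}$, which blows up as $r\to1$.
\item The fallback also fails. The vanishing of $\int\log|1-e^{i\phi}|$ is a statement about circle averages and says nothing at a fixed $z$. Bounding $\log|1-z^j|\le\log 2$ on an initial range costs an amount linear in the cutoff.
\item The Fej\'er-average identity is correct but unhelpful. It only reduces you to the partial products $\prod_{j\le N}|1-z^j|$, whose sup norms are an Erd\H{o}s--Szekeres type problem and are not $O(\sqrt N)$.
\end{itemize}

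\textbf{The missing idea} is to compare with a \emph{fixed} interior radius instead of letting $r\to1$. For $|w|\le1$ and $0<r<1$, the map $w\mapsto(1-w)/(1-rw)$ has modulus at most $2/(1+r)$, with the maximum at $w=-1$. Using $\sum_{j=1}^J(1-\frac{j}{J+1})=\frac J2$, this gives for every $|z|\le1$
\[
S(z)\le\frac J2\log\frac{2}{1+r}-\sum_{n\ge1}\frac{r^n}{n}\operatorname{Re}\sum_{j=1}^J\Bigl(1-\frac{j}{J+1}\Bigr)z^{jn}.
\]
Fej\'er positivity gives $\operatorname{Re}\sum_{j=1}^J(1-\frac{j}{J+1})w^j\ge-\frac12$ on $|w|=1$. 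The minimum principle for harmonic functions extends this to $|w|\le1$. The series now converges absolutely, so
\[
S(z)\le\frac J2\log\frac{2}{1+r}+\frac12\log\frac{1}{1-r}.
\]
Take $r=J/(J+2)$. The first term is $\frac J2\log\frac{J+2}{J+1}<\frac12$, and the second is exactly $\frac12\log(\frac J2+1)$. The damping factor $r^n$ is the smooth cutoff at $n\approx J/2$ that your harmonic-number heuristic was aiming for. It also makes the reduction to the boundary unnecessary.
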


We require the following equidistribution results.

\begin{theorem}\label{bilu} (Y. Bilu, \cite{bilu}) Let $(\xi_n)_n$ be a strict sequence (A sequence $(\xi_n)$ in $\mathbb G_m^N$ is strict if, for every proper algebraic subgroup $H$ of $\mathbb G_m^N$, only finitely many $\xi_n$ lie in $H$) in $(\overline{\mathbb{Q}}^\times)^N$ such that $h(\xi_n)\to 0$ as $n\to\infty$, where $h$ is the absolute logarithmic Weil height. If $S_n$ is the Galois orbit of $\xi_n$ and $\mu_{S_n}$ is the uniform probability measure on $S_n$, then $\mu_{S_n}$ converges weakly to a Haar probability measure on the unit polycircle $(\mathbb{S}^1)^N$. 
\end{theorem}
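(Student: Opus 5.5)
We would prove the theorem in two stages. First we reduce from $\mathbb{G}_m^N$ to the case $N=1$ using Weyl's criterion. Then we prove the one-dimensional case by a potential-theoretic argument built on the discriminant.

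\textbf{Reduction to $N=1$.} The span of the characters $\chi_{\mathbf a}(z)=z_1^{a_1}\cdots z_N^{a_N}$ with $\mathbf a\in\mathbb Z^N$ is dense in $C((\mathbb S^1)^N)$. So it suffices to show three things:
\begin{itemize}
\item the measures $\mu_{S_n}$ are tight and all limit points are supported on $(\mathbb S^1)^N$;
\item $\int \chi_{\mathbf a}\,d\mu_{S_n}\to 0$ for every $\mathbf a\neq 0$;
\item the same holds for $\int f(\chi_{\mathbf a})\,d\mu_{S_n}$, for suitable test functions $f$ of one variable.
\end{itemize}
Fix $\mathbf a\ne 0$ and put $\eta_n=\chi_{\mathbf a}(\xi_n)$. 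Then $h(\eta_n)\le |\mathbf a|_1\,h(\xi_n)\to0$. The pushforward of $\mu_{S_n}$ under $\chi_{\mathbf a}$ is the uniform measure on the Galois orbit of $\eta_n$, because each conjugate is hit equally often. Strictness applied to the proper subgroups $\ker(\chi_{\mathbf a}^k)$ shows that $\eta_n$ is a root of unity of order $\le k$ for only finitely many $n$, for each fixed $k$. Combined with $h(\eta_n)\to0$ and Northcott's theorem, this gives $[\mathbb Q(\eta_n):\mathbb Q]\to\infty$. Tightness and concentration near the polycircle follow coordinatewise from the $N=1$ case applied to $\mathbf a=e_i$. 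Each $e_i$ is a nonzero character, so strictness again forces the degree to grow. Hence everything reduces to the one-dimensional statement: \emph{if $\eta_n\in\overline{\mathbb Q}^\times$, $h(\eta_n)\to0$ and $\deg\eta_n\to\infty$, then the orbit measures $\nu_n$ converge weakly to Haar measure on $\mathbb S^1$.} This gives $\int z^k\,d\nu_n\to0$ for $k\ne0$, which is the Weyl criterion.

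\textbf{The case $N=1$.} Let $f_n=a_0\prod_{i}(x-\alpha_i)\in\mathbb Z[x]$ be the minimal polynomial of $\eta_n$, of degree $d_n$. Then
$$
d_n h(\eta_n)=\log M(f_n)=\log|a_0|+\sum_i\log^+|\alpha_i|,
$$
and the same identity holds for $1/\eta_n$. Both $\frac1{d_n}\sum\log^+|\alpha_i|$ and $\frac1{d_n}\sum\log^+|\alpha_i^{-1}|$ tend to $0$. This gives tightness in $\mathbb C^\times$, and every weak limit $\nu$ is supported on $\mathbb S^1$. Next we use the discriminant. Since $f_n$ is irreducible, $\operatorname{Disc}(f_n)$ is a nonzero integer, so
$$
\sum_{i\ne j}\log|\alpha_i-\alpha_j|\ \ge\ -(2d_n-2)\log|a_0| \ \ge\ -2d_n^2h(\eta_n).
$$
Dividing by $d_n^2$ shows that the discrete logarithmic energy $\frac{1}{d_n^2}\sum_{i\ne j}\log|\alpha_i-\alpha_j|^{-1}$ is at most $o(1)$.

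\textbf{Main obstacle: passing to the limit.} The kernel $\log|z-w|^{-1}$ is singular on the diagonal, so we truncate it to $\min(M,\log|z-w|^{-1})$ and control the error. The diagonal contributes $d_nM/d_n^2\to0$. Large $|\alpha_i|$ are controlled by the $\log^+$ bounds above, which make the kernel bounded below on the relevant region. Letting $n\to\infty$ and then $M\to\infty$, monotone convergence gives
$$
I(\nu)=\iint\log|z-w|^{-1}\,d\nu\,d\nu\le 0 .
$$
Here $\nu$ is a probability measure on $\mathbb S^1$. The unit circle has logarithmic capacity $1$, so its minimal energy is $0$, attained uniquely by the equilibrium measure, which is Haar measure. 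Alternatively, the Fourier expansion
$$
\log|1-e^{i\theta}|^{-1}=\sum_{k\ge1}\frac{\cos k\theta}{k}
$$
gives $I(\nu)=\sum_{k\ne0}|\hat\nu(k)|^2/(2|k|)$. This vanishes only if $\hat\nu(k)=0$ for all $k\neq0$. Either way $\nu$ is Haar measure. Every subsequential limit is therefore Haar, so the whole sequence converges. This proves the case $N=1$ and, by the reduction, the theorem.
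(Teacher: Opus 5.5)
The paper does not prove this statement. It quotes it from Bilu and uses it as a black box: in Theorem~\ref{thm1} and, with $N=2$, in the proof of Theorem~\ref{thm:abmodm}. So there is no in-paper argument to compare with.

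Your proposal is a self-contained proof along the standard route, and that route is sound:
\begin{itemize}
\item The pushforward of $\mu_{S_n}$ under $\chi_{\mathbf a}$ is the orbit measure of $\eta_n$.
\item Strictness, Northcott, and Kronecker (to exclude a repeated value of height $0$) force $\deg\eta_n\to\infty$.
\item The nonzero integer discriminant bounds the discrete energy by $2h(\eta_n)$.
\item $I(\nu)\le 0$ for a probability measure on $\mathbb{S}^1$ forces $\nu$ to be Haar measure.
\end{itemize}
Your argument is non-effective. For explicit constants the paper relies instead on the separately quoted one-dimensional effective version, Theorem~\ref{sombra}.

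Two steps need to be stated correctly.

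\textbf{Moments of unbounded functions.} The claims $\int\chi_{\mathbf a}\,d\mu_{S_n}\to0$ and $\int z^k\,d\nu_n\to0$ are false as written. These test functions are unbounded, and one large conjugate can dominate. For example, take a root $\eta_d$ of $x^d-dx^{d-1}-1$, which is irreducible by Perron's criterion. Then $h(\eta_d)\approx(\log d)/d\to0$ and the degree tends to infinity, yet $\int z\,d\nu_d=1$ for every $d$. What weak convergence gives, and what Weyl's criterion needs, is convergence for the bounded continuous function $\prod_i(z_i/|z_i|)^{a_i}=g\circ\chi_{\mathbf a}$, where $g(w)=w/|w|$. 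This function agrees with $\chi_{\mathbf a}$ on $(\mathbb{S}^1)^N$. Combined with tightness, it gives $\hat\mu(\mathbf a)=0$ for every limit point $\mu$. Your third bullet points this way, but the second bullet and the closing sentence of the reduction should be replaced by it.

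\textbf{The limiting step in dimension one.} The truncated kernel $k_M=\min\{M,\log|z-w|^{-1}\}$ is unbounded below, so weak convergence alone gives the inequality in the wrong direction. Work instead with $k_M(z,w)+\log^+|z|+\log^+|w|$, which is continuous and at least $-\log 2$ for $M\ge0$. Its $\nu_n\otimes\nu_n$-integral is at most $4h(\eta_n)+M/d_n\to0$. The liminf inequality for continuous functions bounded below then gives $\iint k_M\,d\nu\,d\nu\le0$, since the $\log^+$ terms vanish on $\mathbb{S}^1$. Monotone convergence in $M$ then yields $I(\nu)\le0$.

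With these two repairs your argument is complete.
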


More precisely, when $N=1$, we let 
$$
\mu_n = \frac{1}{d_n}\sum_{P_n(\alpha) = 0} \delta_\alpha
$$
where $\alpha\in \mathbb{C}$ and $P_n$ is the minimal polynomial for $\xi_n$ of degree $d_n$. Thus $\mu_n$ is the discrete probability measure on $\mathbb{C}$ supported equally on $\xi_n$ and its Galois conjugates. When $N = 1$, Theorem \ref{bilu} says that $\mu_n \to \mu_{\mathbb{S}^1}: = \frac{m_{\mathbb{S}^1}}{2\pi}$ in the weak-${}^*$ topology on $\mathbb{C}$. In other words, given any continuous function $\varphi$ with compact support on $\mathbb{C}$, we have 
\begin{equation}\label{eq2.10000}
\frac{1}{d_n}\sum_{P_n(\alpha) = 0}\varphi(\alpha) = \int_\mathbb{C} \varphi d\mu_n \to \frac{1}{2\pi}\int_{0}^{2\pi}\varphi(e^{it}) dt.
\end{equation}

The following is an effective version of Theorem \ref{bilu} when $N =1$, which is useful to prove Corollary \ref{thm9.1-cor1}.
For $z=e^{u+2\pi i\theta}$ and $w=e^{v+2\pi i\phi}$ with $u,v\in\mathbb{R}$ and $\theta,\phi\in\mathbb{R}/\mathbb{Z}$, we use the logarithmic-polar metric
$$
d_{\mathrm{lp}}(z,w)=\left((u-v)^2+\operatorname{dist}_{\mathbb{R}/\mathbb{Z}}(\theta,\phi)^2\right)^{1/2},
$$
where $\operatorname{dist}_{\mathbb{R}/\mathbb{Z}}(\theta,\phi)=\min_{n\in\mathbb{Z}}|\theta-\phi+n|$.
In Theorem \ref{sombra} and below, $\operatorname{Lip}(F)$ denotes the Lipschitz constant with respect to this metric.
\begin{theorem}\label{sombra} (C. D'Andrea \emph{et al.}, \cite{sombra}) 
Let $\xi\in\overline{\mathbb{Q}}^{\times}$ be an algebraic number of degree $d$ and height $h(\xi)\leq1$,
and let $S$ be its Galois orbit.  Let $F:\mathbb{C}^*\to\mathbb{R}$ be a Lipschitz function and its 
restriction $F_0$ to $\mathbb{S}^1$ is of class $C^2$.  Then
\begin{equation}\label{eq:2.1}
\left|{1\over d}\sum_{\beta\in S}F(\beta)
       -\int_{\mathbb{S}^1}F\,d\lambda\right|
 \leq c(F)\left(4h(\xi)+64{\log(d+1)\over d}\right)^{1/2},
\end{equation}
where $\lambda$ is Haar probability measure on $\mathbb{S}^1$, and one may take
\begin{equation}\label{eq:2.2}
c(F)\leq2\operatorname{Lip}(F)+16\|\widehat{F_0'}\|_1.
\end{equation}
Here the prime means differentiation in the angular coordinate
$\theta\in\mathbb{R}/\mathbb{Z}$, and the norm in \eqref{eq:2.2} is the $\ell^1$-norm of
the Fourier coefficients.
\end{theorem}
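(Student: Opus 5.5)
The approach is to split $F$ into a radial part and an angular part. The radial part is handled by the Lipschitz constant and the height. The angular part is handled by Fourier analysis on $\mathbb{S}^1$, together with an energy (discriminant) estimate that controls the Fourier coefficients of the difference between $\mu_S=\frac1d\sum_{\beta\in S}\delta_\beta$ and $\lambda$.

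\emph{Step 1 (radial reduction).} Write $\beta=e^{u_\beta+2\pi i\theta_\beta}$ and $F(\beta)=F_0(\theta_\beta)+\bigl(F(\beta)-F(e^{2\pi i\theta_\beta})\bigr)$. By the definition of $d_{\mathrm{lp}}$, the bracket has absolute value at most $\operatorname{Lip}(F)\,|u_\beta|=\operatorname{Lip}(F)\,|\log|\beta||$. Let $a$ be the leading coefficient of the minimal polynomial of $\xi$ over $\mathbb{Z}$. Then $\frac1d\sum_\beta\log^+|\beta|=h(\xi)$ and $\frac1d\sum_\beta\log|\beta|=\frac1d\log|P(0)/a|$, and these give $\frac1d\sum_\beta|\log|\beta||\le 2h(\xi)$. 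Since $h(\xi)\le1$, we have $2h\le 2\sqrt h\le (4h+64\log(d+1)/d)^{1/2}$. So the radial error is absorbed by the term $2\operatorname{Lip}(F)$ in \eqref{eq:2.2}.

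\emph{Step 2 (Fourier reduction).} Expand $F_0(\theta)=\sum_k\widehat{F_0}(k)e^{2\pi ik\theta}$. Since $F_0\in C^2$, the series converges absolutely, and $\widehat{F_0}(k)=\widehat{F_0'}(k)/(2\pi ik)$ for $k\ne0$. The constant term equals $\int F\,d\lambda$. Hence the angular error is
\[
\Bigl|\sum_{k\neq0}\frac{\widehat{F_0'}(k)}{2\pi i k}\,\widehat{\nu}(k)\Bigr|,\qquad \widehat\nu(k)=\frac1d\sum_{\beta}e^{-2\pi i k\theta_\beta}.
\]
It therefore suffices to prove the uniform bound $|\widehat\nu(k)|\le C|k|\,\varepsilon^{1/2}$ for all $k\ne0$, where $\varepsilon=4h+64\log(d+1)/d$ and $C$ is absolute. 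This yields the $16\|\widehat{F_0'}\|_1$ term.

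\emph{Step 3 (energy bound, the main obstacle).} Project the conjugates radially to $\zeta_\beta=e^{2\pi i\theta_\beta}$ and consider the logarithmic energy
\[
I(\nu)=-\iint_{x\ne y}\log|x-y|\,d\nu(x)\,d\nu(y),
\]
where $\nu$ is $\mu_S$ projected to $\mathbb{S}^1$. On the circle, the Fourier identity $-\log|1-e^{2\pi i t}|=\sum_{k\ne0}\frac{e^{2\pi ikt}}{2|k|}$ gives formally $I(\nu)-I(\lambda)=\sum_{k\ne0}\frac{|\widehat\nu(k)-\widehat\lambda(k)|^2}{2|k|}$. The diagonal $x=y$ makes this divergent for atomic measures, so I would first smooth each atom at scale $r\approx 1/d$ and pay an error of order $\log(d+1)/d$.

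The arithmetic input is the nonvanishing of the discriminant of the minimal polynomial. This gives
\[
\prod_{i\ne j}|\beta_i-\beta_j|\ge |a|^{-(2d-2)},
\]
and hence
\[
-\frac1{d^2}\sum_{i\ne j}\log|\beta_i-\beta_j|\le \frac{2\log|a|}{d}\lesssim h.
\]
Passing from $\beta_i$ to $\zeta_i$ costs at most a multiple of $\frac1d\sum|\log|\beta_i||\le 2h$, using the elementary inequality $|\zeta_i-\zeta_j|\ge|\beta_i-\beta_j|/\max(1,|\beta_i|)\max(1,|\beta_j|)$, up to the standard adjustment. Combining these gives $\sum_{k\ne0}|\widehat\nu(k)|^2/|k|\lesssim\varepsilon$. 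Each single term then satisfies $|\widehat\nu(k)|^2\le |k|\,\varepsilon$, which gives $|\widehat\nu(k)|\le|k|^{1/2}\varepsilon^{1/2}\le|k|\varepsilon^{1/2}$.

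Finally, insert this bound into Step 2 and track constants: the factor $1/(2\pi|k|)$ cancels $|k|$, and the smoothing error supplies the $64\log(d+1)/d$ term. This yields \eqref{eq:2.1} with $c(F)$ as in \eqref{eq:2.2}. I expect the careful regularization of the diagonal in Step 3, with explicit constants, to be the main difficulty.
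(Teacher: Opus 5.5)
The paper does not prove this statement. It is quoted from D'Andrea et al.\ and used as a black box in Proposition~\ref{prop:effective}, so there is no in-paper argument to compare with.

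Judged on its own terms, your Steps~1 and~2 are fine. The radial error is at most $\operatorname{Lip}(F)\cdot\frac1d\sum_\beta|\log|\beta||\le 2\operatorname{Lip}(F)\,h(\xi)$, and the Fourier expansion correctly reduces everything to a bound $|\widehat\nu(k)|\le C|k|\varepsilon^{1/2}$.

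The gap is in Step~3. Your ``elementary inequality'' $|\zeta_i-\zeta_j|\ge|\beta_i-\beta_j|/\bigl(\max(1,|\beta_i|)\max(1,|\beta_j|)\bigr)$ is false, because radial projection is not injective. For $\xi=2+\sqrt2$, the distinct conjugates $2\pm\sqrt2$ both project to $1$, so the left side vanishes while the right side is positive. The same happens whenever two conjugates share an argument. (The right-hand side is comparable to the chordal distance of $\beta_i,\beta_j$ on the Riemann sphere, which says nothing about their radial projections.)

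Consequently the discriminant gives no control on the energy of the projected measure $\nu$. Its off-diagonal energy can be $+\infty$. After smoothing at scale $r$, the correct comparison is $|\beta_i-\beta_j|\le\max(|\beta_i|,|\beta_j|)\bigl(|\zeta_i-\zeta_j|+|\log|\beta_i||+|\log|\beta_j||\bigr)$. This only bounds the averaged excess of $-\log\max(|\zeta_i-\zeta_j|,r)$ over $-\log|\beta_i-\beta_j|$ by roughly $\log(1+4h(\xi)/r)$. For $r\approx 1/d$ that is $\log(1+4d\,h(\xi))$, which is not $O\bigl(h(\xi)+\log(d+1)/d\bigr)$. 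So the central estimate $\sum_{k\ne0}|\widehat\nu(k)|^2/|k|\le C\varepsilon$ is not established.

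The fix is not to project the points, and to run the energy argument in $\mathbb{C}$. There the radial displacement is paid for by the height, through the potential $U^\lambda(z)=-\log^+|z|$ of $\lambda$.

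1. Replace each $\delta_\beta$ by normalized arclength on $|z-\beta|=r$, and call the resulting measure $\mu_r$.
2. Superharmonicity of $w\mapsto-\log\max(|z-w|,r)$ bounds the off-diagonal interactions by $-\log|\beta_i-\beta_j|$.
3. The diagonal terms contribute $\log(1/r)/d$.
4. The cross term with $\lambda$ equals $\frac2d\sum_\beta\log^+|\beta|$ up to an error of $2r$.

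Together with the discriminant this gives
\[
I(\mu_r-\lambda)\le 2h(\xi)+\frac{\log(1/r)}{d}+2r .
\]

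Now pair $\mu_r-\lambda$ with the harmonic extension $\Phi_k$ of $e^{2\pi ik\theta}$. For $k>0$ this is $z^k$ on $|z|\le1$ and $\bar z^{-k}$ on $|z|\ge1$; take the conjugate for $k<0$. Its real and imaginary parts each have normalized Dirichlet integral $|k|$. Then:

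- The Dirichlet-form Cauchy--Schwarz inequality gives $|\int\Phi_k\,d(\mu_r-\lambda)|\le(2|k|)^{1/2}I(\mu_r-\lambda)^{1/2}$.
- Since $\Phi_k$ is $|k|$-Lipschitz, $|\int\Phi_k\,d(\mu_S-\mu_r)|\le|k|r$.
- The pointwise bound $|\Phi_k(\beta)-e^{2\pi ik\theta_\beta}|\le|k|\,|\log|\beta||$ averages to at most $2|k|h(\xi)$.

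With $r=1/d$ and $h\le\sqrt h$, this yields $|\widehat\nu(k)|\le C|k|\varepsilon^{1/2}$ with a small absolute $C$. That is exactly what your Step~2 needs, and the constants in \eqref{eq:2.2} are then comfortably met.
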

   The absolute numerical constant inside the square root in \eqref{eq:2.1} may be taken to be $64$, while $c(F)$ is controlled by \eqref{eq:2.2}. In dimension one, the generalized degree in \cite{sombra} is the usual degree. The following known results are useful to prove our results. 
 
 \begin{theorem}\label{voutier}(P. Voutier, \cite{vou})
 If $\alpha$ is a non-zero algebraic integer of degree $d\geq 2$ which is not a root of unity, then 
 $$
 \log M(\alpha)  > \frac{1}{4}\left(\frac{\log\log d}{\log d}\right)^3.
 $$
 \end{theorem}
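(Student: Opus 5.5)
Theorem \ref{voutier} is quoted from \cite{vou} and is used below only as a black box. If I had to prove it, I would follow Dobrowolski's method in the interpolation-determinant form of Cantor--Straus, since that version avoids a separate zero estimate. Let $\alpha=\alpha_1,\ldots,\alpha_d$ be the conjugates of $\alpha$ and $f$ its minimal polynomial. The basic arithmetic input is that for every prime $p$ we have $f(x^p)\equiv f(x)^p \pmod p$. Consequently, for a prime $p$ and an integer $k$, the product $\prod_{i,j}(\alpha_i^{p}-\alpha_j)$ is divisible by $p^{d}$ in the ring of algebraic integers.

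First I would dispose of the degenerate case where, for some prime $p$ in the range considered, $\alpha^p$ is a conjugate of $\alpha$. I expect a standard lemma of Dobrowolski to show that this forces either $\alpha$ to be a root of unity (excluded by hypothesis) or $M(\alpha)\ge 2^{1/d}$-type bounds. Such bounds are far stronger than needed once $d$ is not tiny.

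In the main case, the sets $\{\alpha_i^{p}\}$ for primes $p\le N$ are disjoint from $\{\alpha_j\}$. I would form the confluent Vandermonde-type determinant $\Delta$ built from the monomials $x^{a}(\log)$-derivatives up to order $T-1$ at the points $\alpha_j$, together with the monomials evaluated (without derivatives) at all $\alpha_i^p$, $p\le N$ prime. There are two estimates to make.
\begin{itemize}
\item \emph{Nonvanishing and lower bound.} $\Delta\neq0$ by the distinctness of the nodes, and $\Delta^2$ is a symmetric expression, hence a nonzero rational integer. By the congruence above it is divisible by $\prod_{p\le N}p^{dT}$, which gives $|\Delta^2|\ge \exp(2dT\sum_{p\le N}\log p)$.
\item \emph{Upper bound.} By Hadamard's inequality, together with the standard estimate for confluent Vandermonde determinants (all nodes of modulus at most $M(\alpha)$ raised to the power of the exponent), $|\Delta|$ is at most $L^{O(L)}\,T!^{-d}\,M(\alpha)^{O(NL)}$, with explicit constants. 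Here $L=dT+d\pi(N)$ is the size of the matrix.
\end{itemize}

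Comparing the two bounds gives an inequality of the form $\log M(\alpha)\gtrsim \bigl(dT\,\theta(N)-L\log(L/T)\bigr)/(NL)$. I would then choose parameters: $N\asymp \log d\cdot \log\log d$-scaled so that $\theta(N)\approx N$ and $\pi(N)\approx N/\log N$, and $T\asymp \log d/\log\log d$. With this choice the main terms balance and yield $\log M(\alpha)\ge c\,(\log\log d/\log d)^3$.

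The main obstacle is obtaining the explicit constant $c=\tfrac14$ uniformly for all $d\ge2$. This requires Rosser--Schoenfeld type explicit bounds for $\theta(N)$ and $\pi(N)$ and a careful explicit treatment of the factorials in the determinant bound. For small $d$, where the asymptotic choice of parameters is ineffective, one must check directly against known minimal Mahler measures in each degree, or against Smyth's bound in the non-reciprocal case; I would handle this range by table or computation.
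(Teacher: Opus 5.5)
The paper gives no proof of this statement. It is quoted from Voutier and used as a black box, and Voutier's proof is an explicit form of the Dobrowolski--Cantor--Straus method, so your choice of method is right. As written, however, your outline breaks at concrete points. First, you treat the wrong degenerate case. A coincidence $\alpha_i^p=\alpha_j$, or $\alpha_i^p=\alpha_j^q$ with $p\neq q$, needs no lemma: it forces $p\,h(\alpha)=h(\alpha)$, resp.\ $p\,h(\alpha)=q\,h(\alpha)$, hence $h(\alpha)=0$. The coincidence that actually makes $\Delta=0$ is $\alpha_i^p=\alpha_j^p$ with $i\neq j$, i.e.\ $\deg\alpha^p<d$. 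This occurs for non-roots of unity such as $\alpha=\sqrt2$, and you never treat it. Your fallback is also backwards: a bound of type $M(\alpha)\ge 2^{1/d}$ gives only $\log M(\alpha)\ge(\log 2)/d$, which is far \emph{weaker} than $(\log\log d/\log d)^3$ for large $d$. What works is a reduction on the degree. Suppose $\deg\alpha^p<d$. If $x^p-\alpha^p$ is irreducible over $\mathbb{Q}(\alpha^p)$, then $[\mathbb{Q}(\alpha):\mathbb{Q}(\alpha^p)]=p$ and $\log M(\alpha)=\log M(\alpha^p)$. Otherwise $\alpha^p=\gamma^p$ with $\gamma\in\mathbb{Q}(\alpha^p)$, so $\alpha=\zeta\gamma$ with $\zeta^p=1$, and $\log M(\alpha)=[\mathbb{Q}(\alpha):\mathbb{Q}(\gamma)]\log M(\gamma)\ge\log M(\gamma)$. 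In both cases you pass to an algebraic integer of smaller degree that is not a root of unity. Induction on $d$ then lets you assume $\deg\alpha^p=d$ for all $p\le N$; the bound decreases for $d\ge16$, and smaller degrees need a direct check.

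Second, the quantitative bookkeeping is wrong, and the parameter choice built on it fails. The rows at the nodes $\alpha_i^p$ have entries $\alpha_i^{pa}$, $0\le a<L$. Hadamard's inequality therefore produces $M(\alpha)^{(L-1)\sum_{p\le N}p}$ with $\sum_{p\le N}p\sim N^2/(2\log N)$, not $M(\alpha)^{O(NL)}$. The $T$ derivative rows at each $\alpha_j$ cost $\prod_{t<T}\binom{L-1}{t}^{d}$, roughly $\tfrac{dT^2}{2}\log(L/T)$ in the logarithm with $L/T\ge d$, and your displayed inequality does not account for this. The gain $dT\theta(N)\approx dTN$ must beat this loss, which forces $N\gtrsim\tfrac12 T\log d\asymp(\log d)^2/\log\log d$. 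With your $N\asymp\log d\log\log d$ and $T\asymp\log d/\log\log d$, the loss exceeds the gain by a factor $\asymp\log d/(\log\log d)^2$ and nothing survives. Conversely, your own displayed inequality with your parameters would, for suitable constants, give $\log M(\alpha)\gg1/\log\log d$, far beyond anything known, which signals the error. With $T\approx\log d/\log\log d$ and $N\approx(\log d)^2/\log\log d$, the corrected inequality does give $c(\log\log d/\log d)^3$; Cantor--Straus obtain $c=2+o(1)$. Finally, the constant $\tfrac14$ for \emph{every} $d\ge2$ is the whole content of Voutier's theorem. It requires the corrected estimates to be made fully explicit (Rosser--Schoenfeld bounds, exact factorial and Hadamard terms) down to a degree small enough that Smyth's theorem and computation can finish. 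Deferring that step leaves the statement, as stated, unproved.
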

 
 \begin{theorem}\label{borwein} (P. Borwein \emph{et al.} \cite{borw})
 Let $m\geq 2$, and let $P(x)\in \mathbb{Z}[x]$ be a monic polynomial of degree $d$ with no cyclotomic factors that satisfies
 $P(x) \equiv x^d+x^{d-1}+\cdots+x+1\pmod{m}$. Then 
 $$
 \log M(P) \geq \frac{d}{d+1} c_m$$
 where  $c_2 = (\log 5)/4$ and $c_m = \log(\sqrt{m^2+1}/2)$ for $m\geq 3$. 
 \end{theorem}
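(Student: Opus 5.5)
The plan is the resultant method that Lemma~\ref{lem:resultant-mahler} is designed for. Since $P$ is monic with $P(x)\equiv x^d+\cdots+x+1\pmod m$, we have
$$
(x-1)P(x)\equiv x^{d+1}-1 \pmod m .
$$
Hence for every root $\alpha$ of $P$ (working in the ring of integers of the splitting field),
$$
\alpha^{d+1}-1 = m\,G(\alpha)\quad\text{for some } G\in\mathbb{Z}[x].
$$
Write $w=x^{d+1}$ and $y=(w-1)/m$. Every integer polynomial $T(y)$ evaluated at $\alpha$ is then an algebraic integer. So for any $T\in\mathbb{Z}[y]$, the product $\prod_{P(\alpha)=0}T\bigl((\alpha^{d+1}-1)/m\bigr)$ is a symmetric integer expression in the roots, hence a rational integer.

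\textbf{Step 1: a nonzero integer, hence a lower bound.} Choose $T$ with $T(0)\neq0$ and whose roots $y_0$ satisfy $my_0+1\in\mathbb{S}^1$ only at roots of unity. Then $P$ having no cyclotomic factor makes the product nonzero. Indeed, $T(y(\alpha))=0$ would force $\alpha^{d+1}$ to be such a point, and $\alpha$ would be a root of unity. The product is therefore at least $1$ in absolute value. Put $H(x)=m^{\deg T}\,T\bigl((x^{d+1}-1)/m\bigr)$, a polynomial in $\mathbb{C}[x]$ of degree $(d+1)\deg T$. Then
$$
|\operatorname{Res}(P,H)| = m^{d\deg T}\prod_{\alpha}\bigl|T(y(\alpha))\bigr|\ge m^{d\deg T}.
$$

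\textbf{Step 2: the upper bound.} Lemma~\ref{lem:resultant-mahler} gives
$$
m^{d\deg T}\le \|H\|_\infty^{\,d}\,M(P)^{(d+1)\deg T}.
$$
Moreover $\|H\|_\infty=\max_{|w|=1}|m^{\deg T}T((w-1)/m)|$ does not depend on $d$, because $z\mapsto z^{d+1}$ maps $\mathbb{S}^1$ onto itself. Thus
$$
\log M(P)\ge \frac{d}{d+1}\cdot\frac{1}{\deg T}\Bigl(\deg T\,\log m-\log\max_{|w|=1}\bigl|m^{\deg T}T((w-1)/m)\bigr|\Bigr).
$$
The simplest choice $T(y)=y$ already gives $\frac{d}{d+1}\log(m/2)$, which has exactly the shape $\frac{d}{d+1}c_m$ claimed.

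\textbf{Step 3: the optimization.} The main obstacle is choosing $T$, possibly together with extra factors such as powers of $w$ or a $w$-polynomial that is a unit modulo $m$, so that the normalized sup on the circle improves $2$ to $2m/\sqrt{m^2+1}$, i.e. $c_m=\log(\sqrt{m^2+1}/2)$. I would first try low-degree $T$ of the form $y^a(y+k)^b$ with $k$ an integer, and optimize the exponent ratio. I would also exploit that $|w-1|$ and $|w-1+km|$ are maximized at different points of $\mathbb{S}^1$, computing the maximum of $|w-1|^{a}|w-1+km|^{b}$ explicitly. It may also help to replace a fixed $T$ by a sequence $T_k$ and use Lemma~\ref{lem:length-limit} to pass to a limit.

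For $m=2$ a separate, finer auxiliary choice is needed to reach $c_2=(\log5)/4$. There the parity structure (all coefficients odd) allows working with $w$ and $-w$ simultaneously, which I would set up analogously.
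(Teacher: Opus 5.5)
Your Steps~1--2 set up the right framework: it is the resultant method behind Lemma~\ref{lem:zero}. But the proposal stops exactly where the content of the statement lies. Step~3 never produces $c_m$, and for $m\ge4$ the family you plan to search cannot produce it.

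\textbf{Your family cannot reach $c_m$ for $m\ge4$.} Take $T=y^a\prod_i(y+k_i)^{b_i}$ with $0\neq k_i\in\mathbb{Z}$ and evaluate at $w=-1$, i.e.\ $y=-2/m$. Since $|k_im-2|\ge 2$ when $m\ge4$, you get $\max_{|w|=1}|T((w-1)/m)|\ge (2/m)^{\deg T}$. So your bound never exceeds $\frac{d}{d+1}\log(m/2)$, which is strictly smaller than $\frac{d}{d+1}c_m$. The obstruction is structural. $|w-1|$ peaks at $w=-1$, while a monic factor $y+k$ vanishes at $w=1-km$, outside the circle, so it cannot damp that peak.

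For $m=3$ your family does happen to work. Taking $T=y^2(y+1)$, i.e.\ $H=(x^{d+1}-1)^2(x^{d+1}+2)$, gives $\|H\|_\infty=3\sqrt3$ and $|\operatorname{Res}(P,H)|\ge 3^{3d}$. Nonvanishing holds because $x^{d+1}+2$ is Eisenstein, hence irreducible of degree $d+1>\deg P$. This yields $\frac{d}{d+1}\cdot\frac{\log 3}{2}$, which exceeds $\frac{d}{d+1}c_3$. It does not cover $m\ge4$ or $m=2$.

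\textbf{The missing idea.} Borwein et al.\ bring in $x^{d+1}+1$, which vanishes exactly at that peak. (The paper quotes the theorem without proof, but reproduces the same argument with $x^d$ in place of $x^{d+1}$ in Lemma~\ref{lem:zero}.) The resultant of $P$ with $x^{d+1}+1$ is not divisible by $m$. It is, however, a nonzero integer because $P$ has no cyclotomic factor, and that is all you need.

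With $H(x)=(x^{d+1}-1)^{m^2}(x^{d+1}+1)$ you get $|\operatorname{Res}(P,H)|\ge m^{dm^2}$. The sup norm is $\|H\|_\infty=\max_t 2^{m^2+1}|\sin(t/2)|^{m^2}|\cos(t/2)|=2^{m^2+1}m^{m^2}(m^2+1)^{-(m^2+1)/2}$. Lemma~\ref{lem:resultant-mahler} then gives exactly $\log M(P)\ge\frac{d}{d+1}\log(\sqrt{m^2+1}/2)$.

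In your notation this is the non-monic choice $T(y)=y^{m^2}(my+2)$. Your Step~1 would admit it once the spurious condition $T(0)\neq0$ is dropped, but your search over monic factors $y+k$ excludes it.

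\textbf{The case $m=2$.} Here the point is that $x^{d+1}+1\equiv x^{d+1}-1\pmod 2$. Hence $4^d\mid\operatorname{Res}(P,x^{2(d+1)}-1)$ and $2^d\mid\operatorname{Res}(P,x^{2(d+1)}+1)$. Take $H=(x^{2(d+1)}-1)^4(x^{2(d+1)}+1)$, with $\|H\|_\infty=2^9/(25\sqrt5)$. This yields $\log M(P)\ge\frac{d}{d+1}\cdot\frac{\log5}{4}$. Your remark about using $w$ and $-w$ points in this direction but is not carried out.

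\textbf{Step~1 needs repair.} The nonvanishing argument is incomplete as written. The condition $T(0)\neq0$ contradicts your own choice $T=y$. More seriously, the argument only treats roots $y_0$ of $T$ with $|my_0+1|=1$, and other roots can be hit. For $m=3$, $T=y(y-1)$ and $P=x-2\equiv x+1\pmod 3$, one has $\alpha^2=4$ and $y(\alpha)=1$, so the product is $0$. Each factor's nonvanishing must be checked separately. It is automatic when every root of $T$ corresponds to $w=\pm1$, as in the choices above.
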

 
 \begin{theorem}\label{sliver} (J. Silverman, \cite{silver}) 
 For all $\epsilon >0$, there is a constant $c_\epsilon > 0$ with the following property: Let $P(x) \in \mathbb{Z}[x]$ be a monic polynomial of degree $d$ such that $P(x)$ is divisible by $x^{n-1}+x^{n-2}+\cdots+x+1$ in $(\mathbb{Z}/m\mathbb{Z})[x]$ for some integers $m\geq 2$ and $n \geq \max\{\epsilon d, 2\}$. Suppose further that no root of $P(x)$ is a root of unity. Then 
 $$
 \log M(P) \geq c_\epsilon \log m.
 $$
 \end{theorem}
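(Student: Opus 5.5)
The plan is to extract a large power of $m$ from a resultant and then bound that resultant from above by $M(P)$. Since $\Phi(x):=x^{n-1}+\cdots+x+1$ divides $P$ in $(\mathbb{Z}/m\mathbb{Z})[x]$ and $\Phi$ is monic, I would lift the divisibility to $\mathbb{Z}[x]$:
$$P=\Phi G+mH,\qquad G,H\in\mathbb{Z}[x].$$
Then
$$\operatorname{Res}(\Phi,P)=\prod_{\zeta}P(\zeta)=m^{n-1}\prod_{\zeta}H(\zeta)=m^{n-1}\operatorname{Res}(\Phi,H),$$
where $\zeta$ runs over the $n-1$ nontrivial $n$-th roots of unity. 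This is nonzero because no root of $P$ is a root of unity, and $\operatorname{Res}(\Phi,H)$ is a nonzero integer. Hence
$$(n-1)\log m\le \log|\operatorname{Res}(P,\Phi)|=\sum_{P(\alpha)=0}\log|\Phi(\alpha)|.$$
The same argument applies to any $\Phi_j$-type factor: since $x^k-1$ divides $x^{jk}-1$, the polynomial $(x^{jk}-1)/(x^j-1)$ is also divisible mod $m$ by suitable products of the factors of $\Phi$. So I would record, for a family of auxiliary polynomials $H_j$ built from $1-x^j$, lower bounds of the form $\log|\operatorname{Res}(P,H_j)|\ge(\text{const}\cdot n)\log m$.

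Second, I would bound the right-hand side from above, separating roots by size. For a root $\alpha$ with $|\alpha|>1$, I would write $\alpha^{-1}$, pull out $|\alpha|^{\deg}$, and get a contribution of at most $\deg\cdot\log|\alpha|$ plus a term of the same shape as for small roots. Summed over the roots outside the unit disk, this gives at most $O(n)\log M(P)$, as in Lemma \ref{lem:resultant-mahler}. For roots with $|\alpha|\le1$, a crude bound gives $\log|\Phi(\alpha)|\le\log n$ per root. That yields
$$(n-1)\log m\le O(n)\log M(P)+d\log n,$$
which is useless unless $\log m\gg (d/n)\log n$. To fix this, I would replace the single resultant by the Fejér-weighted combination
$$\sum_{j=1}^J\Bigl(1-\frac{j}{J+1}\Bigr)\log|1-\alpha^j|$$
attached to the factors $x^j-1$, with $J$ a fixed constant depending only on $\epsilon$. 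Lemma \ref{lem:complex} bounds each small root's contribution by $\tfrac12\log(J/2+1)+\tfrac12$, uniformly in $\alpha$.

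Third, I would sum over roots and compare the two sides. The $m$-divisibility gives a lower bound of order $J\,n\log m$ (each $1\le j\le J$ contributes about $n$ factors of $m$, after discarding small degree losses). The upper bound is $d\bigl(\tfrac12\log(J/2+1)+\tfrac12\bigr)+O(Jd)\log M(P)$. Using $n\ge\epsilon d$ and choosing $J=J(\epsilon)$ large, the term $J\epsilon d\log m$ dominates $d\log J$ whenever $\log m$ exceeds a threshold. This gives $\log M(P)\ge c_\epsilon\log m$ for $m$ beyond a threshold depending on $\epsilon$.

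The main obstacle is getting the exact number of factors of $m$ in the weighted resultant, together with the regime of small $m$. For small $m$ the averaging must still beat $d\log J$, so I would take $J$ large relative to $1/\epsilon$ and track the weights carefully. The hope is that the $J$-fold gain on the left beats the $\log J$ loss on the right, so the threshold can be pushed down to every $m\ge2$. If that fails, I would fall back on an equidistribution bound: Bilu's Theorem \ref{bilu}, or the effective Theorem \ref{sombra}, for the case where $\log M(P)/d$ is small.
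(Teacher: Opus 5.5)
\textbf{There is a genuine gap in your third step: the choice of auxiliary polynomials.} Your first step, the lift $P=\Phi G+mH$ giving $m^{n-1}\mid\operatorname{Res}(P,\Phi)$, is correct, and Fej\'er weighting with Lemma~\ref{lem:complex} is the right tool. The problem is that you attach the weights to $x^j-1$ for $1\le j\le J$, with $J=J(\epsilon)$ fixed, and then claim that each $j$ contributes about $n$ factors of $m$. That claim is false. The hypothesis $P\equiv\Phi G\pmod m$ only controls $P$ at the roots of $\Phi=(x^n-1)/(x-1)$. The roots these share with $x^j-1$ are exactly the roots of $(x^{g}-1)/(x-1)$ with $g=\gcd(j,n)$. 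So the hypothesis only guarantees $m^{\gcd(j,n)-1}\mid\operatorname{Res}(P,x^j-1)$, a power at most $m^{J-1}$ and independent of $n$. If, say, $n$ is a prime larger than $J$, you get no power of $m$ at all. Your left-hand side is then $O(J^2\log m)$, while the right-hand side contains $d\bigl(\tfrac12\log(J/2+1)+\tfrac12\bigr)$, so no bound on $M(P)$ follows. The remark about $(x^{jk}-1)/(x^j-1)$ does not repair this. Relatedly, for $|\alpha|\le1$ you feed Lemma~\ref{lem:complex} the variable $z=\alpha$, whereas the right variable is $z=\alpha^{n}$.

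\textbf{The fix is to dilate by $n$ and use $x^{jn}-1$.} This is Silverman's argument; the paper quotes the present statement without proof but reproduces the case $n=d$ in Lemma~\ref{lem:Res-esti} and Theorem~\ref{thm:2(m)}. Since $\Phi\mid x^n-1\mid x^{jn}-1$ in $\mathbb{Z}[x]$, and the cofactor has only roots of unity as zeros, $|\operatorname{Res}(P,x^{jn}-1)|\ge m^{n-1}$ for \emph{every} $j\ge1$. Put $z_\alpha=\alpha^{n}$ if $|\alpha|\le1$ and $z_\alpha=\alpha^{-n}$ otherwise, so $|z_\alpha|\le1$. Then
\[
(n-1)\log m\le jn\log M(P)+\sum_{P(\alpha)=0}\log|1-z_\alpha^{j}|.
\]
Weight by $w_j=1-j/(J+1)$, so that $\sum w_j=J/2$ and $\sum jw_j=J(J+2)/6$, and apply Lemma~\ref{lem:complex}. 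This gives
\[
\log M(P)\ge\frac{3}{J+2}\left(\frac{n-1}{n}\log m-\frac{d}{n}\cdot\frac{\log(J/2+1)+1}{J}\right).
\]
Now use $n\ge2$, $d/n\le1/\epsilon$ and $\log m\ge\log2$. Choose $J=J(\epsilon)$ with $\bigl(\log(J/2+1)+1\bigr)/(\epsilon J)\le\tfrac14\log2$; then $\log M(P)\ge\frac{3}{4(J+2)}\log m$ for all $m\ge2$. Three consequences for your outline:
\begin{itemize}
\item The ``small $m$'' regime is not an obstacle.
\item The coefficient of $\log M(P)$ is $nJ(J+2)/6$, not $O(Jd)$.
\item The equidistribution fallback is unnecessary. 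It would be awkward anyway, since the test function $\log|\Phi|$ changes with $n$ and Bilu's theorem yields no uniform constant.
\end{itemize}
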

 
 The following proposition is useful to prove Theorems \ref{thm9.1} and \ref{thm:abmodm} using the equidistribution theorem of Y. Bilu (Theorem \ref{bilu}). We need the standard complex analysis lemma as follows.

\begin{proposition}\label{pro2.10000}
Let $\rho_0 < \rho_1 < \rho_2 < \rho_3$ be positive real numbers and $S = \{z\in\mathbb{C}\ : \ \rho_1 \leq |z|\leq \rho_2\}$ be the annulus in $\mathbb{C}$. Then there exists a continuous function $f:\mathbb{C}\to \mathbb{R}$ such that 
$$
f(z) = \left\{\begin{array}{ll}
0, & |z| \leq rho_0\\
\frac{|z|-\rho_0}{\rho_1-\rho_0}, & \rho_0 < |z| <\rho_1\\
1, & \rho_1\leq |z| \leq \rho_2\\
\frac{\rho_3-|z|}{\rho_3-\rho_2},& \rho_2 < |z| < \rho_3\\
0, &|z|\geq \rho_3.
\end{array}\right.
 $$
 \end{proposition}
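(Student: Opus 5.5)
The idea is to write $f$ as a continuous function of the single real variable $|z|$. Define $g:[0,\infty)\to\mathbb{R}$ piecewise by the five formulas in the statement, with $|z|$ replaced by $s\ge 0$, and set $f(z)=g(|z|)$. The map $z\mapsto|z|$ is continuous from $\mathbb{C}$ to $[0,\infty)$. So $f$ is continuous as soon as $g$ is continuous, and $f$ automatically takes the stated values on each region.

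The only work is checking continuity of $g$. On each of the five pieces $[0,\rho_0]$, $(\rho_0,\rho_1)$, $[\rho_1,\rho_2]$, $(\rho_2,\rho_3)$ and $[\rho_3,\infty)$, the function $g$ is a constant or an affine function of $s$. The denominators $\rho_1-\rho_0$ and $\rho_3-\rho_2$ are nonzero because the radii are strictly increasing, so each piece is well defined and continuous. It remains to match one-sided limits at the four breakpoints:
\begin{itemize}
\item at $s=\rho_0$, the affine piece tends to $(\rho_0-\rho_0)/(\rho_1-\rho_0)=0$;
\item at $s=\rho_1$, it tends to $1$;
\item at $s=\rho_2$, the decreasing piece tends to $(\rho_3-\rho_2)/(\rho_3-\rho_2)=1$;
\item at $s=\rho_3$, it tends to $0$.
\end{itemize}
These agree with the constant values on the adjacent closed pieces. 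Hence $g$ is continuous on $[0,\infty)$, by the pasting lemma for finitely many closed sets; equivalently, compare left and right limits at each breakpoint.

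I would also record two properties used later for the weak-$^*$ convergence \eqref{eq2.10000}. First, $0\le f\le 1$. Second, $f$ has compact support contained in $\{|z|\le\rho_3\}$, so it is an admissible test function in Theorem~\ref{bilu}. In addition, $f$ is Lipschitz in $|z|$, with constant $\max\{1/(\rho_1-\rho_0),1/(\rho_3-\rho_2)\}$; this may matter if the effective Theorem~\ref{sombra} is applied. There is no real obstacle here. The only points needing care are the strict ordering of the radii, which keeps the denominators nonzero, and the endpoint matching above.
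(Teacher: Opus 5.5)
Your proof is correct. The paper gives no proof of this proposition; it introduces it only as ``the standard complex analysis lemma'' and states it. Your radial construction $f(z)=g(|z|)$, with the four endpoint checks, is exactly the routine verification being omitted.

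One cosmetic point: to invoke the pasting lemma literally, apply it to the closed intervals $[0,\rho_0]$, $[\rho_0,\rho_1]$, $[\rho_1,\rho_2]$, $[\rho_2,\rho_3]$, $[\rho_3,\infty)$, with the affine formulas extended to their endpoints. Your limit computations are precisely what justifies that extension.
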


Let $\mathscr{P}$ be the collection of all monic, irreducible, reciprocal and non-cyclotomic polynomials with integer coefficients of degree $d$ for any integer $d\geq 2$. Then we prove the following.

\begin{theorem}\label{thm1}
Let $\mathscr{S} \subset \mathscr{P}$ be an infinite subset of $\mathscr{P}$. 
If $\displaystyle \inf_{P\in\mathscr{S}} M(P) = 1$, then there exists a sequence of pairwise distinct polynomials $\{P_\nu\}_\nu$ in $\mathscr{S}$ of degree $d_\nu$ satisfying the following properties. 
\begin{enumerate}
\item[(1)] As $\nu\to\infty$, we have $d_\nu \to \infty$.
\item[(2)] For any continuous function $\varphi$ with compact support on $\mathbb{C}$, we have 
$$
\lim_{\nu\to\infty}\frac{1}{d_\nu}\sum_{\alpha\in \mathbb{C}, P_\nu(\alpha) =0} \varphi(\alpha) =\frac{1}{2\pi}\int_0^{2\pi}\varphi(e^{it}) dt.
$$
\item[(3)] For any continuous function $\varphi$ with a compact support on $\mathbb{C}^2$, we have 
$$
\lim_{\nu\to\infty}\int_{\mathbb{C}^2} \varphi d\mu_\nu = \int_{\mathbb{S}^1\times \mathbb{S}^1} \varphi d\lambda
$$
where $\mu_\nu$ is the uniform probability measure on the Galois orbits of $(\alpha_\nu, \alpha_\nu^{d_\nu})$ for a root $\alpha_\nu$ of $P_\nu$ of degree $d_\nu$ and $\lambda$ is a normalized Haar measure on $\mathbb{S}^1\times \mathbb{S}^1$. 
\end{enumerate}
\end{theorem}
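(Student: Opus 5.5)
Since $\inf_{P\in\mathscr{S}}M(P)=1$, I first choose pairwise distinct $P_\nu\in\mathscr{S}$ with $M(P_\nu)\to 1$. For a root $\alpha_\nu$ of $P_\nu$, monicity and irreducibility give $h(\alpha_\nu)=\log M(P_\nu)/d_\nu\le \log M(P_\nu)\to 0$.

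For (1), I argue by Northcott. If infinitely many $d_\nu$ were bounded by some $D$, then $M(P_\nu)$ would be bounded by some $M_0$. The coefficients of $P_\nu$ are bounded by $\binom{D}{j}M_0$, so only finitely many polynomials can occur. This contradicts pairwise distinctness, so after passing to a subsequence, $d_\nu\to\infty$.

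For (2), I apply Theorem \ref{bilu} with $N=1$ to $\xi_\nu=\alpha_\nu$. Strictness in $\mathbb{G}_m$ means only finitely many $\xi_\nu$ lie in each proper algebraic subgroup $\mu_n$. This holds because no $\alpha_\nu$ is a root of unity, as $P_\nu$ is irreducible and non-cyclotomic. Together with $h(\alpha_\nu)\to0$, this gives \eqref{eq2.10000}, which is exactly (2).

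For (3), I apply Theorem \ref{bilu} with $N=2$ to $\xi_\nu=(\alpha_\nu,\alpha_\nu^{d_\nu})$. The height condition is clear: $h(\xi_\nu)\le h(\alpha_\nu)+h(\alpha_\nu^{d_\nu})=(1+d_\nu)h(\alpha_\nu)$. But $d_\nu h(\alpha_\nu)=\log M(P_\nu)\to 0$, so $h(\xi_\nu)\to 0$. This is exactly why the exponent is $d_\nu$ and no larger.

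Strictness is the main obstacle. Proper algebraic subgroups of $\mathbb{G}_m^2$ lie in sets $\{x^ay^b=1\}$ with $(a,b)\ne(0,0)$, so $\xi_\nu$ lies in such a subgroup iff $\alpha_\nu^{a+bd_\nu}=1$. Since $\alpha_\nu$ is not a root of unity, this forces $a+bd_\nu=0$. For fixed $(a,b)$ this happens for at most one value of $d_\nu$ when $b\neq0$, and never when $b=0$. Because $d_\nu\to\infty$, only finitely many $\nu$ give that value, since passing to a subsequence with strictly increasing $d_\nu$ makes the $d_\nu$ distinct.

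A subgroup is contained in finitely many such sets, or in one if it has codimension $1$. Hence each proper subgroup contains only finitely many $\xi_\nu$, and the sequence is strict. Theorem \ref{bilu} then gives convergence of the Galois-orbit measures $\mu_\nu$ to Haar measure $\lambda$ on $\mathbb{S}^1\times\mathbb{S}^1$, which is (3). Finally, I note that the Galois orbit of $(\alpha_\nu,\alpha_\nu^{d_\nu})$ is $\{(\beta,\beta^{d_\nu}):P_\nu(\beta)=0\}$, which has exactly $d_\nu$ elements because the first coordinate already determines the point.
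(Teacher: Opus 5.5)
Your proof is correct and follows essentially the same route as the paper: a Northcott-type coefficient bound for (1), Bilu's theorem with $N=1$ for (2), and Bilu's theorem with $N=2$ applied to $(\alpha_\nu,\alpha_\nu^{d_\nu})$ for (3), with strictness checked through the characters $z^aw^b$ and the fact that $\alpha_\nu$ is not a root of unity. The differences are only cosmetic. You bound the height of $(\alpha_\nu,\alpha_\nu^{d_\nu})$ by $h(\alpha_\nu)+h(\alpha_\nu^{d_\nu})$, whereas the paper computes the projective height exactly as $\log M(P_\nu)$, and you rightly skip the paper's reciprocity-based annulus cutoff, which statement (2) does not need since it only involves compactly supported test functions.
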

\begin{proof}
Since $\displaystyle\inf_{P\in \mathscr{S}} M(P) = 1$, there is a sequence of pairwise distinct polynomials $(P_\nu)_\nu$ in $\mathscr{S}$ with
$H_\nu=\log M(P_\nu)\to0$ as $\nu\to \infty$. First note that this sequence has infinitely many elements of $\mathscr{S}$. If possible, assume that  it has only finitely many elements.  Then as each one is not cyclotomic, we would get $M(P_\nu)> 1$ and so is the infimum, a contradiction.

Now, we prove that $d_\nu \to \infty$ as $\nu\to \infty$. Suppose instead that for some integer $D$ there are infinitely many indices with $d_\nu\leq D$, and pass to that subsequence. If $P_\nu(x) = x^{d_\nu}+a_{d_\nu-1}x^{d_\nu-1}+\cdots+a_1x+1$ is an element of this subsequence, then 
for any $j \in \{1,2,\ldots, d_\nu-1\}$, we see that 
$$|a_j| \leq {{d_\nu} \choose j} M(P_\nu)^j \leq 2^{d_\nu}M(P_\nu)^{d_\nu}  \leq (2M(P_\nu))^{D}.$$
Since $M(P_\nu) \to 1$, we can assume that $M(P_\nu) \leq 2$ for all $\nu$ and hence we get 
$$
|a_j| \leq 4^D$$
for all $j$. Since $a_j\in \mathbb{Z}$,  only finitely many integer coefficient vectors satisfying the above inequality and hence we can produce only finitely many polynomials.  Hence only finitely many distinct polynomials can occur in this subsequence. Since every such polynomial has Mahler measure greater than $1$, this contradicts $M(P_\nu)\to1$. Thus, $d_\nu \to\infty$ as $\nu\to\infty$. This proves the first part of the assertion. 

\bigskip

For each $\nu\geq 1$, let $\alpha_\nu$ be a root of $P_\nu$ in the above sequence such that $|\alpha_\nu| > 1$. If $\beta$ is any other Galois conjugate of $\alpha_\nu$, then we see that 
$$
\frac{1}{M(P_\nu)} \leq |\beta| \leq M(P_\nu) \iff e^{-H_\nu}\leq |\beta| \leq e^{H_\nu},
$$
using the reciprocal nature of $P_\nu$.  Since $H_\nu\to 0$ as $\nu\to\infty$,  we obtain a compact annulus in $\mathbb{C}$ which contains all the Galois conjugates of $\alpha_\nu$ for all large enough $\nu$'s. Since $P_\nu$ is monic with integer coefficients, we see that the logarithmic absolute Weil height of $\alpha_\nu$ is $H_\nu/d_\nu$ which tends to $0$ as $\nu\to\infty$. Fix $0<\rho_0<\rho_1<1<\rho_2<\rho_3$. Since $H_\nu\to0$, for every sufficiently large $\nu$ all conjugates of
$\alpha_\nu$ belong to $\{\rho_1\le|z|\le \rho_2\}$ and by Proposition \ref{pro2.10000}, we get a continuous function $f$ on $\mathbb{C}$ such that $f$   is $1$ on the annulus $\rho_1\leq |z|\leq \rho_2$ and compactly supported on $\mathbb{C}$. Let $N_0$ be such that all the conjugates of $\alpha_\nu$ lie in $\{\rho_1\leq |z|\leq \rho_2\}$ for $\nu\geq N_0$, and consider the subsequence $(P_\nu)_{\nu\geq N_0}$ of $(P_\nu)_\nu$. Now, if $g$ is any continuous function on $\mathbb{C}$, then $g f$ is a continuous function with compact support and by Theorem \ref{bilu} and \eqref{eq2.10000}, we get the second assertion.

\bigskip

To prove the third assertion, let $\alpha_\nu$ be a root of $P_\nu$ and define $\xi_\nu = (\alpha_\nu, \alpha_\nu^{d_\nu})\in \overline{\mathbb{Q}}^2$. Then the logarithmic absolute Weil height 
$$
h(\alpha_\nu) = \frac{\log M(P_{\nu})}{d_\nu}=\frac{H_{\nu}}{d_{\nu}}
$$
and hence 
$$
h(\alpha_\nu^{d_\nu}) = d_\nu h(\alpha_\nu)
=\log M(P_\nu).
$$
Now, the projective height of $\xi_\nu$ is 
$$
h(1:\alpha_\nu:\alpha_\nu^{d_\nu}) = \frac{1}{[K:\mathbb{Q}]}\sum_{v\in M_K} n_v \log \max\{1, |\alpha_\nu|_v, |\alpha_\nu^{d_\nu}|_v\},  
$$
where  $K = \mathbb{Q}(\alpha_\nu)$ and $n_v$ is the local degree, if $v$ is non-archimedean, or $1$ if $v$ is a real embedding and $2$ if $v$ is a complex embedding. Since $\alpha_\nu$ is an algebraic integer, we see that 
$\log \max\{1, |\alpha_\nu|_v, |\alpha_\nu^{d_\nu}|_v\} = 0$ for all non-archimedean $v$'s. By setting $M_\infty$ as the set of all archimedean places of $K$,  we get,
$$
h(1:\alpha_\nu:\alpha_\nu^{d_\nu}) = \frac{1}{[K:\mathbb{Q}]}\sum_{v\in M_\infty} n_v \log \max\{1, |\alpha_\nu|_v, |\alpha_\nu^{d_\nu}|_v\} = d_\nu h(\alpha_\nu) = \log M(P_\nu).  
$$
Thus, the projective height of $\xi_\nu$ tends to $0$ as $\nu\to\infty$. Since $\xi_{\nu} =(\alpha_{\nu} , \alpha^{d_\nu}_{\nu})$, if $\xi_{\nu}$ belongs to the kernel of a fixed nontrivial character $z^{r}w^{s}$  then $\alpha^{r+sd_{\nu}}_{\nu}=1.$ Since $\alpha_{\nu}$ is not a root of unity , $r+sd_{\nu}=0$, which can occur only for at most one degree as $d_{\nu} \rightarrow \infty$. Therefore, by Theorem \ref{bilu}, we get the third assertion. 
\end{proof}

\begin{remark}
If one wants to solve Lehmer's problem,  one can take $\mathscr{P}$ in Theorem \ref{thm1}. By assuming the infimum is $1$,     we can get the assertion of Theorem \ref{thm1}. To get a contradiction, we need to manipulate both sides of the integrals. This is the approach we have taken for $\mathscr{P}_{m,r}$ and $\mathscr{P}_{m, a, b}$. 
\end{remark}

\section{Proof of Theorem \ref{thm9.1} and Corollary \ref{thm9.1-cor1}} \label{sec:9}

First we prove some lemmas before we prove Theorem \ref{thm9.1} and Corollary \ref{thm9.1-cor1}.

\begin{lemma}[The class $r=0$]\label{lem:zero}
Let $m\geq2$ be an integer, and let $P\in\mathbb{Z}[x]$ be  a monic, reciprocal polynomial of degree $d$, with no
cyclotomic factors in it.  If $P(x)\equiv x^d+1\pmod m$,  then
\begin{equation}\label{eq:3.1}
M(P)\geq {\sqrt{m^2+1}\over2}>1.
\end{equation}
\end{lemma}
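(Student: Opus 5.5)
The plan is to follow the resultant method of P. Borwein \emph{et al.}, using Lemma~\ref{lem:resultant-mahler} as the upper bound and divisibility by a power of $m$ as the lower bound.

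\emph{Setup.} Write $P(x)=x^d+1+mG(x)$ with $G\in\mathbb{Z}[x]$ of degree $<d$. Let $\alpha$ be any root of $P$. Then $\alpha^d+1=-mG(\alpha)$, so $\alpha^d+1$ is $m$ times an algebraic integer.

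\emph{First bound.} Take $H(x)=x^d+1$. Then
$$\operatorname{Res}(P,H)=\prod_{P(\alpha)=0}(\alpha^d+1)=(-m)^d\operatorname{Res}(P,G).$$
This is nonzero, because every root of $x^d+1$ is a root of unity and $P$ has no cyclotomic factors. Hence $|\operatorname{Res}(P,H)|\ge m^d$. Lemma~\ref{lem:resultant-mahler}, with $n=h=d$ and $\|x^d+1\|_\infty=2$, gives $m^d\le 2^dM(P)^d$. So $M(P)\ge m/2$.

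\emph{Sharpening to $\sqrt{m^2+1}/2$.} I will use auxiliary polynomials built from $x^d+1$ and the reciprocal structure. The general principle is this. Suppose $H_k(x)=\sum_j c_j\,m^{k-j}x^{e_j}(x^d+1)^j$ with integers $c_j$. Then $H_k(\alpha)\in m^k\mathcal{O}$, so $|\operatorname{Res}(P,H_k)|\ge m^{kd}$ whenever the resultant is nonzero. On $|z|=1$, write $z^d+1=z^{d/2}u$ with $u=2\cos(d\theta/2)\in[-2,2]$. Then $\|H_k\|_\infty$ is the sup norm on $[-2,2]$ of a one-variable polynomial $\sum_j c_jm^{k-j}u^j$, up to unimodular factors, provided the monomials $x^{e_j}$ are chosen as suitable powers of $x^{d/2}$. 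Lemma~\ref{lem:resultant-mahler} then yields
$$M(P)\ge \frac{m}{\|H_k\|_\infty^{1/k}},$$
because the degree of $H_k$ is about $kd$. So the problem reduces to a weighted Chebyshev-type extremal problem: minimize $\max_{|u|\le 2}\bigl|\sum_j c_jm^{k-j}u^j\bigr|^{1/k}$ over integer $c_j$ with $c_k=1$, and let $k\to\infty$ as in Lemma~\ref{lem:length-limit}. I expect this extremal quantity to tend to $2m/\sqrt{m^2+1}$. Equivalently, following Borwein \emph{et al.}, one can pass to $F(x)=P(x)^2$, or to combinations such as $(x^d+1)^2-m\,x^{d/2}(x^d+1)\cdot(\text{integer})$, and read off the constant $\sqrt{m^2+1}/2$. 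This is the same constant appearing in Theorem~\ref{borwein} for $m\ge3$. The heuristic is that the relevant extremal value $\bigl(\tfrac{\sqrt{m^2+1}}{2}\bigr)$ is that of the length-limit argument applied to the polynomial $x^d+1$ perturbed by $m$-multiples.

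\emph{Nonvanishing.} At each stage I must check that $\operatorname{Res}(P,H_k)\ne0$. The natural route is to choose $H_k$ whose roots are all on the unit circle and, where possible, roots of unity. Then nonvanishing again follows from the absence of cyclotomic factors. If some roots are not roots of unity, irreducibility or a perturbation argument is needed.

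\emph{Main obstacle.} The crude bound $m/2$ is routine. The hard step is identifying the auxiliary family that produces exactly $\sqrt{m^2+1}/2$ while keeping the resultants nonzero and divisible by the full power of $m$. Integrality of the $c_j$ prevents the obvious real Chebyshev optimization. My first attempt would be to transcribe the Borwein \emph{et al.} argument behind Theorem~\ref{borwein}, replacing the cyclotomic reference polynomial $1+x+\cdots+x^d$ by $x^d+1$, and to check that the same extremal computation goes through unchanged.
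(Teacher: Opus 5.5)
Your first step is correct, but it only gives $M(P)\ge m/2$. That is strictly weaker than the claim, and for $m=2$ it gives nothing: it does not even show $M(P)>1$. Everything after that is heuristic. You set up an extremal problem over $H_k=\sum_j c_j m^{k-j}x^{e_j}(x^d+1)^j$ with $c_k=1$ and say you \emph{expect} its value to be $2m/\sqrt{m^2+1}$. You do not exhibit any member of the family that reaches this, and you do not prove the limit. You also leave the nonvanishing of $\operatorname{Res}(P,H_k)$ open. Your $H_k$ will typically have unimodular roots that are not roots of unity, and $P$ is not assumed irreducible, so this is a real issue. As written, the statement is not proved.

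The missing idea is that a resultant does not need to carry any power of $m$ to be useful. Since $P$ has no cyclotomic factor, $\operatorname{Res}(P,x^d-1)$ is a nonzero integer, so $|\operatorname{Res}(P,x^d-1)|\ge1$. Weighting this against $|\operatorname{Res}(P,x^d+1)|\ge m^d$ gives
\[
m^{dm^2}\le|\operatorname{Res}(P,x^d-1)|\,|\operatorname{Res}(P,x^d+1)|^{m^2}=\prod_{P(\alpha)=0}|T(\alpha^d)|,\qquad T(y)=(1-y)(1+y)^{m^2}.
\]
The maximum principle gives $|T(\alpha^d)|\le\|T\|_\infty\max\{1,|\alpha|\}^{d(m^2+1)}$. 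On the circle $|T(e^{it})|=2^{m^2+1}|\sin(t/2)|\,|\cos(t/2)|^{m^2}$, whose maximum is $2^{m^2+1}m^{m^2}(m^2+1)^{-(m^2+1)/2}$. Together these give $M(P)\ge\sqrt{m^2+1}/2$ at once, and $m^2$ is the optimal exponent among weights $(1-y)(1+y)^n$. Nonvanishing is automatic here, because every root of $x^d\pm1$ is a root of unity.

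The factor $1-y$ lowers the sup norm without contributing any $m$-divisibility, and it lies outside your family. First, $(1-y)(1+y)^{m^2}=2(1+y)^{m^2}-(1+y)^{m^2+1}$ has degree larger than the power of $m$ it carries. Second, on the circle $|1-y|=\sqrt{4-u^2}$ is not a polynomial in $u=2\cos(d\theta/2)$. Whether your integer Chebyshev problem has the value you predict is a separate, nontrivial question that nothing in the proposal settles.
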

\begin{proof}
Since $P(x)$ has no cyclotomic factors in it, we first note that both resultants
$\operatorname{Res}(P,x^d-1)$ and $\operatorname{Res}(P,x^d+1)$ are nonzero integers. 
Since $P(x) \equiv x^d+1\pmod{m}$, we see that $P(x) = (x^d+1) + m S(x)$ for some polynomial $S(x)\in \mathbb{Z}[x]$. 
Using $P(x)=(x^d+1)+mS(x)$ and the resultant identity for monic polynomials, we obtain 
$$
m^d\mid\operatorname{Res}(P,x^d+1) =\operatorname{Res}(mS(x),x^d+1).
$$
Put $D=m^2+1$ and $G(y)=(1-y)(1+y)^{m^2}.$  Consequently
\begin{equation}\label{eq:3.2}
m^{dm^2}\leq
 |\operatorname{Res}(P,x^d-1)|\,|\operatorname{Res}(P,x^d+1)|^{m^2}
 =\prod_{P(\alpha)=0}|G(\alpha^d)|.
\end{equation}
For every polynomial $H$ of degree $D$ and every $z\in\mathbb{C}$,
\begin{equation}\label{eq:3.3}
|H(z)|\leq\max_{|w|=1}|H(w)|\max\{1,|z|\}^D.
\end{equation}
Put $w=e^{it}$ in $G(w)$. Then $|G(e^{it})| = |(1-e^{it})(1+e^{it})^{m^2}| = 2^{m^2+1}|\sin(t/2)||\cos(t/2)|^{m^2}$.  By maximizing, 
  one obtains
\begin{equation}\label{eq:3.4}
\max_{|w|=1}|G(w)|
 ={2^{m^2+1}m^{m^2}\over(m^2+1)^{(m^2+1)/2}}.
\end{equation}
From the equations \eqref{eq:3.2}--\eqref{eq:3.4}, we infer that 
$$
m^{dm^2}\leq  \left({2^Dm^{m^2}\over D^{D/2}}\right)^d M(P)^{dD} =  \left({2m^{m^2/D}\over D^{1/2}}\right)^{dD} M(P)^{dD}.$$
By taking the power $1/(dD)$ both sides, we get  $M(P)\geq\sqrt D/2$.
\end{proof}

\begin{remark} 
 In Lemma  \ref{lem:zero}, put $m =2$. Then for an even integer $d\geq 2$, the polynomial $P(x) = x^d+2x^{d-1}+\cdots+2x+1$ is always reducible over $\mathbb{Q}$ and it is a product of cyclotomic polynomials and hence $M(P) = 1$. The displayed comparison polynomial is cyclotomic; this observation does not exclude other non-cyclotomic polynomials in the same residue class when $m=2$. 
\end{remark}

For an integer $m\geq 2$, we assume that $r$ satisfies $1\leq r\leq m-1$. Let
\begin{equation}\label{eq:1.1}
P(x)=x^d+a_{d-1}x^{d-1}+\cdots+a_1x+1\in\mathbb{Z}[x]
\end{equation}
be a monic, reciprocal and irreducible polynomial of degree $d\geq 2$ which is not cyclotomic.  For a root $\alpha$ of $P$,
the absolute logarithmic Weil height is
\begin{equation}\label{eq:1.3}
h(\alpha)={{\log M(P)}\over d}.
\end{equation}
Throughout the section the following assumptions are in force unless a statement explicitly says otherwise:
\begin{itemize}
\item $m\geq2$ and $1\leq r<m$ are fixed;
\item $d\geq2$ is even;
\item $P$ has the form \eqref{eq:1.1}, is reciprocal and irreducible over $\mathbb{Q}$;
\item $a_j\equiv r\pmod m$ for every $1\leq j\leq d-1$;
\item $P$ is not cyclotomic.
\end{itemize}
Choose an integer $c$ satisfying
\begin{equation}\label{eq:1.4}
c\equiv r-1\pmod m,\qquad |c|=\min_{t\in\mathbb{Z}}|r-1+tm|.
\end{equation}
When $m$ is even and $r-1\equiv m/2\pmod m$, the two minimum-length representatives are
$\pm \frac{m}{2}$, either may be chosen.  Put
\begin{equation}\label{eq:1.5}
\delta=|c|.
\end{equation}
Thus $0\leq\delta\leq m/2$.  Define the comparison polynomial
\begin{equation}\label{eq:1.6}
R_{d,c}(x)=x^d+(1+c)(x^{d-1}+\cdots+x)+1
\end{equation}
and the polynomial
\begin{equation}\label{eq:1.7}
A_{d,c}(x)=(x-1)R_{d,c}(x) =x^{d+1}+cx^d-cx-1 = x^d(x+c) - (1+cx).
\end{equation}

\begin{lemma}\label{lem:reslower}
Assume $P$, $R_{d,c}$, and $A_{d,c}$ as above such that $P\neq R_{d,c}$.  Then
\begin{equation}\label{eq:3.5}
|\operatorname{Res}(P,R_{d,c})|\geq m^d,\qquad
 |\operatorname{Res}(P,A_{d,c})|\geq m^d.
\end{equation}
\end{lemma}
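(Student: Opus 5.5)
The plan is to copy the argument used for Lemma~\ref{lem:zero}: show that the resultant is a nonzero integer divisible by $m^d$. Since the absolute value of a nonzero multiple of $m^d$ is at least $m^d$, this gives \eqref{eq:3.5}.

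\emph{Step 1: the congruence.} Since $a_j\equiv r\pmod m$ and $1+c\equiv r\pmod m$, every coefficient of $P-R_{d,c}$ is divisible by $m$. Both polynomials have leading and constant coefficient $1$, so
$$P(x)=R_{d,c}(x)+mS(x)$$
with $S\in\mathbb{Z}[x]$ and $\deg S\le d-1$.

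\emph{Step 2: divisibility.} Both $P$ and $R_{d,c}$ are monic of degree $d$. Let $\beta_1,\dots,\beta_d$ be the roots of $R_{d,c}$. Then
$$\operatorname{Res}(P,R_{d,c})=\pm\prod_{i=1}^d P(\beta_i)=\pm\prod_{i=1}^d mS(\beta_i)=\pm m^d\prod_{i=1}^d S(\beta_i).$$
The product $\prod_i S(\beta_i)$ is symmetric in the $\beta_i$. It is therefore a polynomial with integer coefficients in the elementary symmetric functions of the $\beta_i$, and these are integers because $R_{d,c}$ is monic in $\mathbb{Z}[x]$. Equivalently, $\prod_i S(\beta_i)=\pm\operatorname{Res}(R_{d,c},S)\in\mathbb{Z}$. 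Hence $m^d\mid\operatorname{Res}(P,R_{d,c})$.

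For $A_{d,c}$, multiplicativity of the resultant gives
$$\operatorname{Res}(P,A_{d,c})=\operatorname{Res}(P,x-1)\operatorname{Res}(P,R_{d,c}),$$
up to sign. Since $\operatorname{Res}(P,x-1)=\pm P(1)\in\mathbb{Z}$, this resultant is also divisible by $m^d$.

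\emph{Step 3: nonvanishing.} This is the main point. $P$ is irreducible and $P\ne R_{d,c}$, and both are monic. So $\operatorname{Res}(P,R_{d,c})=0$ would force $P\mid R_{d,c}$. The degrees are equal, so we would get $P=R_{d,c}$, which is excluded. Hence $\operatorname{Res}(P,R_{d,c})\neq 0$.

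For $A_{d,c}$ we also need $P(1)\ne0$. This holds because $P$ is irreducible and non-cyclotomic, so it is not $x-1$; also $d\ge2$ rules out a linear factor.

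\emph{Conclusion and caveat.} A nonzero integer divisible by $m^d$ has absolute value at least $m^d$, which gives \eqref{eq:3.5}. The only delicate point is the equal-degree step in Step 3: when $\deg R_{d,c}=d$ and $P$ is irreducible of degree $d$, a common root forces $P=R_{d,c}$. This is exactly why the hypothesis $P\ne R_{d,c}$ is needed, and no other property of $R_{d,c}$ is required.
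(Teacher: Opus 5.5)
Your proof is correct and follows essentially the same route as the paper. Both write $P=R_{d,c}+mS$, pull $m^d$ out of the resultant, get nonvanishing from the irreducibility of $P$, and then handle $A_{d,c}=(x-1)R_{d,c}$ using $P(1)\neq 0$. The only cosmetic difference is where you evaluate: the paper takes the product over the roots of $P$ and gets nonvanishing from $P\nmid S$ (since $S\neq 0$ and $\deg S<d$), whereas you take it over the roots of $R_{d,c}$ and get nonvanishing from $P\nmid R_{d,c}$.
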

\begin{proof}
Since $P(x) \ne R_{d,c}(x)$ and by hypothesis, we get   $P(x) -R_{d,c}(x)=mS(x)$
for a nonzero $S\in\mathbb{Z}[x]$ of degree less than $d$.  Since $P$ is
irreducible, $P$ and $S$ are coprime.  Therefore
$$|\operatorname{Res}(P,R_{d,c})| =m^d|\operatorname{Res}(P,S)|\geq m^d.$$
Since $d\geq2$ and $P$ is irreducible, $P(1)\ne 0$.   Since
$A_{d,c}=(x-1)R_{d,c}$, we get the   second inequality, which proves the lemma. 
\end{proof}

We shall give a lower bound for the Mahler measure for $R_{d,c}(x)$ as follows.

\begin{lemma}\label{lem:newrdc}
With the same notations as above, if $R_{d,c}(x)$ is irreducible and non-cyclotomic with $|c|\geq 2$, then $M(R_{d,c})\geq 3/2$.
\end{lemma}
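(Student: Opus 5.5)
The plan is to avoid any equidistribution or resultant machinery and simply exhibit a real root of $R_{d,c}$ of absolute value at least $3/2$. Since $M(R_{d,c})\ge\max\{1,|\beta|\}$ for every root $\beta$, such a root gives the bound at once. The roots of $R_{d,c}$ are exactly the roots of $A_{d,c}(x)=x^{d+1}+cx^d-cx-1$ other than $x=1$. So it suffices to locate a real root of $A_{d,c}$ by sign changes, in an interval that stays away from $1$. I split according to the sign of $c$ and use that $d$ is even.

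\emph{Case $c\ge 2$.} As in the proof of Proposition \ref{prop:root-distribution}, we have $A_{d,c}(-c)=c^2-1>0$. Also $A_{d,c}(x)\to-\infty$ as $x\to-\infty$, because $d+1$ is odd. Hence $A_{d,c}$ has a root $\rho<-c\le -2$, and $\rho\neq1$ is a root of $R_{d,c}$. So $M(R_{d,c})>c\ge2>3/2$. This covers $c=2$ as well, which Proposition \ref{prop:root-distribution} (stated for $A\ge4$) does not include.

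\emph{Case $c=-k$ with $k\ge2$.} Write $A_{d,c}(x)=x^d(x-k)+(kx-1)$, so that $A_{d,c}(k)=k^2-1>0$.
\begin{itemize}
\item For $d\ge4$, I evaluate at $x_0=3/2$ and use $(3/2)^d\ge 81/16$:
$$A_{d,c}(3/2)\le \tfrac{81}{16}\bigl(\tfrac32-k\bigr)+\tfrac{3k}{2}-1.$$
This is negative for $k\ge2$, since the condition reduces to roughly $3.56\,k>6.6$. The intermediate value theorem then gives a root in $(3/2,k)$, hence $M(R_{d,c})>3/2$.
\item For $d=2$, $R_{2,c}(x)=x^2-(k-1)x+1$. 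When $k=2$ this is $x^2-x+1$, which is cyclotomic. When $k=3$ it is $(x-1)^2$, which is reducible. Both are excluded by hypothesis. For $k\ge4$ the larger root is at least $(3+\sqrt5)/2>3/2$.
\end{itemize}

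The only step needing care is the negative-$c$ case with small $d$, where the crude estimate at $x_0=3/2$ must be checked and the degenerate quadratics must be discarded using the irreducibility and non-cyclotomic hypotheses. Everything else is a direct sign-change argument.
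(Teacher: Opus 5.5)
Your proof is correct and follows the paper's argument essentially step for step: the sign change of $A_{d,c}$ at $-c$ when $c\ge 2$, the evaluation at $x_0=3/2$ when $c\le -2$ and $d\ge 4$, and a separate treatment of $d=2$ via the bound $(3+\sqrt5)/2$ for quadratics. The differences are cosmetic: you bracket the root using $A_{d,c}(k)=k^2-1>0$ rather than the behaviour of $A_{d,c}$ at $+\infty$, and you list the excluded quadratics $x^2-x+1$ and $(x-1)^2$ explicitly.
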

\begin{proof}
If $c\geq2$, then $A_{d,c}(-c)=c^2-1>0$. Since $A_{d,c}(x)\to -\infty$ as
$x\to-\infty$,  we conclude that $R_{d,c}$ has a root whose modulus  is greater than $c$.

If $c\leq-2$ and $d\geq4$, then $A_{d,c}(3/2)<0$. For, by writing   $c=-b$ with $b\geq2$,
and $x_0=3/2$, one has
$$
A_{d,-b}(x_0)=(x_0^{d+1}-1)-b(x_0^d-x_0)  \leq(x_0^{d+1}-1)-2(x_0^d-x_0)<0,
$$
because $2(x_0^d-x_0)-(x_0^{d+1}-1)=x_0^d/2-2>0$ for $d\geq4$.
Since  $A_{d,c}(x)\to+\infty$ as $x\to+\infty$, we see that $R_{d,c}$ has a root
greater than $3/2$.  When $d=2$,  the non-cyclotomic irreducible polynomials of degree $2$ 
  have Mahler measure at least $(3+\sqrt5)/2$.  Thus in every
case with $|c|\geq2$ and $R_{d,c}$ irreducible and non-cyclotomic, we see that
$M(R_{d,c})\geq3/2$.
\end{proof}
The following estimate is the point where reciprocal nature of $P$ is important.

\begin{lemma}\label{lem:pair}
Let $|c|=\delta\geq2$, let $P$ and $A_{d,c}$ be as above, and let $H=\log M(P)$.  If $\log M(P)=H<\log(3/2)$, then
\begin{equation}\label{eq:4.1}
\log m\leq {1\over d}\log|\operatorname{Res}(P,A_{d,c})|
 \leq\left(1+{8\over d}\right)H
       +{1\over d}\sum_{P(\beta)=0}\log|\beta+c|+\log2.
\end{equation}
\end{lemma}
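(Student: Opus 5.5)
The first inequality in \eqref{eq:4.1} is exactly Lemma~\ref{lem:reslower}: since $P$ is irreducible of degree $d\ge 2$ and $P\neq R_{d,c}$, we have $|\operatorname{Res}(P,A_{d,c})|\ge m^d$, and we take $\frac1d\log$. The case $P=R_{d,c}$ has to be excluded separately. There Lemma~\ref{lem:newrdc} gives $M(P)\ge 3/2$, contradicting $H<\log(3/2)$. So the real work is the upper bound. Since $P$ is monic, $\operatorname{Res}(P,A_{d,c})=\prod_{P(\beta)=0}A_{d,c}(\beta)$, and by \eqref{eq:1.7}
$$A_{d,c}(\beta)=\beta^d(\beta+c)-(1+c\beta).$$

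\textbf{Step 1 (confining the roots).} Since $P$ is reciprocal, every root satisfies $e^{-H}\le|\beta|\le e^{H}$. As $H<\log(3/2)$, all roots lie in $2/3<|\beta|<3/2$. Also $\sum_\beta|\log|\beta||=2H$ and $\sum_\beta\log^+|\beta|=H$.

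\textbf{Step 2 (pointwise bound).} For each root $\beta$, the triangle inequality gives
$$|A_{d,c}(\beta)|\le 2\max\{1,|\beta|\}^d\max\{|\beta+c|,|1+c\beta|\}.$$
The key identity, using that $c$ is real, is
$$|1+c\beta|^2-|\beta+c|^2=(c^2-1)(|\beta|^2-1).$$
If $|\beta|\le 1$, this shows the maximum is $|\beta+c|$. If $|\beta|>1$, it shows
$$\log\frac{|1+c\beta|}{|\beta+c|}\le \frac{(c^2-1)(|\beta|^2-1)}{2|\beta+c|^2}.$$
Here $|\beta+c|\ge \delta-|\beta|$ stays away from $0$ because $\delta\ge 2$ and $|\beta|<3/2$. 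Together with $|\beta|^2-1\le C\log|\beta|$ on $1<|\beta|<3/2$, this should give an estimate $\log\max\{|\beta+c|,|1+c\beta|\}\le\log|\beta+c|+C'\log^+|\beta|$.

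\textbf{Step 3 (summing).} Summing over the $d$ roots gives
$$\log|\operatorname{Res}(P,A_{d,c})|\le d\log 2+dH+\sum_\beta\log|\beta+c|+C'H.$$
Dividing by $d$ yields \eqref{eq:4.1}, provided $C'\le 8$.

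\textbf{Main obstacle.} The hard part is controlling the constant $C'$ uniformly in $\delta\ge2$. The crude estimate $|\beta+c|\ge\delta-3/2$ is weak when $\delta=2$. If it is not good enough, I would exploit reciprocity more fully. By \eqref{eq:anti-reciprocal}-type anti-reciprocity, $|A_{d,c}(1/\beta)|=|\beta|^{-d-1}|A_{d,c}(\beta)|$. So for each pair $\{\beta,1/\beta\}$ with $|\beta|>1$, I would bound only $|A_{d,c}(\beta)|\le|\beta|^d(|\beta+c|+|\beta|^{1-d}|c+\beta^{-1}|)$. I would then compare the pair contribution against $\log|\beta+c|+\log|\beta^{-1}+c|$, so that the off-circle defect is absorbed into the $8H/d$ term.
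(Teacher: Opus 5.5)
You have the right structure, and it is the paper's argument without the pairing. For $|\beta|\le1$ your identity gives $\log|A_{d,c}(\beta)|\le\log2+\log|\beta+c|$. For $|\beta|>1$ it gives $\log|A_{d,c}(\beta)|\le\log2+d\log|\beta|+\log|\beta+c|+g(\beta)$, where $g(\beta)=\log\frac{|1+c\beta|}{|\beta+c|}$. The paper's use of $A_{d,c}(\alpha^{-1})=-\alpha^{-(d+1)}A_{d,c}(\alpha)$ reproduces exactly these two per-root bounds, so the pairing is cosmetic.

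The lemma therefore reduces entirely to $g(\beta)\le 8\log|\beta|$ for $1<|\beta|<3/2$, uniformly in $\delta\ge2$. That is the step your proposal leaves unproved, and the estimate you write down fails. Using $\log(1+t)\le t$, $|\beta+c|\ge\delta-\tfrac32$ and $|\beta|^2-1\le(1.25/\log1.5)\log|\beta|$, at $\delta=2$ you get $C'\approx 3\cdot3.08/(2\cdot\tfrac14)\approx18.5$. The lossy step is the linearization $\log(1+t)\le t$: when $\delta=2$ and $\beta$ is near $-\tfrac32\operatorname{sgn}(c)$, $t=(c^2-1)(|\beta|^2-1)/|\beta+c|^2$ is close to $15$. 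Your reciprocity fallback is only a sketch. You never check that the pair contribution exceeds $\log|\beta+c|+\log|\beta^{-1}+c|$ by at most $2\log2+(d+8)\log|\beta|$, and that check is the whole content of the lemma.

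The gap closes if you use your own identity exactly instead of linearizing it. With $\rho=|\beta|\in(1,3/2)$ and $|\beta+c|\ge\delta-\rho>0$,
\[
\frac{|1+c\beta|^2}{|\beta+c|^2}=1+\frac{(\delta^2-1)(\rho^2-1)}{|\beta+c|^2}\le 1+\frac{(\delta^2-1)(\rho^2-1)}{(\delta-\rho)^2}=\frac{(\delta\rho-1)^2}{(\delta-\rho)^2}.
\]
Hence $g(\beta)\le\phi(\log\rho)$, where $\phi(u)=\log\frac{\delta e^u-1}{\delta-e^u}$. Now $\phi(0)=0$, and for $1\le e^u\le\tfrac32$,
\[
\phi'(u)=e^u\Bigl(\frac{\delta}{\delta e^u-1}+\frac{1}{\delta-e^u}\Bigr)\le\tfrac32(2+2)=6.
\]
So $g(\beta)\le6\log|\beta|$, giving $C'\le6<8$. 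The paper reaches the same conclusion by bounding the radial derivative $\frac{d}{du}\,g(e^{u+it})$ by $7$. With this estimate inserted, your Step 3 yields \eqref{eq:4.1}.
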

\begin{proof}
For any root $\beta$ of $P$, since $M(P) < 3/2$, we see that $2/3 < |\beta|< 3/2$. Since 
$$
\operatorname{Res}(P, A_{d,c}) = \prod_{P(\beta) =0} A_{d,c}(\beta),
$$
Let $\mathcal R$ contain one representative from each reciprocal pair, with $|\alpha|\geq1$ for $\alpha\in\mathcal R$. Then we get 
\begin{align}\label{thanga-new4}
\log|\operatorname{Res}(P,A_{d,c})|
&=\sum_{P(\beta)=0}\log|A_{d,c}(\beta)|
=\sum_{\alpha\in\mathcal R}
\log|A_{d,c}(\alpha)A_{d,c}(\alpha^{-1})|.
\end{align}
$
\displaystyle\sum_{\alpha\in\mathcal R}\log^+|\alpha|
=\log M(P), |\mathcal R|=d/2.$
So, we need to estimate $\log |A_{d,c}(\beta)|$. Since 
$$
A_{d,c}(\beta) = \beta^d(\beta+c) - (1+c\beta) = \beta^d(\beta+c)\left(1 - \frac{1+c\beta}{\beta^d(\beta+c)}\right).
$$ 
Let $\alpha$ be a root of $P$ such that $|\alpha|\geq 1$. Then
$$
|A_{d,c}(\alpha)|\leq |\alpha|^d|\alpha+c|\left|1-\frac{1+c\alpha}{\alpha^d(\alpha+c)}\right|.
$$
Since $|c|\geq 2$, the ratio $(1+cz)/(z+c)=b_{-1/c}(z)$ is a Blaschke factor. Since $|\alpha|\geq 1$, we conclude that $\displaystyle \left|\frac{1+c\alpha}{\alpha+c}\right|\geq 1$  and hence 
$$
\left|\frac{1+c\alpha}{\alpha^d(\alpha+c)}\right|\leq \left|\frac{1+c\alpha}{\alpha+c}\right|.
$$
Therefore, we get
$$
\left|1-\frac{1+c\alpha}{\alpha^d(\alpha+c)}\right| \leq 1+ \left|\frac{1+c\alpha}{\alpha+c}\right| \leq 2\left|\frac{1+c\alpha}{\alpha+c}\right|.
$$
Thus, for any root $\alpha$ with $|\alpha|\geq 1$ of $P$, we have 
\begin{equation*} 
\log |A_{d,c}(\alpha)| \leq d\log |\alpha|+\log |\alpha+c| + \log 2+\log\left|\frac{1+c\alpha}{\alpha+c}\right|.
\end{equation*}
Let $f(z) = \log|z+c|$ and $g(z) = \displaystyle\log \left|\frac{1+cz}{z+c}\right|$ wherever these logarithms are defined. Then $g(z) = 0$ for all $|z| = 1$. With these notations, for any root $\alpha$ with $|\alpha|\geq 1$ of $P$, we get
\begin{equation}\label{thanga-new1}
\log |A_{d,c}(\alpha)| \leq d\log |\alpha|+f(\alpha) + \log 2+ g(\alpha). 
\end{equation}
Also, note that 
$$A_{d,c}(\alpha^{-1})=-\alpha^{-(d+1)}A_{d,c}(\alpha).$$
Hence, 
$$
\log |A_{d,c}(\alpha^{-1})| = -(d+1)\log |\alpha| + \log |A_{d,c}(\alpha)|.
$$
By the definition of $g(z)$, we see that 
\begin{eqnarray}\label{thanga-new2}
f(\alpha^{-1}) &=& \log |\alpha^{-1}+c| = \log\left|\frac{1+\alpha c}{\alpha}\right|\\\nonumber
&=&-\log|\alpha| + \log|1+\alpha c| +\log |\alpha+c| - \log|\alpha+c| \\\nonumber
&=& f(\alpha) + g(\alpha) - \log |\alpha|.
\end{eqnarray}
Therefore, by \eqref{thanga-new1} and \eqref{thanga-new2}, we get
\begin{eqnarray*}
\log|A_{d,c}(\alpha)A_{d,c}(\alpha^{-1})| &= & \log |A_{d,c}(\alpha)| + \log |A_{d,c}(\alpha^{-1})|\\\nonumber
&=& 2\log |A_{d,c}(\alpha)| - (d+1)\log |\alpha|\\\nonumber
&\leq& (d-1)\log|\alpha| +2f(\alpha) + 2\log 2+2g(\alpha)\\
&=& (d-1)\log|\alpha| + f(\alpha) +2\log 2 + 2g(\alpha) + f(\alpha^{-1}) - g(\alpha) +\log|\alpha|.
\end{eqnarray*}
Hence, we get
\begin{equation}\label{thanga-new3}
\log|A_{d,c}(\alpha)A_{d,c}(\alpha^{-1})| \leq  d\log|\alpha| + f(\alpha)+f(\alpha^{-1}) + g(\alpha) +2\log 2.
\end{equation}
Now, we estimate as $g(\alpha) \leq 8\log|\alpha|$. For, consider the radial segment $z(u) = e^{u+it}$ for all $0\leq u\leq \log |\alpha|$ so that $g(z(0)) = g(e^{it}) = 0$ and $z(\log |\alpha|) = \alpha$ and hence $g(z(\log|\alpha|))= g(\alpha)$. Therefore, 
$$
g(\alpha) = g(z(\log|\alpha|)) - g(z(0)) = \int_0^{\log |\alpha|} \frac{d}{du}\bigl(g(z(u))\bigr)\,du
$$
and hence 
$$
|g(\alpha)| \leq \int_0^{\log |\alpha|}\left|\frac{d}{du}\bigl(g(z(u))\bigr)\right|\,du.
$$
Now consider
$$
\frac{d}{du}(g(z(u))) = \frac{d}{du}\log \left|\frac{1+ce^{u+it}}{e^{u+it}+c}\right| = \Re\left(\frac{cz(u)}{1+cz(u)}\right) - \Re\left(\frac{z(u)}{c+z(u)}\right).
$$
Thus, 
$$
\left|\frac{d}{du}(g(z(u)))\right| \leq \left|\frac{cz(u)}{1+cz(u)}\right| + \left|\frac{z(u)}{c+z(u)}\right| \leq \frac{\delta |z(u)|}{\delta|z(u)| -1} + \frac{|z(u)|}{\delta-|z(u)|}.
$$
Since $|z(u)|$ satisfies $2/3 < |z(u)|<3/2$, we see that $\delta |z(u)| \geq 4/3$ and hence we get
$$
\left|\frac{d}{du}(g(z(u)))\right| \leq \left(1+ \frac{1}{\delta|z(u)| -1}\right) + \frac{|z(u)|}{\delta - |z(u)|} \leq 4+3=7<8.
$$
Thus, we get $g(\alpha) \leq 8\log |\alpha|$. Therefore, by \eqref{thanga-new3}, we get
$$
\log|A_{d,c}(\alpha)A_{d,c}(\alpha^{-1})| \leq (d+8)\log |\alpha| +f(\alpha)+f(\alpha^{-1})+2\log 2.
$$
Now, we sum over the representatives $\alpha\in\mathcal R$ to get
$$
\sum_{\alpha\in\mathcal R}\log|A_{d,c}(\alpha)A_{d,c}(\alpha^{-1})| \leq (d+8)\sum_{\alpha\in\mathcal R} \log |\alpha| +\sum_{\alpha\in\mathcal R}(f(\alpha)+f(\alpha^{-1})) + d\log 2.
$$
Since $\displaystyle \sum_{|\alpha|> 1} \log |\alpha| = \log M(P) = H$, by Lemma \ref{lem:reslower} (since by Lemma \ref{lem:newrdc}, $P\ne R_{d,c}$) and by \eqref{thanga-new4},   divide by $d$ both sides, we get the assertion.    Note that the roots on the unit circle cause no issues as they can be  paired with their inverses, and above equation holds with $|\alpha|=1$.
\end{proof}

\begin{proposition}\label{prop:qual}
Assume $\delta\geq2$ and $m>2\delta$.   For a fixed $r\mod{m}$, let 
$\mathscr{P}_{m,r}$ be the collection of all monic, reciprocal and irreducible polynomials $P(x)\in \mathbb{Z}[x]$ of degree $d$ for any integer $d\geq 2$ such that its inner coefficients are congruent to $r\pmod{m}$ and non-cyclotomic. Then 
$$
\inf_{P\in\mathscr{P}_{m,r}} M(P) > 1.
$$
\end{proposition}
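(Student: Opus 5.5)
We argue by contradiction, using Theorem \ref{thm1} together with Lemma \ref{lem:pair}. Suppose $\inf_{P\in\mathscr{P}_{m,r}}M(P)=1$. Then $\mathscr{P}_{m,r}$ is infinite, and Theorem \ref{thm1} supplies pairwise distinct $P_\nu\in\mathscr{P}_{m,r}$ with the following properties:
\begin{itemize}
\item $H_\nu=\log M(P_\nu)\to 0$;
\item the degrees satisfy $d_\nu\to\infty$;
\item the root measures $\mu_\nu=\frac{1}{d_\nu}\sum_{P_\nu(\beta)=0}\delta_\beta$ converge weakly to the Haar measure on $\mathbb{S}^1$.
\end{itemize}
Discarding finitely many terms, we may assume $H_\nu<\log(3/2)$ for all $\nu$. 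Lemma \ref{lem:newrdc} gives $P_\nu\neq R_{d_\nu,c}$, so Lemma \ref{lem:pair} applies and yields
$$
\log m\le \Bigl(1+\frac{8}{d_\nu}\Bigr)H_\nu+\frac{1}{d_\nu}\sum_{P_\nu(\beta)=0}\log|\beta+c|+\log 2 .
$$

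Next we pass to the limit in the middle term. All roots of $P_\nu$ lie in the annulus $e^{-H_\nu}\le|z|\le e^{H_\nu}$, which is contained in $\{2/3\le |z|\le 3/2\}$. On this annulus $|z+c|\ge \delta-3/2\ge 1/2$, so $\log|z+c|$ is continuous and bounded there. Multiply it by the cutoff $f$ of Proposition \ref{pro2.10000}, built with $\rho_1=2/3$, $\rho_2=3/2$ and $\rho_0,\rho_3$ chosen so that $|z+c|$ stays bounded away from $0$ on the support of $f$. For instance, take $\rho_3<\delta$; the only danger is the point $z=-c$, which has modulus $\delta\ge 2$. The product is a continuous, compactly supported test function that agrees with $\log|z+c|$ at every root of $P_\nu$.

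By Theorem \ref{thm1}(2),
$$
\frac{1}{d_\nu}\sum_{P_\nu(\beta)=0}\log|\beta+c|\longrightarrow \frac{1}{2\pi}\int_0^{2\pi}\log|e^{it}+c|\,dt=\log\max\{1,|c|\}=\log\delta ,
$$
where the last equality is Jensen's formula, valid since $|c|=\delta>1$. Letting $\nu\to\infty$ in the inequality and using $H_\nu\to0$ and $d_\nu\to\infty$, we obtain
$$
\log m\le \log\delta+\log 2=\log(2\delta).
$$
This contradicts $m>2\delta$, so the infimum exceeds $1$.

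The only delicate step is justifying the passage to the limit: weak convergence is stated for compactly supported continuous test functions, while $\log|z+c|$ has a singularity at $z=-c$. This is handled as above, by noting that all roots eventually lie in a fixed annulus well away from $-c$ (because $\delta\ge2$), so the cutoff version of $\log|z+c|$ is a legitimate test function. The hypotheses $\delta\ge 2$ and $m>2\delta$ are used exactly once each: $\delta\ge2$ to invoke Lemmas \ref{lem:newrdc} and \ref{lem:pair} and to keep $-c$ off the support of the test function, and $m>2\delta$ to obtain the final contradiction.
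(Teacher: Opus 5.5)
Your proposal is correct and follows the paper's proof essentially step for step. You argue by contradiction via Theorem~\ref{thm1}, exclude $P_\nu=R_{d_\nu,c}$ by Lemma~\ref{lem:newrdc}, apply Lemma~\ref{lem:pair} once $H_\nu<\log(3/2)$, and pass to the limit using a cutoff of $\log|z+c|$ together with Jensen's formula to reach $\log m\le\log(2\delta)$. Your explicit choice $3/2<\rho_3<\delta$ is simply a more concrete version of the paper's ``annular cutoff supported away from $-c$.''
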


\begin{proof}
Suppose $\displaystyle\inf_{P\in\mathscr{P}_{m,r}} M(P) =1$. Then by Theorem \ref{thm1} (2), 
there is a sequence of pairwise distinct polynomials $P_\nu\in \mathscr{P}_{m,r}$ with
$H_\nu=\log M(P_\nu)\to0$.  Also note that, by Lemma \ref{lem:newrdc}, we cannot have  $P_\nu=R_{d_\nu,c}$ for infinitely many $\nu$, as $M(P_\nu)\to 1$. 
Then by Theorem \ref{bilu},   we conclude that their Galois conjugates
equi-distribute on $\mathbb{S}^1$ and we invoke Theorem \ref{thm1} (2).    Choose an annular cutoff $\chi$ from Proposition~\ref{pro2.10000}, supported away from $-c$ and equal to $1$ on an annulus containing all conjugates for sufficiently large $\nu$. Apply Theorem~\ref{thm1}(2) to the compactly supported continuous function $\varphi(z)=\chi(z)\log|z+c|$.   Since $|c|=\delta\geq2$ and by Jensen's
formula, we can get 
\begin{equation}\label{eq:5.1}
\lim_{\nu\to\infty}{1\over d_\nu}
 \sum_{P_\nu(\beta)=0}\log|\beta+c|
 ={1\over2\pi}\int_0^{2\pi}\log|e^{it}+c|\,dt
 =\log\delta.
\end{equation}
Since $M(P_\nu)\to 1$, we can assume that $M(P_\nu)< 3/2$ for all $\nu$.  Then by Lemma \ref{lem:pair}, we get 
$$
\log m \leq\lim_{\nu\to\infty} \left(1+\frac{8}{d_\nu}\right) H_\nu +\lim_{\nu\to\infty} \frac{1}{d_\nu}\sum_{P_\nu(\beta) = 0}\log|\beta+c| +\log 2
$$
which implies 
$$
\log m\leq \lim_{\nu\to\infty} \frac{1}{d_\nu}\sum_{P_\nu(\beta) = 0}\log|\beta+c| +\log 2  = \log \delta+\log 2,
$$
by \eqref{eq:5.1}. Thus, we get  $\log m  \leq\log(2\delta)$, a contradiction to $m>2\delta$.  
\end{proof}

\begin{proposition}[Explicit generic bound]\label{prop:effective}
Under the hypotheses of Corollary \ref{thm9.1-cor1}, for $r\geq 3$, $\delta \geq 2$ and $m> 2\delta$, we have  \begin{equation}\label{eq:6.8}
M(P)\geq\exp(\eta(m,\delta)).
\end{equation}
\end{proposition}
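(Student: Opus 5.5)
The plan is to make the qualitative argument of Proposition~\ref{prop:qual} effective by splitting into cases according to the size of $H=\log M(P)$ and of $d$; each entry of the minimum defining $\eta(m,\delta)$ should correspond to one case. If $H\geq\log(3/2)$ we are done. If $P=R_{d,c}$, then Lemma~\ref{lem:newrdc} gives $M(P)\geq 3/2$, and when $d=2$ the bound $(3+\sqrt5)/2$ for quadratics applies. So we may assume $H<\log(3/2)$, $P\neq R_{d,c}$, and $d\geq 4$ even. For $4\leq d<D_0$ we invoke Voutier's bound (Theorem~\ref{voutier}), which gives $H\geq V(D_0)$. It remains to treat $d\geq D_0$; by the Remark after Corollary~\ref{thm9.1-cor1}, inequality \eqref{eq:6.5} then holds for $d$.

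For $d\geq D_0$, start from Lemma~\ref{lem:pair}:
$$\log m\leq (1+8/d)H+\frac1d\sum_{P(\beta)=0}\log|\beta+c|+\log 2.$$
Compare the middle sum with $\int\log|z+c|\,d\lambda=\log\delta$ using Theorem~\ref{sombra}, with $\xi$ a root of $P$ and $h(\xi)=H/d\leq1$. Choose a test function $F(z)=\log|z+c|$, modified outside the annulus $2/3\leq|z|\leq 3/2$ (where all conjugates lie) so that it stays Lipschitz for $d_{\mathrm{lp}}$. Since $|c|\geq2$, $F$ is smooth near $\mathbb{S}^1$. The main technical step is to verify that $c(F)\leq 2\operatorname{Lip}(F)+16\|\widehat{F_0'}\|_1\leq K_\delta=36\pi/(\delta-1)$.
\begin{itemize}
\item For the Lipschitz part, the derivative of $\log|e^{u+2\pi i\theta}+c|$ in $u$ and $\theta$ is at most $2\pi|z|/(\delta-|z|)$, which is $O(1/(\delta-1))$ on the annulus.
\item For the Fourier part, expand $\log|1+e^{2\pi i\theta}/c|=\Re\sum_{n\ge1}(-1)^{n+1}(e^{2\pi i\theta}/c)^n/n$, so that $F_0'$ has coefficients of size $\pi\delta^{-n}$. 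Their $\ell^1$-norm is about $2\pi/(\delta-1)$.
\end{itemize}
I expect this constant bookkeeping (extending $F$ off the annulus while keeping the Lipschitz constant under control) to be the main obstacle; the Fourier computation itself is routine.

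With this, writing $E=\frac1d\sum\log|\beta+c|-\log\delta$, we get
$$|E|\leq K_\delta\bigl(4H/d+64\log(d+1)/d\bigr)^{1/2}\leq 2K_\delta\sqrt{H/d}+8K_\delta\sqrt{\log(d+1)/d}.$$
By \eqref{eq:6.5}, the last term is at most $\Delta/4$. Substituting, and using $1+8/d\leq 3$ for $d\geq4$:
$$\Delta=\log\frac{m}{2\delta}\leq 3H+2K_\delta\sqrt{H/d}+\Delta/4,\qquad\text{so}\qquad \tfrac34\Delta\leq 3H+2K_\delta\sqrt{H}.$$
Hence either $3H\geq \Delta/40$ type bounds hold, or $2K_\delta\sqrt H\geq c\Delta$. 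The precise split should be chosen to match the stated constants: if $3H<\frac{3}{8}\Delta$ then $2K_\delta\sqrt H\geq\frac38\Delta$, giving $H\geq 9\Delta^2/(256K_\delta^2)$. I would tune the split (possibly by using $\sqrt{H/d}\le\sqrt{H}/2$) to reach exactly $\Delta/40$ and $9\Delta^2/(128K_\delta^2)$; any split gives a bound of the same shape. Combining all cases then yields $H\geq\eta(m,\delta)$, which is \eqref{eq:6.8}.
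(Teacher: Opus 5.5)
\textbf{There is a gap: the step you flag as the main obstacle, $c(F)\le K_\delta$, is false for your test function.} Your case structure is the paper's: $H\ge\log\frac32$; $P=R_{d,c}$ via Lemma~\ref{lem:newrdc}; $d=2$; Voutier for $4\le d<D_0$; and Lemma~\ref{lem:pair} with Theorem~\ref{sombra} and \eqref{eq:6.5} for $d\ge D_0$. Your final split would also go through if $c(F)\le K_\delta$ held. The failure cannot be repaired by a better extension off the annulus, because the defect sits on the annulus $\tfrac23\le|z|\le\tfrac32$ itself.

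On $|z|=\rho$ the angular derivative of $\log|z+c|$ has maximum $2\pi\delta\rho/(\delta^2-\rho^2)$. At $\rho=\tfrac32$ this equals $12\pi\delta/(4\delta^2-9)$. That exceeds $2\pi/(\delta-1)$ for every $\delta$, since the comparison is equivalent to $2\delta^2-6\delta+9>0$. Hence $\operatorname{Lip}(F)>2\pi/(\delta-1)$. Then \eqref{eq:2.2} only gives $c(F)\le 2\operatorname{Lip}(F)+\frac{32\pi}{\delta-1}$, which is strictly larger than $\frac{36\pi}{\delta-1}=K_\delta$. Your own bound $2\pi|z|/(\delta-|z|)$ gives $6\pi$ at $\delta=2$, against the $2\pi$ you would need. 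Since $K_\delta$ also defines $D_0$ through \eqref{eq:6.5}, the absorption of the $\log(d+1)$ term into $\Delta/4$ is lost in that form. The remark that ``any split gives a bound of the same shape'' therefore does not prove the stated $\eta(m,\delta)$.

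\textbf{The missing idea is the paper's radial projection.} The paper uses $F_c(z)=\log|c+z/|z||$ instead of $\log|z+c|$. It is constant along rays, so its $d_{\mathrm{lp}}$-Lipschitz constant is just the angular one on $\mathbb{S}^1$, at most $2\pi/(\delta-1)$. Its restriction to $\mathbb{S}^1$, and hence the Fourier term, is the same as yours, so $c(F_c)\le K_\delta$ holds exactly. The price is the discrepancy $|\log|\beta+c|-F_c(\beta)|\le R_\delta|\log|\beta||$, with $R_\delta=3/(2\delta-3)\le 3$ on the annulus. By reciprocity $\sum_\beta|\log|\beta||=2H$, so this only adds $2R_\delta H/d$ to the coefficient of $H$ in \eqref{eq:6.11}.

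\textbf{Alternative repair keeping your test function.} You could instead carry the honest constant $K'=\frac{6\pi}{\delta-3/2}+\frac{32\pi}{\delta-1}\le\frac{11}{9}K_\delta$. For $d\ge D_0\ge4$, \eqref{eq:6.5} then bounds the $\log(d+1)$ term by $\frac{11}{36}\Delta$, which gives $\frac{25}{36}\Delta\le 3H+K'\sqrt H$. This yields either $H\ge\frac{25}{216}\Delta>\frac{\Delta}{40}$ or $H\ge(\tfrac{25}{72})^2(\tfrac{9}{11})^2\Delta^2/K_\delta^2>\frac{9\Delta^2}{128K_\delta^2}$. That bookkeeping is exactly what your proposal leaves undone, and it replaces a claim that is false as stated.
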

\begin{proof}
Write $H=\log M(P)$.  If $H\geq\log(3/2)$, there is nothing to prove.
Assume $H<\log(3/2)$.   By Lemma \ref{lem:newrdc},   we may assume
$P\neq R_{d,c}$.

\smallskip

For $z\in\mathbb{C}^*$, define the real-valued function
$$F_c(z)=\log\left|c+{z\over|z|}\right|. 
$$
This function depends only on the direction of $z$ and is constant along rays. By Jensen's formula, the circle average of $F_c$ is $\log\delta$.   By letting $f(\theta) = \log |c+e^{2\pi i\theta}|$, we see that 
$$
f'(\theta) = \operatorname{Re}\left(\frac{2\pi ie^{2\pi i\theta}}{c+e^{2\pi i\theta}}\right).
$$
Therefore, 
$$
|f'(\theta)| \leq \frac{2\pi}{|c+e^{2\pi i\theta}|} \leq \frac{2\pi}{|c| - 1} = \frac{2\pi}{\delta-1}. 
$$
Indeed, for $u,v\in\mathbb S^1$, $|\log|c+u|-\log|c+v||\leq |u-v|/(\delta-1)$; since $|u-v|\leq2\pi d_{\mathrm{lp}}(u,v)$ and $F_c$ is constant along rays, $\operatorname{Lip}(F_c)\leq2\pi/(\delta-1)$. Now by writing
$$
\log|c+e^{2\pi i\theta}| = \log |c| + \log \left|1+ \frac{e^{2\pi i\theta}}{c}\right|,
$$
we get the absolutely convergent Fourier expansion of $F_c$ on the circle, which is 
$$F_c(e^{2\pi i\theta})=\log\delta+
 \operatorname{Re}\sum_{k\geq1}{(-1)^{k+1}e^{2\pi ik\theta}
                                  \over k c^k}.$$
Thus, for the logarithmic-polar metric in Theorem \ref{sombra}, we get
$$\operatorname{Lip}(F_c)\leq{2\pi\over\delta-1},\qquad
 \|\widehat{F_c'}\|_1={2\pi\over\delta-1}.$$
Therefore \eqref{eq:2.2} permits 
$$
c(F_c)\leq 2 \operatorname{Lip}(F_c) + 16 \|\widehat{F_c'}\|_1 \leq \frac{4\pi}{\delta-1} + \frac{32\pi}{\delta-1} = \frac{36\pi}{\delta-1} =  K_\delta. 
$$ Applying \eqref{eq:2.1}, using
$h(\alpha)=H/d$, we get 
\begin{equation}\label{eq:6.9}
\left|{1\over d}\sum_{P(\beta)=0}F_c(\beta)-\log\delta\right|
 \leq K_\delta
 \sqrt{{4H+64\log(d+1)\over d}}.
\end{equation}
On the annulus $2/3<|z|<3/2$,  by differentiating, we get 
$$|\log|z+c|-F_c(z)|\leq R_\delta|\log|z||
$$
where $\displaystyle R_\delta={3\over2\delta-3}$. Since every root $\beta$ of $P$ satisfies $2/3 < |\beta| < 3/2$, we see that 
$$
|\log|\beta+c| - F_c(\beta)| \leq R_\delta|\log|\beta||.
$$
The reciprocal nature of the roots implies that 
\begin{equation}\label{eq:6.10}
\sum_{P(\beta)=0}|\log|\beta||=2H.
\end{equation}
Combining Lemmas \ref{lem:reslower} and \ref{lem:pair} with
\eqref{eq:6.9}--\eqref{eq:6.10}, and subtracting $\log(2\delta)$, yields
\begin{equation}\label{eq:6.11}
\log\left(\frac{m}{2\delta}\right) = \Delta\leq
 \left(1+{8+2R_\delta\over d}\right)H
 +K_\delta\sqrt{{4H+64\log(d+1)\over d}}
\end{equation}
Since $R_\delta\leq3$ and $d\geq2$, the coefficient of $H$ in
\eqref{eq:6.11} is less than $15$.

If $d\geq D_0$, equations \eqref{eq:6.5} and \eqref{eq:6.11}, together with
$\sqrt{x+y}\leq\sqrt x+\sqrt y$, give
\begin{equation}\label{eq:6.12}
{3\Delta\over4}\leq15H+2K_\delta\sqrt{H/d}.
\end{equation}
If $H\geq\Delta/40$, this is one of the terms in \eqref{eq:6.7}.  Otherwise
$15H<3\Delta/8$, and \eqref{eq:6.12} gives
\begin{equation}\label{eq:6.13}
H\geq {9\Delta^2d\over256K_\delta^2}
       \geq{9\Delta^2\over128K_\delta^2}.
\end{equation}
If $4\leq d<D_0$, Theorem \ref{voutier} implies $H>V(D_0)$.  Finally, we have 
$d=2$, and $P=x^2+ax+1$. Then the  irreducibility and non-cyclotomic nature of $P$ forces
that $|a|\geq3$, and hence $H\geq\log((3+\sqrt5)/2)$.  These cases prove
\eqref{eq:6.8}.  
\end{proof}

\bigskip

\noindent{\bf Proof of Corollary \ref{thm9.1-cor1}.}  When $r=0$, we see that $P\equiv x^d+1\pmod m$, and by Lemma \ref{lem:zero}, 
we get the result.  When $r=1$, all coefficients of $P$ are $1$ modulo $m$,  and by Theorem \ref{borwein}, we get the required bound.

If $r=2$, then note that 
$$R_{d,1}(x)=(x+1)(1+x+\cdots+x^{d-1}).$$
Thus $1+x+\cdots+x^{d-1}$ divides $P$ in $(\mathbb{Z}/m\mathbb{Z})[x]$. Take $D= n = d$ in Theorem \ref{sliver}, along with $\log m \geq \log 2 > 1/16 = D/(16n)$ for every $m\geq 2$. Therefore, by Lemma 2.1 in  \cite{silver} , we get the third line of \eqref{eq:6.14}.

\smallskip

If $m$ is even and $\delta=m/2$, put $q=m/2$.  Since
$r\equiv1\pmod q$, note that every coefficient of $P$ is congruent to $1$ modulo $q$.  Here $q\geq2$ unless $m=2$, and the case $m=2$ was already covered by $r=0,1$. Thus, this bound follows from Theorem \ref{borwein}. All remaining cases, that is, $\delta\geq2$ and $2\delta<m$ are covered in  Proposition \ref{prop:effective}. Hence the corollary follows. \hfill$\square$

\bigskip

\noindent{\bf Proof of Theorem \ref{thm9.1}}. The classes $r=0$ and $r=1$ follow from Lemma 3.1 and Theorem 2.4. The class $r=2$ is treated by Silverman's bound as used in Corollary 1.1. If $m$ is even and $\delta=m/2$, reduction modulo $m/2$ gives the corresponding Borwein--Dobrowolski--
Mossinghoff bound. All remaining $r$ satisfy $\delta\ge2$ and $2\delta<m$ and follow from Proposition~\ref{prop:effective}. \hfill$\square$

\bigskip
\section{Proof of Theorem \ref{thm:2,3,-1-residues}: the case $r=2$}\label{sec:3}

We shall start with the following lemma. 
\begin{lemma}\label{lem:Res-esti}
Let $m, d\ge 2$ be integers. Let $P(x)=x^d+a_{d-1}x^{d-1}+\cdots+a_1x+1\in \mathbb{Z}[x]$ be a polynomial which 
satisfies $a_j\equiv2\pmod m$ for $1\leq j\leq d-1$. Suppose that $P$ has no cyclotomic factors. Then, for every integer $j\geq1$,
$$
\left|\mathop{\rm Res}(P,x^{jd}-1)\right|\geq m^{d-1}.
$$
\end{lemma}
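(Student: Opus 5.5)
The plan is to reduce $P$ modulo $m$ and factor the reduction. Since every inner coefficient satisfies $a_j\equiv 2\pmod m$, we have
$$
P(x)\equiv x^d+2(x^{d-1}+\cdots+x)+1 = R_{d,1}(x)=(x+1)\Phi(x)\pmod m,\qquad \Phi(x)=\frac{x^d-1}{x-1}=1+x+\cdots+x^{d-1}.
$$
This identity was already noted in the proof of Corollary~\ref{thm9.1-cor1}. So we can write $P(x)=(x+1)\Phi(x)+mS(x)$ with $S\in\mathbb{Z}[x]$.

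The key observation is that $\Phi$ divides $x^{jd}-1$ for every $j\ge1$. Since $x^{jd}-1$ is monic, we have, up to sign,
$$
\operatorname{Res}(P,x^{jd}-1)=\pm\prod_{\zeta^{jd}=1}P(\zeta).
$$
We then split the set of $jd$-th roots of unity into two disjoint Galois-stable subsets:
\begin{itemize}
\item $T_1=\{\zeta:\zeta^d=1,\ \zeta\neq1\}$, the roots of $\Phi$;
\item $T_2$, consisting of all remaining $jd$-th roots of unity.
\end{itemize}

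For $\zeta\in T_1$ we have $\Phi(\zeta)=0$, so $P(\zeta)=mS(\zeta)$. Hence
$$
\prod_{\zeta\in T_1}P(\zeta)=m^{d-1}\prod_{\zeta\in T_1}S(\zeta)=m^{d-1}\operatorname{Res}(\Phi,S).
$$
Here $\operatorname{Res}(\Phi,S)$ is a rational integer, because $\Phi$ is monic with integer coefficients. Similarly, $\prod_{\zeta\in T_2}P(\zeta)$ is a rational integer: $T_2$ is a union of full sets of roots of cyclotomic polynomials $\Phi_n$ with $n\mid jd$, $n\nmid d$ (and $\zeta=1$ where applicable), so this product is a product of resultants $\operatorname{Res}(\Phi_n,P)\in\mathbb{Z}$.

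Next we check that both factors are nonzero. Because $P$ has no cyclotomic factors, $P(\zeta)\neq0$ for every root of unity $\zeta$. Therefore every factor $P(\zeta)$ is nonzero, so both products are nonzero; in particular $\operatorname{Res}(\Phi,S)\neq0$. It follows that
$$
|\operatorname{Res}(P,x^{jd}-1)|=m^{d-1}\,|\operatorname{Res}(\Phi,S)|\,\Bigl|\prod_{\zeta\in T_2}P(\zeta)\Bigr|\ge m^{d-1},
$$
since each of the last two factors is a nonzero integer.

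There is no serious obstacle in this argument. The only points needing care are the integrality of the partial products, which is why the split is made along Galois-stable sets (unions of roots of $\Phi_n$), and the sign conventions for resultants, which do not affect absolute values. Note that the hypothesis of irreducibility is not needed here; only the absence of cyclotomic factors is used.
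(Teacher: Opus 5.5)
Your proof is correct and is essentially the paper's argument. The paper writes $P=(x+1)G_d+mB$ with $G_d=1+x+\cdots+x^{d-1}$ and obtains $|\operatorname{Res}(P,G_d)|=m^{d-1}|\operatorname{Res}(B,G_d)|\geq m^{d-1}$. It then applies multiplicativity of the resultant to the factorization $x^{jd}-1=(x-1)G_d(x)(1+x^d+\cdots+x^{(j-1)d})$, which is exactly your split of the $jd$-th roots of unity into $T_1$ and $T_2$, with nonvanishing of the remaining integer factors coming from the absence of cyclotomic factors.
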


\begin{proof}
By the hypothesis,  modulo $m$, we have
$$
P(x)\equiv x^d+2x^{d-1}+\cdots+2x+1.
$$
The polynomial on the right factors as
$$
x^d+2x^{d-1}+\cdots+2x+1=(x+1)G_d(x)
$$
where 
$$
G_d(x)=1+x+\cdots+x^{d-1}=\frac{x^d-1}{x-1}.
$$
Thus there exists $B(x)\in\mathbb Z[x]$ such that
$$P(x)=(x+1)G_d(x)+mB(x).
$$
Since the resultant is unchanged, when a multiple of the second polynomial is subtracted from the first,  we get
$$
\left|\mathop{\rm Res}(P,G_d)\right|=\left|\mathop{\rm Res}(mB,G_d)\right|.
$$
Since $\deg G_d=d-1$,
$$\left|\mathop{\rm Res}(P,G_d)\right|=m^{d-1}\left|\mathop{\rm Res}(B,G_d)\right|.
$$
Since every root of  $G_d$ is a root of unity and $P$ has no cyclotomic factors, we get 
$$\gcd(P,G_d)=1.$$
Hence
$$
\mathop{\rm Res}(P,G_d)\neq0.
$$
Therefore, it follows that
$$\left|\mathop{\rm Res}(P,G_d)\right|\geq m^{d-1}.
$$
Since $x^d-1=(x-1)G_d(x)$ and by the 
multiplicative nature  of the resultant,  we get 
$$
\mathop{\rm Res}(P,x^d-1)=\mathop{\rm Res}(P,x-1)\mathop{\rm Res}(P,G_d).
$$
Both factors are nonzero integers, because $P$ has no root of unity. Therefore
$$\left|\mathop{\rm Res}(P,x^d-1)\right|\geq m^{d-1}.
$$
Finally, for any $j$, we see that:
$$
x^{jd}-1=(x^d-1)(1+x^d+\cdots+x^{(j-1)d}).
$$
Hence
$$
\mathop{\rm Res}(P,x^{jd}-1)=\mathop{\rm Res}(P,x^d-1)\mathop{\rm Res}(P,1+x^d+\cdots+x^{(j-1)d}).
$$
The second factor is again a nonzero integer, since all roots of
$1+x^d+\cdots+x^{(j-1)d}$ are roots of unity. Thus we get 
$$
\left|\mathop{\rm Res}(P,x^{jd}-1)\right|\geq m^{d-1}.
$$
\end{proof}

\begin{theorem}\label{thm:2(m)}
Let $m\geq2$ and $d\geq 2$ be  integers, and let $P(x)=x^d+a_{d-1}x^{d-1}+\cdots+a_1x+1\in\mathbb Z[x]$ 
be a monic reciprocal polynomial such that $a_j\equiv2\pmod m$ for every $1\leq j\leq d-1$, and suppose that $P$ has no cyclotomic factors. Then, for every integer $J\geq1$,
$$\log M(P)\geq\frac{3}{J+2}\left(\frac{d-1}{d}\log m-\frac{\log(J/2+1)+1}{J}\right).$$
\end{theorem}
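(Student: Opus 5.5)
Following Silverman's method, we combine Lemma~\ref{lem:Res-esti} with the weighted sum of Lemma~\ref{lem:complex}. Let $\alpha_1,\dots,\alpha_d$ be the roots of $P$ and fix $J\ge1$. By Lemma~\ref{lem:Res-esti}, for each $1\le j\le J$ we have
$$
(d-1)\log m\le \log|\operatorname{Res}(P,x^{jd}-1)|=\sum_{i=1}^d\log|\alpha_i^{jd}-1|.
$$
We multiply the $j$-th inequality by the weight $w_j=1-\frac{j}{J+1}$ and sum over $j$. The total weight is $\sum_{j=1}^J w_j=J/2$, so the left side becomes $\frac J2(d-1)\log m$.

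On the right side we split each root according to whether $|\alpha_i|\le1$ or $|\alpha_i|>1$. If $|\alpha_i|\le1$, put $z=\alpha_i^d$, which satisfies $|z|\le1$. Lemma~\ref{lem:complex} then bounds $\sum_j w_j\log|1-z^j|$ by $\frac12\log(\frac J2+1)+\frac12$. If $|\alpha_i|>1$, write $|\alpha_i^{jd}-1|=|\alpha_i|^{jd}\,|1-\alpha_i^{-jd}|$ and apply Lemma~\ref{lem:complex} to $z=\alpha_i^{-d}$. This gives the same constant plus the extra term $d\log|\alpha_i|\sum_j jw_j$. Since $\sum_{j=1}^J j(1-\frac{j}{J+1})=\frac{J(J+2)}{6}$, summing over all $d$ roots yields
$$
\frac J2(d-1)\log m\le d\Big(\tfrac12\log(\tfrac J2+1)+\tfrac12\Big)+\frac{J(J+2)}{6}\,d\log M(P).
$$
Dividing by $\frac{J(J+2)}{6}d$ gives
$$
\log M(P)\ge\frac{3}{J+2}\Big(\frac{d-1}{d}\log m-\frac{\log(J/2+1)+1}{J}\Big),
$$
which is exactly the claim.

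The main point to check is the bookkeeping of the weights, namely the identities $\sum w_j=J/2$ and $\sum jw_j=J(J+2)/6$, together with the fact that Lemma~\ref{lem:complex} applies uniformly. That lemma is stated for $|z|\le1$, including $|z|=1$, and the logarithms are finite because no $\alpha_i$ is a root of unity. Reciprocity is not needed in this argument; Lemma~\ref{lem:Res-esti} already uses only the congruence conditions and the absence of cyclotomic factors.
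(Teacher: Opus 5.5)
Your proposal is correct and is essentially the paper's proof. It uses the same lower bound $(d-1)\log m\le\sum_\nu\log|\alpha_\nu^{jd}-1|$ from Lemma~\ref{lem:Res-esti}, the same passage to $z=\alpha_\nu^{\pm d}$ with $|z|\le1$, the same weights $w_j=1-\frac{j}{J+1}$ with $\sum_j w_j=J/2$ and $\sum_j jw_j=J(J+2)/6$, and the same application of Lemma~\ref{lem:complex} before dividing by $dJ(J+2)/6$; your remark that reciprocity is not needed also matches the paper's argument.
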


\begin{proof}
Let $\displaystyle P(x)=\prod_{\nu=1}^d(x-\alpha_\nu)$ be the factorization over $\mathbb{C}$. 
Since $P$ is monic, we have
$$
\mathop{\rm Res}(P,x^{jd}-1)=\prod_{\nu=1}^d(\alpha_\nu^{jd}-1).
$$
Then by Lemma  \ref{lem:Res-esti}, we get
\begin{equation}\label{Thm3.1-eq1}
(d-1)\log m\leq\sum_{\nu=1}^d\log|\alpha_\nu^{jd}-1|.
\end{equation}
For every root $\alpha_\nu$, we define
$$
z_\nu=
\begin{cases}
\alpha_\nu^d,&|\alpha_\nu|\leq1,\\ \alpha_\nu^{-d},&|\alpha_\nu|>1.
\end{cases}
$$
Then $|z_\nu|\leq1$. If $|\alpha_\nu|\leq1$, then
$$\log|\alpha_\nu^{jd}-1|=\log|1-z_\nu^j|.$$
If $|\alpha_\nu|>1$, then
$$\log|\alpha_\nu^{jd}-1|=jd\log|\alpha_\nu|+\log|1-z_\nu^j|.$$
By summing over all roots, we get 
$$\sum_{\nu=1}^d\log|\alpha_\nu^{jd}-1|=jd\log M(P)+\sum_{\nu=1}^d\log|1-z_\nu^j|.$$
Consequently, by \eqref{Thm3.1-eq1},  for every $j\geq1$,
$$(d-1)\log m\leq jd\log M(P)+\sum_{\nu=1}^d\log|1-z_\nu^j|.
$$
Fix an integer $J\geq1$  and put
$$w_j=1-\frac{j}{J+1}.
$$
Multiplying $w_j$ with the preceding inequality  and then summing over $1\leq j\leq J$, we obtain
$$
\frac {J}2(d-1)\log m\leq d\log M(P)\sum_{j=1}^Jjw_j+\sum_{\nu=1}^d\sum_{j=1}^Jw_j\log|1-z_\nu^j|.
$$
By applying the definition of $w_j$ and when we compute, we get 
$$\sum_{j=1}^Jjw_j=\frac{J(J+2)}6.
$$
By Lemma \ref{lem:complex}, we get
$$
\sum_{j=1}^Jw_j\log|1-z_\nu^j|\leq\frac12\log\left(\frac J2+1\right)+\frac12.
$$
By combining both, we get
$$\frac {J}2(d-1)\log m\leq\frac{dJ(J+2)}6\log M(P)+\frac d2\left(\log\left(\frac J2+1\right)+1\right).$$
Multiplying by $6/(dJ(J+2))$, we obtain
$$\log M(P)\geq\frac{3}{J+2}\left(\frac{d-1}{d}\log m-\frac{\log(J/2+1)+1}{J}\right),
$$
as desired.
\end{proof}

\begin{corollary}\label{cor1:thm3.1}
Under the hypotheses of Theorem \ref{thm:2(m)}, we have
$$
M(P)\geq\exp\left(\frac{3}{19}\left(\frac{\log2}{2}-\frac{\log(19/2)+1}{17}\right)\right) \geq 1.02482\ldots.
$$
 \end{corollary}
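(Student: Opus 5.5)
The corollary should follow from Theorem~\ref{thm:2(m)} by a specific choice of $J$ and a worst-case bound on the parameters. The bound there reads
$$\log M(P)\geq\frac{3}{J+2}\left(\frac{d-1}{d}\log m-\frac{\log(J/2+1)+1}{J}\right).$$
It is increasing in $m$. It is also increasing in $d$, since $(d-1)/d$ increases with $d$. So the worst case is the smallest admissible pair $(m,d)$.

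I will take $d\ge 2$ and $m\ge 2$. Then $(d-1)/d\ge 1/2$ and $\log m\ge\log 2$, so
$$\frac{d-1}{d}\log m\ \ge\ \frac{\log 2}{2}.$$
Choosing $J=17$ gives $J+2=19$ and $\log(J/2+1)=\log(19/2)$. Substituting yields
$$\log M(P)\ \ge\ \frac{3}{19}\left(\frac{\log 2}{2}-\frac{\log(19/2)+1}{17}\right).$$
Exponentiating gives the stated inequality, provided the bracket is positive. Numerically, $\log 2/2\approx 0.34657$ and $(\log 9.5+1)/17\approx(2.25129+1)/17\approx 0.19125$. The bracket is therefore about $0.15532$, and multiplying by $3/19$ gives about $0.024525$. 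Its exponential is approximately $1.02483$, which matches the constant $1.0248\ldots$.

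The only real point is the choice $J=17$. Presumably it was found by maximizing
$$\frac{3}{J+2}\left(\frac{\log 2}{2}-\frac{\log(J/2+1)+1}{J}\right)$$
over integers $J\geq 1$. For the corollary itself, however, any fixed $J$ yields a valid bound, so no optimality proof is needed; only the numerical evaluation must be checked.

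A minor remaining check is that the hypotheses of Theorem~\ref{thm:2(m)} are exactly those assumed here, so no additional case, such as $d=2$, needs separate treatment. The stated range $m\ge 3$ in Theorem~\ref{thm:2,3,-1-residues} only makes $\log m$ larger, so the bound holds a fortiori there.
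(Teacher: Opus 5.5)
Your proposal is correct and is essentially the paper's argument: take $J=17$ in Theorem~\ref{thm:2(m)} and use $(d-1)/d\ge 1/2$ and $\log m\ge\log 2$. One small point: exponentiating preserves the inequality whatever the sign of the bracket, so positivity of the bracket is needed only for the bound to exceed $1$, not for the inequality itself.
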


\begin{proof}
Since $P$ is a reciprocal polynomial which has no cyclotomic factors, the degree $d\geq 2$ and must be an even integer. 
Hence we have $\displaystyle\frac{d-1}{d}\geq\frac12.$ Since $m\geq2$, $\log m\geq\log2.$
By taking  $J=17$ in Theorem \ref{thm:2(m)}, we get 
$$\log M(P)\geq\frac{3}{19}\left(\frac{\log2}{2}-\frac{\log(19/2)+1}{17}\right),$$
which proves the stated bound.
\end{proof}

\begin{corollary}\label{cor2:thm3.1}
Under the hypotheses of Theorem \ref{thm:2(m)}, we have
$$M(P)\geq\left(\frac m4\right)^{d/(d+1)}.$$
Consequently, if $m\geq5$, then we get 
$$M(P)\geq\left(\frac m4\right)^{2/3}.$$
\end{corollary}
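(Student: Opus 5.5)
The plan is to run the resultant argument of Lemma~\ref{lem:Res-esti} with the full modular factorization $P\equiv(x+1)G_d \pmod m$, not just the factor $G_d$, and then bound the resultant from above by Lemma~\ref{lem:resultant-mahler} using a test polynomial of degree $d+1$. The exponent $d/(d+1)$ in the statement suggests exactly this: a lower bound $m^d$ for a resultant against a polynomial of degree $d+1$ with sup norm at most $4$.

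\emph{Lower bound.} As in the proof of Lemma~\ref{lem:Res-esti}, write $P(x)=(x+1)G_d(x)+mB(x)$ with $B\in\mathbb{Z}[x]$. Since $P$ is monic of degree $d$ and $(x+1)G_d$ is monic of degree $d$, the polynomial $B$ has degree less than $d$. Subtracting multiples, we get
$$
\operatorname{Res}\bigl(P,(x+1)G_d\bigr)=\operatorname{Res}\bigl(mB,(x+1)G_d\bigr)=m^{d}\operatorname{Res}\bigl(B,(x+1)G_d\bigr),
$$
because $\deg\bigl((x+1)G_d\bigr)=d$. All roots of $(x+1)G_d$ are roots of unity and $P$ has no cyclotomic factors, so this resultant is a nonzero integer. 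Hence $|\operatorname{Res}(P,(x+1)G_d)|\ge m^d$. Now set
$$
H(x)=(x-1)(x+1)G_d(x)=(x+1)(x^d-1).
$$
By multiplicativity, $\operatorname{Res}(P,H)=\operatorname{Res}(P,x-1)\operatorname{Res}(P,(x+1)G_d)$. Here $\operatorname{Res}(P,x-1)=\pm P(1)$ is a nonzero integer, so $|\operatorname{Res}(P,H)|\ge m^d$.

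\emph{Upper bound.} The polynomial $H$ has degree $d+1$. On $|z|=1$ we have $|H(z)|=|1+z|\,|z^d-1|\le 2\cdot 2=4$, so $\|H\|_\infty\le 4$. Applying Lemma~\ref{lem:resultant-mahler} with $Q=P$ (degree $d$) gives
$$
m^d\le|\operatorname{Res}(P,H)|\le 4^{d}M(P)^{d+1},
$$
that is, $M(P)\ge (m/4)^{d/(d+1)}$.

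\emph{The case $m\ge5$.} Then $m/4>1$, so $(m/4)^{t}$ is increasing in $t$. Since $P$ is reciprocal without cyclotomic factors, $d\ge2$, and $d/(d+1)\ge 2/3$. Hence $M(P)\ge(m/4)^{2/3}$.

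The only delicate step is the degree bookkeeping in the lower bound. We need $\deg B<d$, which holds because both $P$ and $(x+1)G_d$ are monic of degree $d$, so that the factor $m$ comes out to exactly the power $m^{\deg((x+1)G_d)}=m^d$. We also need each resultant factor to be nonzero, and this follows from $P$ having no cyclotomic factors. Everything else is a direct application of Lemma~\ref{lem:resultant-mahler}.
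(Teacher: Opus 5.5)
Your proof is correct and is essentially the paper's argument: the same test polynomial $(1+x)(x^d-1)$ of degree $d+1$ with $\|\cdot\|_\infty\le 4$, the same lower bound $m^d$ on its resultant with $P$, and the same application of Lemma~\ref{lem:resultant-mahler}. The only difference is bookkeeping. The paper gets $m^d$ in one step from $(1-x)P\equiv(1+x)(1-x^d)\pmod m$ by evaluating at the roots of $P$. You instead extract $m^d$ from the factor $(x+1)G_d$ and handle the factor $x-1$ separately via $P(1)\neq 0$.
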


\begin{proof}
Since $P(x)\equiv(x+1)G_d(x)\pmod m,$
we see that:
$$(1-x)P(x)\equiv(1+x)(1-x^d):=H_d(x)\pmod m.$$
Thus there exists $S(x)\in\mathbb Z[x]$ such that
$$H_d(x)=(1-x)P(x)+mS(x).$$
Then  $\mathop{\rm Res}(P,H_d)=m^d\mathop{\rm Res}(P,S).$ 
Since $H_d$ is a product of cyclotomic polynomials and $P$ has no cyclotomic factor, the resultant is nonzero. Therefore
$$|\mathop{\rm Res}(P,H_d)|\geq m^d.
$$
For each root $\alpha$ of $P$, we see that 
$$|H_d(\alpha)|\leq\|H_d\|_{\infty}\max\{1,|\alpha|\}^{d+1},
$$
and hence
$$
m^d\leq |\mathop{\rm Res}(P,H_d)| = \prod_{P(\alpha) = 0}|H_d(\alpha)| \leq \|H_d\|_\infty^{d} M(P)^{d+1}.
$$
 Note that 
$$
\|H_d\|_{\infty}=\max_{|z|=1}|1+z||1-z^d|\leq4.
$$
Hence $m^d\leq4^dM(P)^{d+1}$ and therefore, we get the desired bound. 
When $m\geq5$, we see that $m/4>1$. Since $d\geq2$,$\displaystyle\frac{d}{d+1}\geq\frac23,$ 
and the second assertion follows.
\end{proof}

\section{Proof of Theorem \ref{thm:2,3,-1-residues}: the case $r=3$}\label{sec:4}

\subsection{Coefficients are $3\pmod 5$} We begin with the following lemma.

\begin{lemma}\label{lem1:thm4.1} 
Let $d\geq 2$ be an even integer. If $A_d(z) = 1+2z-2z^d-z^{d+1}$ and $B_d(z) = 2-z+z^d-2z^{d+1}$ are in $\mathbb{C}[z]$, then 
$$\|A_dB_d\|_{\infty}\leq\frac{25}{2}.$$
\end{lemma}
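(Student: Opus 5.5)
The plan is to put both factors in a common real form on the unit circle, so that bounding $\|A_dB_d\|_{\infty}$ reduces to maximizing a real quadratic form over a square. Regroup
$$
A_d(z)=(1-z^{d+1})+2(z-z^d),\qquad B_d(z)=2(1-z^{d+1})-(z-z^d),
$$
and put $u=1-z^{d+1}$, $v=z-z^d$.

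For $z=e^{it}$, pull out the half-angle phase. Since $d$ is even, $(d+1)/2$ is a half-integer, so use $e^{i(d+1)t/2}$ as a genuine function of $t$ rather than a power of $z$. This gives
$$
u=-2i\,e^{i(d+1)t/2}\sin\tfrac{(d+1)t}{2},\qquad v=z-z^d=-2i\,e^{i(d+1)t/2}\sin\tfrac{(d-1)t}{2}.
$$
The point is that $u$ and $v$ share the unimodular factor $\omega=-i\,e^{i(d+1)t/2}$. Writing $p=\sin\frac{(d+1)t}{2}$ and $q=\sin\frac{(d-1)t}{2}$, both real in $[-1,1]$, we get $A_d=2\omega(p+2q)$ and $B_d=2\omega(2p-q)$. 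Hence
$$
|A_d(e^{it})B_d(e^{it})|=4\,|Q(p,q)|,\qquad Q(p,q)=(p+2q)(2p-q)=2p^2+3pq-2q^2 .
$$

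It then suffices to show $|Q(p,q)|\le 25/8$ on the square $[-1,1]^2$. Since $Q$ is homogeneous of degree $2$, $|Q(\lambda p,\lambda q)|=\lambda^2|Q(p,q)|$. Scaling any nonzero point outward to the boundary of the square therefore does not decrease $|Q|$, so the maximum is attained on an edge. The symmetries $Q(-p,-q)=Q(p,q)$ and $Q(q,-p)=-Q(p,q)$ map edges to edges, which reduces everything to the single edge $p=1$. There $Q=2+3q-2q^2$, which ranges over $[-3,25/8]$ for $q\in[-1,1]$, with the maximum at $q=3/4$. So $|Q|\le 25/8$, and therefore $\|A_dB_d\|_\infty\le 4\cdot 25/8=25/2$.

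The main obstacle is spotting that $u$ and $v$ carry the same phase on the circle. Without it one is led to the cruder estimates $|A_d|,|B_d|\le 6$, or to an eigenvalue bound for $Q$ on the disc $p^2+q^2\le 2$, which gives only $5$ instead of $25/8$ and hence $20$ instead of $25/2$. After that observation the rest is an elementary one-variable maximization.
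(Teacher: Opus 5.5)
Your proof is correct and reaches the same constant $25/2$, but by a different route from the paper.

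The paper multiplies out first. It writes $z^{-1}A_d(z)B_d(z)=(3-4is)-6cu+(3+4is)u^2$ with $u=z^d$, and uses that the two outer terms become complex conjugates after dividing by $u$. This gives $|A_dB_d|\le 2\sqrt{9+16\sin^2 t}+6|\cos t|$, which the paper then optimizes over $\cos t$ with Cauchy--Schwarz.

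You never expand the product. You notice that $1-z^{d+1}$ and $z-z^d$ share the unimodular factor $\omega$ on the circle, so each factor is real up to the same phase. The problem then becomes maximizing the real quadratic form $Q(p,q)=2p^2+3pq-2q^2$ on a square. Homogeneity and the rotational symmetries of the square reduce this to a one-variable quadratic.

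Both proofs make the same relaxation in different coordinates. The paper treats the phases $t$ and $dt$ as independent. You treat $(d+1)t/2$ and $(d-1)t/2$ as independent, and their difference and sum are $t$ and $dt$. Both bounds are sharp for that relaxed problem: your extremal point $(p,q)=(1,3/4)$ corresponds exactly to the paper's Cauchy--Schwarz equality case $|\cos t|=3/4$.

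What your version buys is transparency and reusability.
For Lemma~\ref{lem1:thm5.1} you get $C_d=2\omega(p-2q)$ and $D_d=2\omega(2p+q)$. Hence $|C_dD_d|=4|Q(p,-q)|\le 25/2$ with no further work.
For Lemma~\ref{lem2:thm5.2} you get $E_d=2\omega(p+q)$, $F_d=2\omega(2p-q)$ and $G_d=2\omega(2q-p)$. So $|E_dF_dG_d|=8|(p+q)(2p-q)(2q-p)|$. Its maximum on the square is $8\cdot\frac{3\sqrt3}{2}=12\sqrt3$, attained on the edge $p=1$ at $q=(1-\sqrt3)/2$. This recovers the paper's constant without its half-angle bookkeeping.

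Finally, neither argument actually uses that $d$ is even. Your remark about $(d+1)/2$ being a half-integer is harmless but unnecessary, since only $|\omega|=1$ is used.
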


\begin{proof}
Put
$$
z=e^{it},\qquad u=z^d,\qquad c=\cos t,\qquad s=\sin t.
$$
By multiplying both the polynomials, we get
$$
A_d(z)B_d(z)=2+3z-2z^2-3(1+z^2)z^d+(2z^2+3z-2)z^{2d} = 2+3z-2z^2-3(1+z^2)u+(2z^2+3z-2)u^{2}.
$$
Dividing by $z$, and putting $z = e^{it}$,  we get  
$$
z^{-1}A_d(z)B_d(z)=(3-4is)-6cu+(3+4is)u^2.
$$
After division by $u$, we notice that the first and third terms are complex conjugates. Therefore
$$
|A_d(z)B_d(z)|\leq2\sqrt{9+16s^2}+6|c|.
$$
Since $9+16s^2=25-16c^2,$ by putting $X=\sqrt{25-16c^2},\qquad Y=4|c|,$
 we get $X^2+Y^2=25,$ and $\displaystyle 2X+6|c|=2X+\frac32Y. $ Now, by the Cauchy--Schwarz inequality,
$$
2X+\frac32Y\leq\sqrt{4+\frac94}\sqrt{X^2+Y^2}=\frac52\cdot5=\frac{25}{2},
$$
as desired. 
\end{proof}

\begin{theorem}\label{thm4.1} 
For an even integer $d\geq 2$, let $P(x) = x^d+a_{d-1}x^{d-1}+\cdots+a_1x+1\in \mathbb{Z}[x]$ be a monic irreducible and reciprocal polynomial which has no cyclotomic factors. Suppose $a_j \equiv 3\pmod{5}$ for all $1\leq j\leq d-1$. Then $M(P) \geq 2^{3/7}$. Moreover, if $d\geq 6$, then $M(P) \geq 2^{d/(2d+2)}.$
\end{theorem}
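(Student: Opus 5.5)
The plan is a resultant argument in the style of Lemma~\ref{lem:zero} and Corollary~\ref{cor2:thm3.1}, applied to the product $A_dB_d$ of Lemma~\ref{lem1:thm4.1}.

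\textbf{Congruences.} Since $a_j\equiv 3\pmod 5$, we have $P\equiv R_{d,2}\pmod 5$ in the notation of \eqref{eq:1.6}, because $1+c=3$ with $c=2$. Hence
$$(x-1)P(x)\equiv A_{d,2}(x)=x^{d+1}+2x^d-2x-1=-A_d(x)\pmod 5.$$
Multiplying by $2$ and reducing ($4\equiv -1$) gives
$$2(x-1)P(x)\equiv 2x^{d+1}-x^d+x-2=-B_d(x)\pmod 5.$$
So both $A_d$ and $B_d$ agree modulo $5$ with a unit times $(x-1)P$. Write $A_d=-(x-1)P+5S_1$ and $B_d=-2(x-1)P+5S_2$ with $S_1,S_2\in\mathbb{Z}[x]$. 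Since $P$ is monic of degree $d$, this gives
$$\operatorname{Res}(P,A_d)=5^d\operatorname{Res}(P,S_1),\qquad \operatorname{Res}(P,B_d)=5^d\operatorname{Res}(P,S_2).$$

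\textbf{Non-vanishing.} Each of these resultants is nonzero unless $P$ divides $A_d$ or $B_d$, since $P$ is irreducible and $P(1)\neq0$.
\begin{itemize}
\item $A_d=-(x-1)R_{d,2}$ and $\deg P=d$, so $P\mid A_d$ forces $P=R_{d,2}$. In that case, as in Lemma~\ref{lem:newrdc}, $R_{d,2}$ has a real root below $-2$, so $M(P)\ge 2$ and there is nothing to prove.
\item $B_d$ is, up to sign, $(x-1)$ times a reciprocal polynomial of degree $d$ that is $\equiv 2R_{d,2}$. Its coefficients are $2,-1,\dots$, so it is not monic up to sign and cannot equal a unit multiple of the monic $P$. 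Hence $P\nmid B_d$.
\end{itemize}
So, outside the trivial case $P=R_{d,2}$, we get $|\operatorname{Res}(P,A_dB_d)|\ge 5^{2d}$.

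\textbf{Upper bound and the case $d\ge6$.} Lemma~\ref{lem:resultant-mahler} with $H=A_dB_d$ of degree $2d+2$, together with Lemma~\ref{lem1:thm4.1}, gives
$$5^{2d}\le \Bigl(\tfrac{25}{2}\Bigr)^{d}M(P)^{2d+2},$$
that is, $M(P)\ge 2^{d/(2d+2)}$. This is increasing in $d$ and equals $2^{3/7}$ at $d=6$, so it settles the second assertion and the first one for all even $d\ge 6$.

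\textbf{Small degrees.} The remaining cases $d=2,4$ are where I expect the only real work.
\begin{itemize}
\item For $d=2$, irreducibility and non-cyclotomy force $|a_1|\ge3$, so $M(P)\ge (3+\sqrt5)/2>2^{3/7}$.
\item For $d=4$, write $P=x^2Q(x+1/x)$ with $Q(y)=y^2+ay+(b-2)$, where $a\equiv b\equiv 3\pmod 5$.
\end{itemize}
For $d=4$ I would first try to invoke the known minimum of the Mahler measure in degree $4$, namely $M(x^4-x^3-x^2-x+1)=\rho\approx1.722>2^{3/7}\approx1.346$.

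If a self-contained argument is preferred, I would analyse $Q$ directly:
\begin{itemize}
\item $M(P)<2^{3/7}$ forces both roots $y$ of $Q$ to be real with $|y|$ only slightly above $2$, or in $[-2,2]$.
\item This bounds $|a|\le 5$ and $|b-2|\le 6$.
\item Only finitely many pairs $(a,b)$ with $a\equiv b\equiv3\pmod5$ remain, namely $a\in\{-2,3\}$ and $b\in\{-2,3,8\}$.
\item Each of these is checked by hand: it is either reducible, cyclotomic, or has $M(P)\ge 2^{3/7}$.
\end{itemize}
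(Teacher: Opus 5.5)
Your proposal is correct and is essentially the paper's proof: the same lifts $A_d,B_d$ of $(1-x)P$ and $2(1-x)P$ modulo $5$, the same disposal of $P=R_{3,d}$ via a real root below $-2$, the bound $\|A_dB_d\|_\infty\le 25/2$ of Lemma~\ref{lem1:thm4.1} fed into Lemma~\ref{lem:resultant-mahler}, and the same hand treatment of $d=2,4$ through $Q(y)=y^2+ay+b-2$. Only small repairs are needed:
\begin{itemize}
\item The case $P\mid B_d$ is cleaner than your ``unit multiple'' remark. It would force $2+x+\cdots+x^{d-1}+2x^d=2P$, i.e.\ $2a_1=1$, which is impossible.
\item In degree $4$, your claim that $Q$ has real roots is unjustified as stated, but you do not need it. Since $|y|=|\lambda+\lambda^{-1}|\le M(P)+M(P)^{-1}<2.1$, you get $|a|,|b-2|\le4$ directly. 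This leaves $(a,b)\in\{-2,3\}^2$. The pairs $(-2,3)$ and $(3,-2)$ contain the factor $x^2-x+1$, and $(-2,-2)$ and $(3,3)$ give a root with $|y|>2.1$.
\item The value $\rho\approx1.722$ is the minimal measure only among reciprocal non-cyclotomic quartics; for instance $M(x^4-x^3-1)\approx1.380$ is smaller. Since $P$ is reciprocal, the reciprocal minimum is all you need.
\end{itemize}
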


\begin{proof}
Given that 
$$P(x)\equiv R_{3,d}(x):=1+3x+3x^2+\cdots+3x^{d-1}+x^d\pmod{5}.
$$
Let 
$$
A_d(x)=1+2x-2x^d-x^{d+1} \mbox{ and } B_d(x)=2-x+x^d-2x^{d+1}.
$$
Since $(1-x)R_{3,d}(x)=A_d(x)$,  we have $A_d(x)\equiv(1-x)P(x)\pmod5$.  Since $B_d(x)\equiv2A_d(x)\pmod5$, we see that  $B_d(x)\equiv2(1-x)P(x)\pmod5$.  Therefore 
$$5^d\mid\mathop{ \rm Res}(P,A_d) \mbox{ and } 5^d\mid\mathop{ \rm Res}(P,B_d).
$$
\noindent{\bf Claim 1.}  We have $\mathop{ \rm Res}(P,B_d) \ne 0.$

\smallskip
If possible, $P$ divides $B_d$. Since $\deg B_d=d+1$, the quotient must be linear. By comparing the constant and the leading coefficients, we get  
$$
B_d(x)=2(1-x)P(x).
$$
The coefficient of $x$ on the right hand side of the polynomial is $2a_1-2,$ whereas the coefficient of $x$ in $B_d$ is $-1$. Thus
$2a_1-2=-1,$ which is impossible for an integer $a_1$, which proves Claim 1. 

\smallskip

If $\mathop{ \rm Res}(P,A_d)=0$, then $P$ divides $A_d$. Then by comparing the constant and the leading coefficients, we get  
$ A_d(x)=(1-x)P(x)$ and hence we conclude that $P(x) = R_{3,d}(x) $. Since $d$ is even, we see that 
$$A_d(-2)=1-4-2^{d+1}+2^{d+1}=-3 \mbox{ and  } A_d(-2) = 3R_{3,d}(-2) \implies R_{3,d}(-2) = -1.
$$
Since $R_{3,d}(x)\to\infty$ as $x\to-\infty$,   it has a real zero $\leq -2$ which implies $M(R_{3,d}) = M(P) \geq 2$, as claimed. Hence we can assume that $\mathop{ \rm Res}(P,A_d)\ne 0$. 

Since $\mathop{ \rm Res}(P,B_d) \ne 0$ and $\mathop{ \rm Res}(P,A_d)\ne 0$, we see that 
$$
5^{2d} \leq |\mathop{ \rm Res}(P,A_dB_d)|.
$$
Since the degree of $A_dB_d$ is $2d+2$, by Lemma \ref{lem:resultant-mahler} and Lemma \ref{lem1:thm4.1}, we get 
$$5^{2d}\leq\left(\frac{25}{2}\right)^dM(P)^{2d+2}
$$
which gives the degree-dependent estimate. 

When $d\geq 6$, we see that $\displaystyle \frac{d}{2d+2} \geq \frac 37$. We treat the cases when $d=2$ and $d = 4$ separately as follows.

\smallskip

If $d=2$, then  $P(x)=x^2+ax+1$ where $a\equiv3\pmod5$.   If $|a|\leq2$, the only possibility is $a=-2$, and hence we get 
$P(x)=(x-1)^2$, a contradiction as $P$ has no cyclotomic factors.  If $|a|\geq3$,
$$
M(P)\geq\frac{3+\sqrt5}{2}>2^{3/7}
$$
as desired. 

Now let $d=4$. Write $P(x)=x^4+ax^3+bx^2+ax+1$ where $a, b\equiv 3\pmod 5$. 

Suppose, for a contradiction, that $M(P)<2^{3/7}<\frac32.$ Then the coefficients have to satisfy   
$$
|a|<6,\qquad |b|<9.
$$
Since $a,b\equiv3\pmod5$, we must have $a\in\{-2,3\}$ and $b\in\{-7,-2,3,8\}$.  By putting $y=x+x^{-1}$, we get
$$
P(x)=x^2Q(y) \mbox{ where }  Q(y)=y^2+ay+b-2.
$$
When $(a,b)=(-2,3)$ and $(a,b)=(3,-2)$, we see that 
$$
x^4-2x^3+3x^2-2x+1=(x^2-x+1)^2 \mbox{ and } x^4+3x^3-2x^2+3x+1=(x^2-x+1)(x^2+4x+1)
$$
and in both cases cyclotomic factors are present. Therefore these cases are not possible. 

For each remaining case with $b\neq8$, the quadratic $Q$ has a real root $\beta$ satisfying
$$|\beta|>\frac52.$$
Indeed, the largest absolute values among the roots are at least
$$1+\sqrt5,\qquad1+\sqrt{10},\qquad\frac{3+\sqrt5}{2},\qquad\frac{3+3\sqrt5}{2},$$
according to the relevant pair $(a,b)$.

\smallskip

Note that if $|\beta|\geq5/2$ and $\beta=\lambda+\lambda^{-1}$, 
then one of the two roots $\lambda$ has modulus at least $2$ which in turn implies $M(P)\geq2$. Hence we can assume that $b = 8$.  

\smallskip

If $b=8$, the roots of $Q$ are non-real and  conjugates with product $6$, and hence each has modulus $\sqrt6$. Let $r>1$ be the modulus of the root of $x^2-\beta x+1$ outside the unit circle. Then
$$
\sqrt6=|\beta|\leq r+r^{-1} \implies  r\geq\frac{\sqrt6+\sqrt2}{2},
$$
and the conjugate value of $\beta$ contributes another root of modulus $r$. Hence
$$
M(P)\geq r^2\geq2+\sqrt3>2.
$$
This completes the proof of the theorem.
\end{proof}

\subsection{Coefficients are $3\pmod{m}$ for $m\geq 7$} We prove the following result. 

\begin{theorem}\label{thm4.2} 
For any integer $m\geq 7$, and for any even integer $d\geq 2$, let $P(x) = x^d+a_{d-1}x^{d-1}+\cdots+a_1x+1\in \mathbb{Z}[x]$ be a monic irreducible and reciprocal polynomial which has no cyclotomic factors. Suppose $a_j \equiv 3\pmod{m}$ for all $1\leq j\leq d-1$. Then
$$
M(P) \geq \min\left\{2, \left(\frac{m}{6}\right)^{d/(d+1)}\right\} \geq \min\left\{2, \left(\frac{m}{6}\right)^{2/3}\right\}.
$$
\end{theorem}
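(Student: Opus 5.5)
We mimic the proof of Corollary~\ref{cor2:thm3.1} and the structure of Theorem~\ref{thm4.1}. By hypothesis,
$$P(x)\equiv R_{3,d}(x):=x^d+3x^{d-1}+\cdots+3x+1\pmod m .$$
Multiplying by $(1-x)$ gives
$$(1-x)R_{3,d}(x)=1+2x-2x^d-x^{d+1}=A_d(x),$$
so $A_d(x)=(1-x)P(x)+mS(x)$ for some $S\in\mathbb{Z}[x]$. Hence
$$\operatorname{Res}(P,A_d)=\pm m^d\operatorname{Res}(P,S).$$
The argument then splits according to whether this resultant vanishes.

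\textbf{Case $\operatorname{Res}(P,A_d)\neq 0$.} Here $|\operatorname{Res}(P,A_d)|\ge m^d$. Lemma~\ref{lem:resultant-mahler} with $\deg A_d=d+1$ gives
$$m^d\le \|A_d\|_\infty^{\,d}\,M(P)^{d+1}.$$
It remains to bound $\|A_d\|_\infty\le 6$. On $|z|=1$ we have
$$|A_d(z)|\le |1+2z|+|z^d|\,|2+z|\le 3+3=6.$$
This yields
$$M(P)\ge (m/6)^{d/(d+1)}.$$
For $m\ge 7$ the base exceeds $1$, and $d\ge2$ gives $d/(d+1)\ge 2/3$, which is the second inequality.

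\textbf{Case $\operatorname{Res}(P,A_d)=0$.} Since $P$ is irreducible, $P\mid A_d$. Comparing leading and constant coefficients, as in the proof of Theorem~\ref{thm4.1}, forces $A_d=(1-x)P$, i.e.\ $P=R_{3,d}$. We then show $M(R_{3,d})\ge 2$, exactly as in Theorem~\ref{thm4.1}. Since $d$ is even,
$$A_d(-2)=1-4-2^{d+1}+2^{d+1}=-3,$$
so $R_{3,d}(-2)=A_d(-2)/3=-1<0$. Since $R_{3,d}(x)\to+\infty$ as $x\to-\infty$, $R_{3,d}$ has a real root $\le -2$. Thus $M(P)\ge 2$ in this case.

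Combining the two cases gives the stated minimum. The only delicate point is getting the sup-norm constant $6$ cleanly; the triangle inequality suffices since the bound needs only $6$, not a sharp value. We should also check that $P=R_{3,d}$ is the only degenerate case, i.e.\ that the linear cofactor in $A_d=(1-x)P$ is pinned down by monicity together with the constant term: the constant term of $A_d$ is $1=P(0)$, and its leading coefficient is $-1$.
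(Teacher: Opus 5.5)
Your proposal is correct and follows the paper's proof essentially step for step. You use the same auxiliary polynomial $A_d(x)=1+2x-2x^d-x^{d+1}$ and the same dichotomy on whether $\operatorname{Res}(P,A_d)$ vanishes, with the degenerate case $P=R_{3,d}$ handled via $R_{3,d}(-2)=-1$, followed by Lemma~\ref{lem:resultant-mahler} and the bound $\|A_d\|_\infty\le 6$. The paper phrases that last bound through the Blaschke factorization $A_d(z)=(2+z)\bigl(\frac{1+2z}{2+z}-z^d\bigr)$, but this amounts to the same triangle-inequality estimate you give.
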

\begin{proof}
As before, we see that $(1-x)P(x)\equiv A_d(x)\pmod m,$ where
$A_d(x)=1+2x-2x^d-x^{d+1}.$

\smallskip

If $\mathop{ \rm Res}(P,A_d)=0$, then comparison of leading and constant coefficients implies that 
$A_d=(1-x)P$,  and therefore we get $P=R_{3,d}$. We have already proved that this polynomial has a real root smaller than $-2$ and hence $M(P) \geq 2$.

We can assume that the resultant is nonzero. Then
$$
m^d\leq|\mathop{ \rm Res}(P,A_d)|.
$$
Note that  the Blaschke factorization
$$
A_d(z)=(2+z)\left(\frac{1+2z}{2+z}-z^d\right)
$$
shows that
$$\|A_d\|_{\infty}\leq6.
$$
Since $\deg A_d=d+1$, by Lemma  \ref{lem:resultant-mahler}, we get 
$$
m^d\leq6^dM(P)^{d+1},
$$
which proves the theorem. 
\end{proof}

\section{Proof of Theorem \ref{thm:2,3,-1-residues}: the case $r = m-1$}\label{sec:5}

\subsection{Assume that $m = 5$} We shall start with the following lemma. 

\begin{lemma}\label{lem1:thm5.1} For an even integer $d\geq 2$, let 
$$
 C_d(z)=1-2z+2z^d-z^{d+1} \mbox{  and }  D_d(z)=2+z-z^d-2z^{d+1}$$
 be polynomials of degree $d+1$ over $\mathbb{C}$. Then 
$$
\|C_dD_d\|_{\infty}\leq\frac{25}{2}.
$$
\end{lemma}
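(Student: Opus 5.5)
The plan is to follow the argument of Lemma~\ref{lem1:thm4.1} verbatim, since $C_d$ and $D_d$ are obtained from $A_d$ and $B_d$ by the sign change $z\mapsto -z$ in the non-$z^d$ parts. Put $z=e^{it}$, $u=z^d$, $c=\cos t$, $s=\sin t$. First I will regroup the factors as polynomials in $u$ with coefficients linear in $z$:
$$
C_d(z)=(1-2z)+(2-z)u,\qquad D_d(z)=(2+z)-(1+2z)u .
$$

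Next I will multiply these out. The three coefficients are
$$
(1-2z)(2+z)=2-3z-2z^2,
$$
$$
-(1-2z)(1+2z)+(2-z)(2+z)=3+3z^2,
$$
$$
-(2-z)(1+2z)=-2-3z+2z^2 .
$$
Hence
$$
C_d(z)D_d(z)=(2-3z-2z^2)+3(1+z^2)u+(2z^2-3z-2)u^2 .
$$

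Then I divide by $z$ and use $\bar z=z^{-1}$ on the unit circle. This gives
$$
z^{-1}C_d(z)D_d(z)=-(3+4is)+6c\,u+(-3+4is)u^2 .
$$
A further division by $u$, with $|u|=1$, yields
$$
u^{-1}z^{-1}C_dD_d=6c-\bigl((3+4is)\bar u+(3-4is)u\bigr).
$$
The bracketed expression is the sum of a complex number and its conjugate. Its modulus is therefore at most $2|3+4is|=2\sqrt{9+16s^2}$, and so
$$
|C_d(z)D_d(z)|\le 2\sqrt{9+16s^2}+6|c|.
$$

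Finally, I will write $9+16s^2=25-16c^2$ and set $X=\sqrt{25-16c^2}$, $Y=4|c|$, so that $X^2+Y^2=25$. The bound becomes $2X+\tfrac32Y$, and Cauchy--Schwarz gives
$$
2X+\tfrac32 Y\le\sqrt{4+\tfrac94}\,\sqrt{25}=\tfrac52\cdot 5=\tfrac{25}{2}.
$$
Taking the maximum over $t$ then proves the lemma.

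The only delicate step is the algebra of the product coefficients, in particular checking that after dividing by $zu$ the outer terms really are conjugate. That check succeeds because the $u^0$ and $u^2$ coefficients are, up to the factor $z$, negatives of reciprocal conjugates of each other. Evenness of $d$ is not actually needed for this estimate.
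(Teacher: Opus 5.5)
Your proof is correct and is essentially the paper's own argument. It uses the same regrouping $C_d=(1-2z)+(2-z)u$, $D_d=(2+z)-(1+2z)u$, the same expansion, the same normalization by $zu$ that makes the outer terms complex conjugates, and the same Cauchy--Schwarz step with $X^2+Y^2=25$; the paper simply notes that $z^{-1}C_dD_d$ is, up to sign, the expression from Lemma~\ref{lem1:thm4.1} with $s$ replaced by $-s$.

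Two of your side remarks are slightly off, though neither affects the argument, since your explicit computation is self-contained and correct. First, $C_d,D_d$ come from $A_d,B_d$ via $z\mapsto -z$ together with $u\mapsto -u$, not $z\mapsto -z$ alone. Second, the $u^2$ coefficient $2z^2-3z-2$ is exactly the reciprocal of the $u^0$ coefficient $2-3z-2z^2$, not its negative.
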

\begin{proof} 
The proof is analogous to that of Lemma \ref{lem1:thm4.1} and we give a sketch of the proof here. 
Write $C_d(z)=(1-2z)+z^d(2-z)$ and $D_d(z)=(2+z)-z^d(1+2z)$. Multiplying out,
$$
C_d(z)D_d(z)=(2-3z-2z^2)+3(1+z^2)z^d-(2+3z-2z^2)z^{2d}.
$$
With $z=e^{it}$, $u=z^d$, $c=\cos t$ and $s=\sin t$, this gives
$$
z^{-1}C_d(z)D_d(z)=-\Bigl[(3+4is)-6cu+(3-4is)u^2\Bigr],
$$
which is the expression in the proof of Lemma \ref{lem1:thm4.1} with $s$ replaced by $-s$. The rest of that proof applies verbatim and gives $\|C_dD_d\|_\infty\le25/2$.
\end{proof}

\begin{theorem}\label{thm5.1}
For an even integer $d\geq 2$, let $P(x) = x^d+a_{d-1}x^{d-1}+\cdots+a_1x+1\in \mathbb{Z}[x]$ be a monic irreducible and reciprocal polynomial which has no cyclotomic factors. Suppose $a_j \equiv -1\pmod{5}$ for all $1\leq j\leq d-1$. Then $M(P) \geq 2^{3/7}$. Moreover, if $d\geq 6$, then $M(P) \geq 2^{d/(2d+2)} \geq 2^{3/7}.$
\end{theorem}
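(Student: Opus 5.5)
The plan is to mirror the proof of Theorem~\ref{thm4.1}, with $C_d$ and $D_d$ from Lemma~\ref{lem1:thm5.1} taking the place of $A_d$ and $B_d$. Modulo $5$ we have
$$P(x)\equiv R_{-1,d}(x):=1-x-x^2-\cdots-x^{d-1}+x^d,$$
and a direct computation gives $(1-x)R_{-1,d}(x)=1-2x+2x^d-x^{d+1}=C_d(x)$. Hence $C_d\equiv(1-x)P\pmod 5$. Also $2C_d=2-4x+4x^d-2x^{d+1}\equiv 2+x-x^d-2x^{d+1}=D_d\pmod 5$, so $D_d\equiv 2(1-x)P\pmod 5$. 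As in Lemma~\ref{lem:reslower}, subtracting multiples of $P$ shows $5^d\mid\operatorname{Res}(P,C_d)$ and $5^d\mid\operatorname{Res}(P,D_d)$.

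\textbf{Nonvanishing of the resultants.} If $P\mid D_d$, then comparing degrees and leading and constant coefficients forces $D_d=2(1-x)P$. The coefficient of $x$ then gives $2a_1-2=1$, which is impossible for an integer $a_1$. If $P\mid C_d$, then $C_d=(1-x)P$, so $P=R_{-1,d}$. This is the step I expect to need the most care, since we must show $M(R_{-1,d})\ge 2^{3/7}$ directly. We have $C_d(2)=1-4+2^{d+1}-2^{d+1}=-3$, so $R_{-1,d}(2)=C_d(2)/(1-2)=3>0$, which does not immediately give a sign change. Instead I would locate a real root in $(1,2)$. One can check that $R_{-1,d}(3/2)<0$ for $d\ge 4$, using the same computation as in Lemma~\ref{lem:newrdc} with $b=2$: $C_d(3/2)=(1-3)+(3/2)^d(2-3/2)>0$ and dividing by $1-3/2<0$ gives $R_{-1,d}(3/2)<0$ once $(3/2)^d/2>2$. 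Together with $R_{-1,d}(2)=3>0$, this gives a root greater than $3/2>2^{3/7}$. For $d=2$ we have $R_{-1,2}=x^2-x+1$, which is cyclotomic and so excluded.

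\textbf{The main estimate.} With both resultants nonzero, $5^{2d}\le|\operatorname{Res}(P,C_dD_d)|$, and $\deg(C_dD_d)=2d+2$. Lemma~\ref{lem:resultant-mahler} and Lemma~\ref{lem1:thm5.1} then give $5^{2d}\le(25/2)^dM(P)^{2d+2}$, that is, $M(P)\ge 2^{d/(2d+2)}$. For $d\ge 6$ this is at least $2^{3/7}$.

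\textbf{Small degrees.} For $d=2$, write $P=x^2+ax+1$ with $a\equiv 4\pmod 5$. The candidates with $|a|\le 2$ give $a=-1$, which is cyclotomic, so $|a|\ge 3$ and $M(P)\ge(3+\sqrt5)/2$. For $d=4$, write $P=x^4+ax^3+bx^2+ax+1$ and suppose $M(P)<2^{3/7}$. This forces $|a|<6$ and $|b|<9$, so $a\in\{-1,4\}$ and $b\in\{-6,-1,4\}$. For each pair, write $P=x^2Q(x+x^{-1})$ with $Q(y)=y^2+ay+b-2$, and either exhibit a cyclotomic factor or find a root $\beta$ of $Q$ with $|\beta|\ge 5/2$ (or a complex pair), which forces $M(P)\ge 2$. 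The main obstacle is making this finite check and the root location for $R_{-1,d}$ airtight; the analytic part is already contained in Lemma~\ref{lem1:thm5.1}.
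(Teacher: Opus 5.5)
Your route is the paper's: the same lifts $C_d,D_d$, the same coefficient-of-$x$ argument ruling out $P\mid D_d$, and the same use of Lemma~\ref{lem:resultant-mahler} with Lemma~\ref{lem1:thm5.1}. For $P=R_{-1,d}$, you locate a root in $(3/2,2)$ via the computation of Lemma~\ref{lem:newrdc} with $c=-2$. That step is valid and slightly sharper than the paper's evaluation at $x=\sqrt2$.

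The gap is in the $d=4$ finite check, which does not go through as you state it.

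\textbf{The pair $(a,b)=(-1,-1)$.} Here $Q(y)=y^2-y-3$ has real roots $(1\pm\sqrt{13})/2$. So no root has modulus $\ge 5/2$, and there is no complex pair. In fact $P=x^4-x^3-x^2-x+1$ has $M(P)\approx 1.722<2$, so your claim that every case ``forces $M(P)\ge 2$'' is false. The case is harmless only because this $P$ is exactly $R_{-1,4}$, which your resultant-vanishing step already bounds below by $3/2$. Alternatively, use a lower threshold: $|\beta|>13/6=3/2+2/3$ already gives a root of modulus $>3/2$, and this is what the paper does.

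\textbf{The pair $(a,b)=(-1,4)$.} Here $Q(y)=y^2-y+2$ has roots $\beta=(1\pm i\sqrt7)/2$ of modulus $\sqrt2<2$. So the device from Theorem~\ref{thm4.1}, namely $|\beta|\le r+r^{-1}$, yields nothing, and ``or a complex pair'' is an unproved assertion. You need a different estimate:
\begin{itemize}
\item Write the root of $x^2-\beta x+1$ outside the unit circle as $re^{i\theta}$.
\item Then $\operatorname{Im}\beta=(r-r^{-1})\sin\theta$, so $r-r^{-1}\ge\sqrt7/2$.
\item Hence $r\ge(\sqrt7+\sqrt{23})/4>3/2$.
\item Since $\bar\beta$ contributes a second root of modulus $r$, $M(P)\ge r^2$.
\end{itemize}
With these two repairs your proof is complete.
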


\begin{proof}
By reducing modulo $5$, we get 
$$P(x)\equiv R_{-1,d}(x):=1-x-x^2-\cdots-x^{d-1}+x^d \pmod{5}.
$$
Define
$$
C_d(x)=1-2x+2x^d-x^{d+1} \mbox{  and } D_d(x)=2+x-x^d-2x^{d+1}.
$$
Then, we get
$$
C_d(x)=(1-x)R_{-1,d}(x) \mbox{ and hence } C_d(x)\equiv(1-x)P(x)\pmod5.
$$
Also, we see that  $D_d(x)\equiv2C_d(x)\pmod5$.  Therefore
$$
5^d\mid\mathop{ \rm Res}(P,C_d) \mbox{ and }  5^d\mid\mathop{ \rm Res}(P,D_d).
$$
We claim that $\mathop{ \rm Res}(P,D_d)\ne 0$. For if $P$ divides $D_d$, then by comparing  the leading and the constant coefficients, we get $D_d(x)=2(1-x)P(x).$ The coefficient of $x$ on the right hand side is $2a_1-2,$ whereas the coefficient of $x$ in $D_d$ is $1$. Thus we get $2a_1-2=1,$ which is impossible.

\smallskip

If $\mathop{ \rm Res}(P,C_d) = 0$, then $P$ divides $C_d$. By comparing the leading and the constant coefficients, we get 
$C_d(x)=(1-x)P(x),$  and hence $P=R_{-1,d}$.

When $d=2$, we see that $R_{-1,2}(x) = x^2 - x+1 = \Phi_6$, and hence $P \ne R_{-1,2}$, by hypothesis. For even integer $d\geq 4$, we see that $(x-1)R_{-1,d}(x)=x^d(x-2)+2x-1.$  When we plug $x=\sqrt2$, we get, 
$$
x^d(x-2)+2x-1\leq4(\sqrt2-2)+2\sqrt2-1=6\sqrt2-9<0.
$$
Since $R_{-1,d}(x)\to \infty$ as $x\to \infty$, it has a real root larger than $\sqrt2$. Hence $M(P) = M(R_{-1,d}) \geq \sqrt2$, as desired. Therefore, we can assume that $\mathop{ \rm Res}(P,C_d) \ne 0$. 

Since both the resultants are nonzero, by Lemma \ref{lem:resultant-mahler},  we get 
$$
5^{2d}\leq\left(\frac{25}{2}\right)^dM(P)^{2d+2},
$$
which gives the degree-dependent estimate. 

If $d=2$, then $P(x)=x^2+ax+1$ where  $a\equiv-1\pmod5.$ The value $a=-1$ gives $\Phi_6$, cyclotomic polynomial. Every other admissible value satisfies $|a|\geq4$, and hence
$$M(P)\geq2+\sqrt3>2^{3/7}.$$

Now let $P(x)=x^4+ax^3+bx^2+ax+1.$ Suppose, for a contradiction, that $M(P)<2^{3/7}<\frac32.$
Then $|a|<6,\qquad |b|<9,$ and hence  $a\in\{-1,4\}$ and  $b\in\{-6,-1,4\}.$
As in the proof of  Theorem \ref{thm4.1}, we let  $P(x)=x^2Q(x+x^{-1}),$
where $Q(y)=y^2+ay+b-2.$

For every pair $(a, b)$,  except  when $(a,b)=(-1,4)$, the polynomial $Q$ has a real root $\beta$ satisfying
$\displaystyle |\beta|>\frac{13}{6}.$ Indeed, the relevant largest absolute roots are
$$\frac{1+\sqrt{33}}2,\qquad\frac{1+\sqrt{13}}2,\qquad2+2\sqrt3,\qquad2+\sqrt7,\qquad2+\sqrt2.
$$
Since
$$|\beta|>\frac{13}{6}=\frac32+\frac23,
$$
then the equation $\lambda+\lambda^{-1}=\beta$  has a root with modulus greater than $3/2$ and thus  we get $M(P)>3/2$.

When $(a,b)=(-1,4)$, we have  $Q(y)=y^2-y+2,$ whose roots are
$$
\beta=\frac{1\pm i\sqrt7}{2}.
$$
Write the root outside the unit circle of
$$x^2-\beta x+1$$
as
$$\lambda=re^{i\theta},\qquad r>1.$$
Then we get
$$
\left|\mathop{\rm Im}\beta\right|=|(r-r^{-1})\sin\theta|\leq r-r^{-1}.
$$
Therefore
$$
r-r^{-1}\geq\frac{\sqrt7}{2},
$$
and hence
$$
r\geq\frac{\sqrt7+\sqrt{23}}4>\frac32.
$$
The conjugate value of $\beta$ contributes another root of modulus $r$. Thus, $M(P)\geq r^2>3/2$.
This proves the theorem.
\end{proof}

\subsection{Assume $m =3$} We start with the following lemma.

\begin{lemma}\label{lem2:thm5.2} For an even integer $d\geq 2$, let 
$$
 E_d(z)= 1+z-z^d - z^{d+1}, F_d(z) = 2 - z +z^d - 2z^{d+1}, \mbox{  and }  G_d(z)=-1+2z-2z^d+z^{d+1}$$
 be polynomials of degree $d+1$ over $\mathbb{C}$. Then 
$$
\|E_dF_dG_d\|_{\infty}\leq 12\sqrt{3}.
$$
\end{lemma}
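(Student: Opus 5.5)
The plan is to reproduce the mechanism of Lemma~\ref{lem1:thm4.1}: write $z=e^{it}$ and $u=z^d$, factor each polynomial into a part depending only on $z$ and a part depending on $u$, and then reduce to an explicit optimization in real variables.

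First, the factor $E_d$ splits completely:
$$E_d(z)=(1+z)(1-u),\qquad\text{so}\qquad |E_d(z)|=|1+z|\,|1-u|=2|\cos(t/2)|\cdot 2|\sin(dt/2)|.$$
For $F_d$ and $G_d$, put $a=2-z$. On $|z|=1$ one has $1-2z=-z\bar a$. This gives
$$F_d=a-uz\bar a,\qquad G_d=z\bar a-ua.$$
Writing $\omega=z\bar a/a=(2z-1)/(2-z)$, which is unimodular because it is a Blaschke factor, we get
$$|F_dG_d|=|a|^2\,|1-u\omega|\,|1-u\bar\omega|.$$

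The key identity, which is the analogue of the ``conjugate terms'' trick in Lemma~\ref{lem1:thm4.1}, comes from dividing by $u$:
$$u^{-1}(1-u\omega)(1-u\bar\omega)=\bar u+u-(\omega+\bar\omega)=2\cos(dt)-2\operatorname{Re}\omega,$$
which is real. Hence
$$|E_dF_dG_d|=|1+z|\,(5-4\cos t)\cdot 2|\sin(dt/2)|\cdot 2\,|\cos(dt)-\operatorname{Re}\omega(t)|.$$
A direct computation should give
$$\operatorname{Re}\omega=\frac{4\cos t-5+?}{5-4\cos t};$$
I will compute it exactly as a rational function of $c=\cos t$.

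Next, decouple $\theta=dt$ from $t$ by treating $\theta$ as a free angle. Since $\cos\theta=1-2\sin^2(\theta/2)$, set $s=\sin^2(\theta/2)\in[0,1]$ and $\gamma=\operatorname{Re}\omega\in[-1,1]$. The $\theta$-dependent part becomes
$$4\sqrt{s}\,\bigl|1-2s-\gamma\bigr|.$$
Maximizing over $s\in[0,1]$ is a one-variable calculus problem. The maximum occurs either at $s=1$, with value $4(1+\gamma)$, or at an interior critical point of $\sqrt s(1-\gamma-2s)$, which lies at $s=(1-\gamma)/6$. This yields an explicit bound $\Psi(\gamma)$. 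The problem then reduces to showing
$$2|\cos(t/2)|\,(5-4\cos t)\,\Psi(\gamma(t))\le 12\sqrt3\qquad\text{for all }t.$$
This last inequality is in the single variable $c=\cos t$, with $|\cos(t/2)|=\sqrt{(1+c)/2}$.

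The main obstacle is this final one-variable inequality. Decoupling $u$ from $z$ loses information, and the bound must still come out as exactly $12\sqrt3$. A sanity check at $t=\pm\pi/3$ gives $|1+z|=\sqrt3$ and $|a|^2=3$, so the prefactor is $3\sqrt3$ and the remaining factor must be at most $4$. This suggests that equality is attained near $c=1/2$, and that the interior critical point, rather than $s=1$, is the relevant branch there.

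I would first try to prove the inequality by squaring and clearing denominators to get a polynomial inequality in $c\in[-1,1]$, then check it by factoring or by a Cauchy--Schwarz step as in Lemma~\ref{lem1:thm4.1}. If the decoupled bound turns out to exceed $12\sqrt3$ for some $c$, I would fall back on a cruder split: use $|1-2s-\gamma|\le|1-2s|+|\gamma|$ on one range of $c$ and the exact maximization on the other.
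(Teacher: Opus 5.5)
Your plan is the paper's proof in different notation: the paper also writes $E_d=(1+z)(1-u)$, $F_d=a-ub$, $G_d=b-ua$ with $b=z\bar a$ and $\operatorname{Re}(b/a)=(5c-4)/(5-4c)$ (this is your missing $\operatorname{Re}\omega$), treats $u=z^d$ as a free unimodular variable, and maximizes $v|v^2-h|$ over the same two branches; this quantity is your $\sqrt{s}\,|1-2s-\gamma|$ with $v^2=s$ and $h=(1-\gamma)/2$. The final one-variable inequality holds by your squaring plan: the $s=1$ branch gives $16\bigl(\tfrac{1+c}{2}\bigr)^{3/2}\le 16$, and the interior branch gives $24\sqrt3\sqrt{(1-c)^3(1+c)/(5-4c)}\le 12\sqrt3$ via the identity $5-4c-4(1-c)^3(1+c)=(2c^2-2c-1)^2$, so the extremum is at $c=(1-\sqrt3)/2$ rather than near $c=1/2$ (where the bound is only $6\sqrt3$), and no fallback is needed.
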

\begin{proof} 
We let
$$
z=e^{it},\qquad u=z^d=e^{i\phi},\qquad c=\cos t.
$$
Also, by letting $a=2-z,\qquad b=2z-1$, we see that
$$
F_d(z)=a-ub,  G_d(z)=b-ua, \mbox{  and } E_d(z)=(1+z)(1-u).
$$
On the unit circle, $b=z\overline a$ and $|a|^2=5-4c.$
Write
$$a=\varrho e^{i\theta},\qquad\varrho^2=5-4c,  \mbox{ and put }
q=\frac{\phi}{2},\qquad\delta=\frac{t-2\theta}{2}.
$$
A direct calculation implies
$$
|F_d(z)G_d(z)|=4\varrho^2|\sin(q+\delta)\sin(q-\delta)|.
$$
Also,
$$|E_d(z)|=4\sqrt{\frac{1+c}{2}}|\sin q|.
$$
By  setting $v=|\sin q|$  and  $h=\sin^2\delta$, we get
$$
|E_d(z)F_d(z)G_d(z)|=16\varrho^2\sqrt{\frac{1+c}{2}}v|v^2-h|.
$$
For $0\leq v,h\leq1$, we get
$$
v|v^2-h|\leq\max\left\{1-h,\frac{2h^{3/2}}{3\sqrt3}\right\}.
$$
For, indeed, on the interval $v^2\geq h$, the function $v(v^2-h)$ is increasing and has maximum $1-h$. On the interval $v^2\leq h$, the function $v(h-v^2)$ has its maximum at $v=\sqrt{h/3}$. Now
$$
\cos(2\delta)=\mathop{\rm Re}\left(\frac{b}{a}\right)=\frac{5c-4}{5-4c},
$$
and hence  $\displaystyle h=\frac{9(1-c)}{2(5-4c)}.$

For the first branch,
$$
|E_d(z)F_d(z)G_d(z)| \leq 16(5-4c)\sqrt{\frac{1+c}{2}}(1-h)=16\left(\frac{1+c}{2}\right)^{3/2}\leq16.
$$
For the second branch, the corresponding quantity is
$$
|E_d(z)F_d(z)G_d(z)| \leq 24\sqrt3\sqrt{\frac{(1-c)^3(1+c)}{5-4c}}.
$$
The identity
$$
5-4c-4(1-c)^3(1+c)=(2c^2-2c-1)^2 \implies \frac{(1-c)^3(1+c)}{5-4c}\leq\frac14.
$$
Thus the second branch is at most $12\sqrt3.$  Since $16<12\sqrt3$, the result follows.  
\end{proof}

\begin{theorem}\label{thm5.2}
For an even integer $d\geq 2$, let $P(x) = x^d+a_{d-1}x^{d-1}+\cdots+a_1x+1\in \mathbb{Z}[x]$ be a monic irreducible and reciprocal polynomial which has no cyclotomic factors. Suppose $a_j \equiv -1\pmod{3}$ for all $1\leq j\leq d-1$. Then 
$$
M(P) \geq \left(\frac{3\sqrt3}{4}\right)^{d/(3d+3)} \geq \left(\frac{3\sqrt3}{4}\right)^{2/9}>1.$$ 
\end{theorem}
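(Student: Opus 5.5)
Modulo $3$ we have $-1\equiv 2$, so $P(x)\equiv R_{-1,d}(x):=1-x-x^2-\cdots-x^{d-1}+x^d \pmod 3$, and three degree-$(d+1)$ polynomials are congruent modulo $3$ to multiples of $(1-x)P$:
\begin{itemize}
\item $G_d(x)=-1+2x-2x^d+x^{d+1}=-(1-x)R_{-1,d}(x)\equiv -(1-x)P(x)$;
\item $F_d(x)\equiv 2(1-x)P(x)\equiv -(1-x)P(x)$, since $F_d=2-x+x^d-2x^{d+1}\equiv -1-x+x^d+x^{d+1}\equiv G_d\pmod 3$;
\item $E_d(x)=1+x-x^d-x^{d+1}=(1+x)(1-x^d)$, with $E_d\equiv -G_d\pmod 3$ as well.
\end{itemize}
Writing each of them as $\epsilon(1-x)P+3S$ gives $3^d\mid\operatorname{Res}(P,E_d)$, $3^d\mid\operatorname{Res}(P,F_d)$ and $3^d\mid\operatorname{Res}(P,G_d)$. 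The plan is to show all three resultants are nonzero, which gives $3^{3d}\le|\operatorname{Res}(P,E_dF_dG_d)|$. Since $\deg(E_dF_dG_d)=3d+3$, Lemma~\ref{lem:resultant-mahler} and Lemma~\ref{lem2:thm5.2} then yield
$$3^{3d}\le (12\sqrt3)^d M(P)^{3d+3},$$
so $M(P)^{3d+3}\ge (27/(12\sqrt3))^d=(3\sqrt3/4)^d$. This is the first bound, and $d/(3d+3)\ge 2/9$ for $d\ge2$ gives the second.

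The nonvanishing is handled factor by factor.
\begin{itemize}
\item \emph{$E_d$.} It is a product of cyclotomic polynomials and $P$ has no cyclotomic factor, so $\operatorname{Res}(P,E_d)\neq0$.
\item \emph{$F_d$.} If $P\mid F_d$, comparing leading and constant coefficients forces $F_d=2(1-x)P$. The $x$-coefficient then gives $2a_1-2=-1$, which is impossible (this is exactly Claim 1 in Theorem~\ref{thm4.1}).
\item \emph{$G_d$.} If $P\mid G_d$, comparing coefficients gives $G_d=-(1-x)P$, hence $P=R_{-1,d}$.
\end{itemize}

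The case $P=R_{-1,d}$ is the main obstacle, since here the resultant argument does not apply and I must bound $M(P)$ directly. For $d=2$, $R_{-1,2}=\Phi_6$ is cyclotomic, which is excluded by hypothesis. For $d\ge4$, the computation in Theorem~\ref{thm5.1} gives $(x-1)R_{-1,d}(x)=x^d(x-2)+2x-1<0$ at $x=\sqrt2$, so $R_{-1,d}$ has a real root exceeding $\sqrt2$ and $M(P)\ge\sqrt2$. It remains to compare: $(3\sqrt3/4)^{d/(3d+3)}\le(3\sqrt3/4)^{1/3}\approx1.10<\sqrt2$, so the claimed bound holds in this case too.
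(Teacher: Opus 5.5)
Your proposal is correct and follows the paper's proof essentially step by step. It uses the same three auxiliary polynomials $E_d,F_d,G_d$ and the same nonvanishing arguments: the cyclotomic factors of $E_d$, the parity obstruction $2a_1-2=-1$ for $F_d$, and the exceptional case $P=R_{-1,d}$ handled via a real root exceeding $\sqrt2$. It then combines Lemma~\ref{lem:resultant-mahler} with Lemma~\ref{lem2:thm5.2} exactly as the paper does. You also verify the mod-$3$ congruences explicitly and check that $\sqrt2>(3\sqrt3/4)^{d/(3d+3)}$ in the exceptional case; the paper leaves both implicit.
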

\begin{proof}
If $P=R_{-1,d}$ and $d\geq4$, then $M(P)>\sqrt2$, as in the proof of Theorem \ref{thm5.1}. The case $d=2$ is $\Phi_6$, which is  excluded, as $P$ has no cyclotomic factors.

Now we prove that  all three resultants
$$
\mathop{ \rm Res}(P,E_d) \ne 0,\qquad\mathop{ \rm Res}(P,F_d)\ne 0,\qquad\mathop{ \rm Res}(P,G_d)\ne 0
$$
where $E_d, F_d$, and $G_d$ are as defined in Lemma \ref{lem2:thm5.2}.  

Since $E_d$ is a product of cyclotomic polynomials,  $P$  cannot divide it. If $P$ divides $F_d$, by comparing the  constant and the leading coefficients, we get   $F_d=2(1-x)P.$ Then, by comparing  the coefficient of $x$, we get 
$2a_1-2=-1,$ which is impossible. Finally, if $P$ divides $G_d$, then by comparing the leading and the constant coefficients, we get
$G_d=(x-1)P.$ Therefore $P=R_{-1,d},$  and in this case, we have already proved in Theorem \ref{thm5.1} that $M(P) \geq \sqrt{2}$. 
Thus, we can assume that all three resultants are nonzero. Each is divisible by $3^d$, and hence 
$$
3^{3d}\leq|\mathop{ \rm Res}(P,E_dF_dG_d)|.
$$
The product has degree $3d+3$, and its unit-circle norm is at most $12\sqrt3$ by Lemma \ref{lem2:thm5.2}. Therefore
$$3^{3d}\leq(12\sqrt3)^dM(P)^{3d+3}.
$$
Since $\displaystyle\frac{27}{12\sqrt3}=\frac{3\sqrt3}{4},$ we get the assertion. 
\end{proof}

\subsection{Assume $m\geq 7$}  We prove the following.

\begin{theorem}\label{thm5.3}

Let $m\ge 7$, and let $d\ge 2$ be even. Suppose
$P(x)=x^d+a_{d-1}x^{d-1}+\cdots+a_1x+1\in\mathbb Z[x]$
is monic irreducible and reciprocal, has no cyclotomic factors,
and $a_j\equiv-1\pmod m$ for $1\le j\le d-1$.
Then $$M(P)\ge\min\left\{\rho,
\left(\frac m6\right)^{d/(d+1)}\right\}
\ge\min\left\{\rho,\left(\frac m6\right)^{2/3}\right\},$$
where $\rho$ is the largest real root of
$x^4-x^3-x^2-x+1$.
\end{theorem}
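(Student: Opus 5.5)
We follow the template of Theorem \ref{thm4.2}, with the polynomial $C_d$ of Theorem \ref{thm5.1} in place of $A_d$. Reducing modulo $m$ gives $P(x)\equiv R_{-1,d}(x)=1-x-x^2-\cdots-x^{d-1}+x^d\pmod m$. Multiplying by $1-x$, we get
$$
(1-x)P(x)\equiv C_d(x)=1-2x+2x^d-x^{d+1}\pmod m .
$$
Writing $C_d(x)=(1-x)P(x)+mS(x)$ with $S\in\mathbb{Z}[x]$ gives $\operatorname{Res}(P,C_d)=m^d\operatorname{Res}(P,S)$. So either $\operatorname{Res}(P,C_d)=0$, or $|\operatorname{Res}(P,C_d)|\geq m^d$.

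\textbf{Exceptional case.} Suppose $\operatorname{Res}(P,C_d)=0$. Since $P$ is irreducible, $P$ divides $C_d$, and the quotient is linear. Comparing leading and constant coefficients forces $C_d=(1-x)P$, hence $P=R_{-1,d}$.
\begin{itemize}
\item For $d=2$ this is $\Phi_6$, which is excluded.
\item For $d=4$, $R_{-1,4}=x^4-x^3-x^2-x+1$. It is reciprocal with a real root $\rho>1$, so $M(P)\ge\rho$. This explains the constant $\rho$.
\item For even $d\ge 6$, I would show $M(R_{-1,d})\geq\rho$. Consider $g_d(x)=x^d(x-2)+2x-1$, so that $(x-1)R_{-1,d}(x)=g_d(x)$. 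We have $g_d(\rho)=\rho^d(\rho-2)+2\rho-1$. Since $1<\rho<2$, the term $\rho^d(\rho-2)$ decreases in $d$. As $g_4(\rho)=0$, this gives $g_d(\rho)<0$ for $d>4$. Since $g_d(x)\to+\infty$, $R_{-1,d}$ has a real root larger than $\rho$.
\end{itemize}
Checking $1<\rho<2$ and this monotonicity is the one genuinely new verification in the proof. I expect it to be straightforward, although it has to be made precise.

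\textbf{Generic case.} Now $|\operatorname{Res}(P,C_d)|\ge m^d$. Use the Blaschke-type factorization
$$
C_d(z)=(2-z)\left(\frac{1-2z}{2-z}+z^d\right).
$$
On $|z|=1$ we have $|(1-2z)/(2-z)|=1$ and $|2-z|\le 3$, so $\|C_d\|_\infty\le 6$. Since $\deg C_d=d+1$, Lemma \ref{lem:resultant-mahler} yields
$$
m^d\le 6^d M(P)^{d+1},
$$
that is, $M(P)\ge (m/6)^{d/(d+1)}$.

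\textbf{Conclusion.} Combining the two cases gives $M(P)\ge\min\{\rho,(m/6)^{d/(d+1)}\}$. Since $d\ge2$ implies $d/(d+1)\ge 2/3$ and $m/6>1$ for $m\ge7$, the weaker bound $\min\{\rho,(m/6)^{2/3}\}$ follows. The main obstacle is the uniform bound $M(R_{-1,d})\ge\rho$ for all even $d\ge4$; the rest is routine.
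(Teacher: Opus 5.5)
Your proof is correct and follows the paper's argument essentially step for step. It uses the same auxiliary polynomial $C_d\equiv(1-x)P\pmod m$, the same reduction of the vanishing-resultant case to $P=R_{-1,d}$ (excluded for $d=2$, and for $d\ge4$ handled by monotonicity of $x^d(x-2)$ on $(1,2)$, which you apply at $x=\rho$ while the paper states it pointwise), and the same Blaschke bound $\|C_d\|_\infty\le 6$ fed into Lemma~\ref{lem:resultant-mahler}. The check $1<\rho<2$ you flag is immediate from $R_{-1,4}(1)=-1$ together with $R_{-1,4}(x)\ge x^3-x^2-x+1=(x-1)^2(x+1)>0$ for $x\ge2$.
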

\begin{proof}
Consider the auxiliary polynomial $C_d(x)=1-2x+2x^d-x^{d+1}.$ Then 
we have $C_d(x)\equiv(1-x)P(x)\pmod m.$ 

If the resultant vanishes, then $C_d=(1-x)P$,  and hence  $P=R_{-1,d}$.  
In this case, when $d = 2$, $R_{-1,d}$ is cyclotomic and hence we assume that $d\geq 4$. Note that the largest real root of $R_{-1,4} = x^4 - x^3-x^2-x+1$, say, $\rho$ which lies in $(1, 2)$ and $\rho \geq 1.72208\ldots$. For all $1< x < 2$ and any $d\geq 4$, we have $x^d(x-2) \leq x^4(x-2)$. Hence we get
$$
(x-1)R_{-1, d}(x) \leq (x-1)R_{-1,4}(x).
$$
Hence $R_{-1,d}(x)$ has a real root at least $\rho$. Therefore, $M(P) = M(R_{-1,d}) \geq \rho$ for all $d\geq 4.$
 Thus, we can assume that $\mathop{ \rm Res}(P,C_d) \ne 0$. Therefore, we get 
$$m^d\leq|\mathop{ \rm Res}(P,C_d)|.
$$
The Blaschke factorization leads to 
$$C_d(z)=(2-z)\left(z^d+\frac{1-2z}{2-z}\right)
$$
and the identity $|1-2z|=|2-z|$ on $|z|=1$ imply that
$$\|C_d\|_{\infty}\leq6.
$$
Therefore
$$
m^d\leq6^dM(P)^{d+1},$$
which proves the asserted lower bound.
\end{proof}

\section{The case $m = 5, a=4, b=2$ of Proof of Theorem \ref{thm:abmodm}}\label{sec:7}

We shall start with the following lemma.

\begin{lemma}\label{lem1:thm8.1}
Let $d\geq 2$ be an even integer. Let $A_d(z) = (1-z+z^2)(1-z^d)$ and $B_d(z) = (-3-2z+2z^2) - z^d(2-2z-3z^2)$ be complex polynomials of degree $d+2$. Then
$$
\|A_dB_d\|_\infty \leq 25 \frac{975}{976}<25.
$$
\end{lemma}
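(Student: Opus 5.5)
The plan is to reduce the estimate to a one-variable maximization, using the same conjugate-symmetry trick as in Lemma \ref{lem1:thm4.1} and Lemma \ref{lem2:thm5.2}. Put $z=e^{it}$, $c=\cos t$ and $u=z^d=e^{i\phi}$. Since $\phi$ depends on $d$ while $t$ does not, I will treat $\phi$ as a free parameter. This is what makes the bound independent of $d$.

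For the first factor, $|1-z+z^2|=|z^{-1}-1+z|=|2c-1|$ and $|1-u|=2|\sin(\phi/2)|$. For the second factor, write $a(z)=-3-2z+2z^2$ and $b(z)=2-2z-3z^2$. On $|z|=1$ one checks $z^2\overline{a(z)}=-3z^2-2z+2=b(z)$. Next set $a=zw$ with $w=2z-2-3\bar z=-(2+c)+5i\sin t$, so that $|w|^2=(2+c)^2+25(1-c^2)$. Then
$$B_d(z)=z\bigl(w-u\,\overline{w}\bigr).$$
Writing $w=|w|e^{i\theta}$ gives $|B_d(z)|=2|w|\,|\sin(\phi/2-\theta)|$. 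Hence
$$|A_d(z)B_d(z)|=4|2c-1|\,|w|\,\bigl|\sin(\phi/2)\sin(\phi/2-\theta)\bigr|.$$

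Now maximize over $\phi$. The identity $2\sin x\sin(x-\theta)=\cos\theta-\cos(2x-\theta)$ gives $|\sin x\sin(x-\theta)|\le (1+|\cos\theta|)/2$. Moreover $|w|\cos\theta=\operatorname{Re}w=-(2+c)$ and $2+c>0$, so
$$|A_dB_d|\le g(c):=2|2c-1|\Bigl(2+c+\sqrt{(2+c)^2+25(1-c^2)}\Bigr),\qquad -1\le c\le 1.$$
The whole lemma then reduces to showing $\max_{[-1,1]}g\le 25\cdot\frac{975}{976}$.

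On $[1/2,1]$ the bound is easy. There $g(c)\le 2\cdot 1\cdot\bigl(3+\sqrt{(2+c)^2+25(1-c^2)}\bigr)$, and the radicand is at most about $19$ for $c\ge1/2$, which gives roughly $14.7$. The critical range is $c\in[-1,1/2]$. Sample values are $g(-1)=12$, $g(-1/2)\approx24.33$ and $g(-0.7)\approx24.5$, and the maximum sits near $c\approx-0.6$ with value about $24.97$. The claimed constant $25\cdot975/976\approx 24.974$ is therefore extremely tight.

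This is the main obstacle: the bound has to be certified rigorously, with essentially no slack. I would first isolate the square root,
$$\sqrt{(2+c)^2+25(1-c^2)}\le \frac{K}{2(1-2c)}-(2+c),\qquad K=\frac{25\cdot975}{976}.$$
After checking that the right-hand side is positive on the relevant range, I would square both sides. This turns the claim into a polynomial inequality in $c$, which I would verify by exhibiting it as a square plus a nonnegative remainder, in the spirit of the identity $5-4c-4(1-c)^3(1+c)=(2c^2-2c-1)^2$ used in Lemma \ref{lem2:thm5.2}. If such a clean certificate does not appear, I would fall back on a Sturm-sequence argument, or on interval subdivision of $[-1,1/2]$ using monotonicity bounds on $g'$. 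The constant $975/976$ suggests the authors found a nearly exact extremal value, so locating the true maximizer of $g$ precisely, by solving $g'(c)=0$, will be the decisive computation.
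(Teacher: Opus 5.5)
Your reduction is correct and matches the paper's step for step. You use the same factorizations $A_d=z(2c-1)(1-u)$ and $B_d=z(w-u\overline{w})$ with $w=-(c+2)+5i\sin t$, and the same product-to-sum bound with $\phi$ treated as free. Your $g$ is the paper's $F$, since $(2+c)^2+25(1-c^2)=29+4c-24c^2$.

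The gap is that you stop where the lemma's real content begins. The inequality $\max_{[-1,1/2]}g\le 25\cdot\frac{975}{976}$ is not proved; you only list methods you might try. The numerics behind that plan are also wrong. In fact $g(-0.6)=4.4\,(1.4+\sqrt{17.96})\approx 24.81$, and the maximum of $g$ on $[-1,\frac12]$ is about $24.815$, near $c\approx-0.613$. That leaves a margin of about $0.16$ below $25\cdot\frac{975}{976}\approx 24.974$. So the constant is an artifact of a crude certificate, not a near-extremal value, and you do not need to locate the maximizer. (On $[\frac12,1]$ the radicand equals $25$ at $c=\frac12$, so your crude bound there is $16$, not $14.7$. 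This is harmless.)

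The missing step is closed by your own isolate-and-square idea, carried out at level $25$ rather than $K$. For $c\le\frac12$ put $t=1-2c$ and $Y=2t\sqrt{29+4c-24c^2}$. Then $g=2t(c+2)+Y$ and $25-2t(c+2)=4c^2+6c+21>0$. Expanding gives the exact identity
\[
\bigl(25-2t(c+2)\bigr)^2-Y^2=25\bigl[(4c^2-2c-3)^2+(4c+2)^2\bigr].
\]
The bracket is at least $\frac1{16}$ on $[-1,\frac12]$: if $|4c+2|<\frac14$, then $-\frac9{16}<c<-\frac7{16}$, and there $4c^2-2c-3\le-\frac{39}{64}$. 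Since $t\le3$ and the radicand is below $36$, the cofactor $25-2t(c+2)+Y$ is at most $61$. Hence
\[
25-g=\frac{\bigl(25-2t(c+2)\bigr)^2-Y^2}{25-2t(c+2)+Y}\ge\frac{25}{976}.
\]
This is exactly the paper's argument; the paper also writes the analogous identity with $(4c^2-2c-5)^2+(4c-2)^2$ for $c\ge\frac12$, where your crude bound already suffices.

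Your version with $K=25-\varepsilon$, $\varepsilon=\frac{25}{976}$, also goes through once this certificate is in hand. After squaring, the quantity to be shown nonnegative is $25[\cdots]-2\varepsilon(4c^2+6c+21)+\varepsilon^2$. This is at least $\frac{25}{16}-50\varepsilon>0$, because $4c^2+6c+21\le25$ on $[-1,\frac12]$.
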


\begin{proof}
Put
$$
U(z) = 1-z+z^2, \quad V(z) = -3-2z+2z^2, \quad V^*(z) = z^2V(z^{-1}) = 2-2z-3z^2
$$
so that $A_d(z) = U(z)(1-z^d)$ and $B_d(z) = V(z) - z^dV^*(z)$.
We let
$$
z=e^{it},\qquad u=z^d,\qquad c=\cos t,\qquad s=\sin t.
$$
Then, on the unit circle, we see that $z^{-1}U(z)=2c-1.$ Also, $z^{-1}V(z)=-(c+2)+5is.$
 
 Now write $w=-(c+2)+5is.$  Since $\displaystyle z^{-1}V^*(z)=\overline w,$ we have
$$
A_d(z)=z(2c-1)(1-u) \mbox{ and }  B_d(z)=z(w-u\overline w).
$$
By writing $u=e^{i\phi}$ and $w=\eta e^{i\theta}$, we see that 
$$
|1-u|=2\left|\sin\frac{\phi}{2}\right| \mbox{ and } 
|w-u\overline w|=2\eta\left|\sin\left(\theta-\frac{\phi}{2}\right)\right|.
$$
Since  
$2\sin a \  \sin(\theta-a)=\cos(2a-\theta)-\cos\theta$ is true, 
we get
$$
|\sin a\sin(\theta-a)|\leq\frac{1+|\cos\theta|}{2}.
$$
Therefore
$$
|A_d(z)B_d(z)|\leq2|2c-1|\eta(1+|\cos\theta|).
$$
Now
$$
\eta^2=|w|^2=(c+2)^2+25s^2=29+4c-24c^2, \mbox{ and } |\cos\theta|=\frac{c+2}{\eta},
$$
because $c+2>0$. Consequently,
$$|A_d(z)B_d(z)|\leq F(c),
$$
where $\displaystyle F(c)=2|2c-1|\left(\sqrt{29+4c-24c^2}+c+2\right).$
\bigskip

Now, we first prove that $F(c)<25$ for every $-1\leq c\leq1$. Put 
$$t=|2c-1|  \mbox{ and }  Y=2t\sqrt{29+4c-24c^2}.
$$
Then we see that  $F(c)=2t(c+2)+Y.$

If $c\geq1/2$, from the direct expansion and since $96(2c\mp1)^2 = 24(4c\mp2)^2$, we get 
\begin{eqnarray*}
\left(25-2(2c-1)(c+2)\right)^2-4(2c-1)^2(29+4c-24c^2) &=&25(4c^2-2c-5)^2+(4c-2)^2+96(2c-1)^2\\
&=&25\left[(4c^2-2c-5)^2 +(4c-2)^2\right],
\end{eqnarray*}
If $c\leq1/2$, from the direct expansion  and since $96(2c\mp1)^2 = 24(4c\mp2)^2$, we get
\begin{eqnarray*}
\left(25-2(1-2c)(c+2)\right)^2-4(1-2c)^2(29+4c-24c^2)&=&25(4c^2-2c-3)^2+(4c+2)^2+96(2c+1)^2\\
&=&25\left[(4c^2-2c-3)^2+(4c+2)^2\right].
\end{eqnarray*}
Note that the expressions on the right are positive. Moreover, $25-2t(c+2)>0$
 and hence  $F(c)<25$.

\bigskip

We now record an explicit uniform bound. On $1/2\leq c\leq1$, $|4c^2-2c-5|\geq3,$
and hence  the corresponding sum of squares is at least $9$.

On $-1\leq c\leq1/2$, we claim that
$$
(4c^2-2c-3)^2+(4c+2)^2\geq\frac1{16}.
$$
If $|4c+2|\geq1/4$, then nothing to prove. If $|4c+2|<1/4$, then
$$-\frac9{16}<c<-\frac7{16}.
$$
On this interval,  we have $4c^2-2c-3\leq-\frac{39}{64},$
and hence its square is greater than $1/16$. Thus in both ranges the relevant sum of squares is at least $1/16$. Thus in both ranges the sum of squares in brackets is at least $1/16$, and hence
$$
(25-F(c))\,\bigl(25-2t(c+2)+Y\bigr)=\bigl(25-2t(c+2)\bigr)^2-Y^2\ge\frac{25}{16}.
$$
Since $t\leq3$  and $29+4c-24c^2<36,$ we have $Y<36.$  Therefore $25-2t(c+2)+Y\leq61.$
Consequently,
$$
25-F(c)\geq\frac{25}{16\cdot61}=\frac{25}{976}.
$$
Hence
$$F(c)\leq25-\frac{25}{976}=25\frac{975}{976}.
$$
This proves
$$\|A_dB_d\|_{\infty}\leq25\frac{975}{976}.$$
\end{proof}

Now we prove the main result of this section.

\begin{theorem}\label{thm8.1}
Let $P(x) = x^d +a_{d-1}x^{d-1}+\cdots+a_1x+1\in \mathbb{Z}[x]$ be a monic irreducible reciprocal polynomial of even degree $d\geq 2$ which has no cyclotomic factors in it. Let $a_j \equiv 2\pmod{5}$ for all even indices $1\leq j\leq d-1$ and $a_i\equiv -1\pmod{5}$ for all odd indices $1\leq i\leq d-1$. Then 
$$
M(P)\geq \left(\frac{976}{975}\right)^{1/3} > 1
$$
and for all $d\geq 4$, we have 
$$
 M(P)\geq \left(\frac{976}{975}\right)^{d/(2d+4)}.
$$
\end{theorem}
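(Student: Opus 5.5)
The plan is to run the resultant argument of Theorems \ref{thm4.1} and \ref{thm5.1}, with the auxiliary pair $A_d, B_d$ of Lemma \ref{lem1:thm8.1}.

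\textbf{Step 1: the comparison polynomial.} Reducing modulo $5$, we have $P\equiv R(x)$, where
$$
R(x)=1-x+2x^2-x^3+2x^4-\cdots+2x^{d-2}-x^{d-1}+x^d .
$$
Multiplying by $1-x^2$ makes the middle coefficients telescope:
$$
(1-x^2)R(x)=1-x+x^2-x^d+x^{d+1}-x^{d+2}=(1-x+x^2)(1-x^d)=A_d(x).
$$
Hence $A_d\equiv(1-x^2)P\pmod 5$. Comparing coefficients directly also gives $B_d\equiv 2A_d\pmod 5$, since $-3\equiv2$ and $3\equiv-2$. Therefore $A_d-(1-x^2)P=5S$ and $B_d-2(1-x^2)P=5T$ with $S,T\in\mathbb{Z}[x]$. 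As in Lemma \ref{lem:reslower}, this gives $5^d\mid\operatorname{Res}(P,A_d)$ and $5^d\mid\operatorname{Res}(P,B_d)$.

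\textbf{Step 2: both resultants are nonzero.} $A_d$ is a product of cyclotomic polynomials and $P$ has no cyclotomic factor, so $\operatorname{Res}(P,A_d)\neq0$.

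For $B_d$, suppose $P\mid B_d$, and note that $B_d$ is anti-reciprocal, i.e. $z^{d+2}B_d(1/z)=-B_d(z)$. The quotient is a quadratic with integer coefficients, leading coefficient $3$ and constant term $-3$. Since $P$ is reciprocal, the quotient must also be anti-reciprocal, which forces it to be $3(x^2-1)$. The coefficient of $x$ in $3(x^2-1)P$ is $-3a_1$, whereas in $B_d$ it is $-2$. This is impossible for an integer $a_1$, so $\operatorname{Res}(P,B_d)\neq0$.

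\textbf{Step 3: the main estimate.} By Steps 1 and 2, $5^{2d}\le|\operatorname{Res}(P,A_dB_d)|$. Now $A_dB_d$ has degree $2d+4$. Lemma \ref{lem:resultant-mahler} together with Lemma \ref{lem1:thm8.1} gives
$$
5^{2d}\le \Bigl(25\cdot\tfrac{975}{976}\Bigr)^d M(P)^{2d+4},
$$
that is, $M(P)\ge(976/975)^{d/(2d+4)}$. For $d\ge4$ the exponent is at least $1/3$, which gives the uniform bound.

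\textbf{Step 4: the case $d=2$.} Here the exponent is only $1/4$, so this case is done by hand. We have $P=x^2+ax+1$ with $a\equiv-1\pmod5$. The value $a=-1$ gives $\Phi_6$, which is excluded. Every other admissible value has $|a|\ge4$, so $M(P)\ge 2+\sqrt3$, which is far larger than $(976/975)^{1/3}$.

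The only real subtlety is the nonvanishing of $\operatorname{Res}(P,B_d)$, which rests on the reciprocity argument in Step 2. The norm bound itself is already supplied by Lemma \ref{lem1:thm8.1}.
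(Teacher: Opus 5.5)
Your proposal is correct and follows the paper's proof essentially step for step. It uses the same comparison polynomial $U(x)(1+x^2+\cdots+x^{d-2})$, the same pair $A_d,B_d$ with $5^d$ dividing both resultants, and the same combination of Lemma~\ref{lem:resultant-mahler} with Lemma~\ref{lem1:thm8.1}, and it treats $d=2$ by hand in the same way. The only difference is cosmetic and comes in showing $\operatorname{Res}(P,B_d)\neq0$. You use the anti-reciprocity of $B_d$ to force the quotient to be $3(x^2-1)$ and then compare $x$-coefficients. The paper instead compares the coefficients of $x$ and $x^{d+1}$, using $a_{d-1}=a_1$, to reach $6a_1=4$. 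These are the same argument packaged differently, and yours also happens to cover $d=2$.
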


\begin{proof} Define $U(x)=1-x+x^2$ and the residue polynomial is
$$R_{\rm alt}(x)=1-x+2x^2-x^3+2x^4-\cdots-x^{d-1}+x^d.
$$
It factors as
$$R_{\rm alt}(x)=U(x)(1+x^2+x^4+\cdots+x^{d-2}).
$$
Indeed, multiplication by $U$ gives the coefficient $-1$ at every odd position, the coefficient $2$ at every even interior position, and coefficient $1$ at the two endpoints.  Consequently,
$$(1-x^2)R_{\rm alt}(x)=U(x)(1-x^d).$$
By defining $A_d(x)=U(x)(1-x^d)$, we get $A_d(x)\equiv(1-x^2)P(x)\pmod5.$
We require a second lift. Put 
$$V(x)=-3-2x+2x^2
\mbox{ and } V^*(x)=x^2V(x^{-1})=2-2x-3x^2.
$$
By reducing modulo $5$, we get 
$$V(x)\equiv2U(x) 
\mbox{ and } V^*(x)\equiv2U(x).
$$
By defining $B_d(x)=V(x)-x^dV^*(x)$, we see that 
$$
B_d(x)\equiv2U(x)(1-x^d)\equiv2(1-x^2)P(x)\pmod5.
$$
Thus
$$5^d\mid\mathop{ \rm Res}(P,A_d) \mbox{ and }  5^d\mid\mathop{ \rm Res}(P,B_d).
$$
We prove that both the resultants are non-zero. 

When $d\geq 4$, we first claim that $\mathop{ \rm Res}(P,B_d)\ne 0$.  Suppose that $P$ divides $B_d$. Since $\deg B_d=d+2$, write $B_d(x)=Q(x)P(x),$ where $Q(x)=3x^2+bx-3.$

The constant coefficient of $Q$ is $-3$ because the constant coefficient of $B_d$ is $-3$, and the leading coefficient is $3$ because the leading coefficient of $B_d$ is $3$. The coefficient of $x$ in $B_d$ is $-2$, and hence 
$-3a_1+b=-2.$  The coefficient of $x^{d+1}$ in $B_d$ is $2$. By reciprocity, $a_{d-1}=a_1,$ and by comparing
 the coefficient of $x^{d+1}$, we get $3a_1+b=2.$

By subtracting the two equations, we get $6a_1=4,$ which is impossible.

For $d\geq 4$, the polynomial $P$ is non-cyclotomic, and since $A_d(x) = U(x)(1-x^d)$ is the product of cyclotomic polynomials, we conclude that $\mathop{ \rm Res}(P,A_d)\neq0$. Thus both resultants are nonzero.

Suppose first that $d\geq4$. Both auxiliary resultants are nonzero and divisible by $5^d$. Hence
$$
5^{2d}\leq|\mathop{ \rm Res}(P,A_dB_d)|.
$$
Since $\deg(A_dB_d)=2d+4$ and by Lemma \ref{lem1:thm8.1}, 
$$\|A_dB_d\|_{\infty}\leq25\frac{975}{976},
$$
we obtain
$$25^d\leq\left(25\frac{975}{976}\right)^dM(P)^{2d+4}.$$
Thus
$$
M(P)^{2d+4}\geq\left(\frac{976}{975}\right)^d,
$$
and therefore
$$M(P)\geq\left(\frac{976}{975}\right)^{d/(2d+4)}.
$$
Also, for $d\geq4$, we see that $\displaystyle \frac{d}{2d+4}\geq\frac13,$ and hence we get
$$
M(P)\geq\left(\frac{976}{975}\right)^{1/3}.
$$

When $d=2$, the condition concerns only the coefficient of $x$, which is congruent to $-1$ modulo $5$. Thus
$P(x)=x^2+ax+1$ where $a\equiv-1\pmod5.$ 

When the value $a=-1$, we note that $P(x) = \Phi_6(x)$, sixth cyclotomic polynomial, which is not the case as $P$ is not cyclotomic.  Every other admissible value satisfies $|a|\geq4$, and hence
$$
M(P)\geq 2+\sqrt3>\left(\frac{976}{975}\right)^{1/3}.
$$
This proves the theorem.
\end{proof}

\section{Proof of Theorem \ref{thm:abmodm}}

First, when $d=1$, every monic, reciprocal, irreducible, non-cyclotomic quadratic has the form $P(x)=x^2+ux+1$ with $|u|\geq3$, and therefore $M(P)\geq\frac{3+\sqrt5}{2}>1$. Now, let $m\geq 3$ be an integer and $0\leq a,b\leq m-1$ be distinct integers. For each integer $d\geq2$, let $\mathscr{P}_{m,a,b}(d)$ be the collection of all monic, reciprocal, and irreducible non-cyclotomic polynomials $P(x) = x^{2d} + c_{2d-1}x^{2d-1}+\cdots+c_2x^2+c_1x+1\in \mathbb{Z}[x]$ such that $c_j = c_{2d-j}$, $c_{2j+1} \equiv a\pmod{m}$ for $0\leq j\leq d-1$ and $c_{2j}\equiv b\pmod{m}$ for $1\leq j\leq d-1$. The family $\mathscr{P}_{m,a,b}=\bigcup_{d\geq1}\mathscr{P}_{m,a,b}(d)$ also includes the degree-two case treated above.

\bigskip

Choose integers $A,E$ satisfying $A\equiv a\pmod m$ and $E\equiv b-1\pmod m.$ Define 
$$
G_{A,E}(x)=x^2+Ax+E \mbox{ and } G_{A,E}^*(x)=x^2G_{A,E}(1/x)=1+Ax+Ex^2.
$$
For a positive integer $d$, define 
$$
Q_{d,A,E}(x)=x^{2d}+1+A\sum_{j=0}^{d-1}x^{2j+1}+(E+1)\sum_{j=1}^{d-1}x^{2j}.
$$ 
The odd interior coefficients of $Q_{d,A,E}$ are equal to $A$, while the even interior coefficients are equal to $E+1$. Since $A\equiv a\pmod m$ and $E+1\equiv b\pmod m,$ every coefficient of $P(x)-Q_{d,A,E}(x)$ is divisible by $m$. Therefore there exists a polynomial 
$$
H_{A,E}(x)\in\mathbb{Z}[x] \mbox{ such that }  P(x)=Q_{d,A,E}(x)+mH_{A,E}(x).
$$ 
Since $P$ and $Q_{d,A,E}$ have the same leading and the constant coefficients,  we have $\deg H_{A,E}<2d.$

\bigskip 
We now compute a sparse multiple of $Q_{d,A,E}$ as follows. The geometric-series identities imply
$$
(x^2-1)\sum_{j=0}^{d-1}x^{2j+1}=x^{2d+1}-x  \mbox{ and } (x^2-1)\sum_{j=1}^{d-1}x^{2j}=x^{2d}-x^2.
$$ 
Consequently, 
$$
(x^2-1)Q_{d,A,E}(x)=x^{2d+2}+Ax^{2d+1}+Ex^{2d} -Ex^2-Ax-1
$$ 
and therefore 
\begin{equation}\label{eq1000}
(x^2-1)Q_{d,A,E}(x)=x^{2d}G_{A,E}(x)-G_{A,E}^*(x).
\end{equation}
Now we need the following lemma.

\begin{lemma}\label{thmab:lem1}
For each $P\in \mathscr{P}_{m,a,b}$ of degree $2d\geq 4$, there exists a pair $(A, E)\in \mathbb{Z}^2$ such that $A\equiv a\pmod{m}, E\equiv b-1\pmod{m}$, the Mahler measure $M(x^2 +Ax+E) <m$ and if $P(x) = Q_{d, A, E}(x) + mH_{A,E}(x)$, then $H_{A,E}\ne 0$.
\end{lemma}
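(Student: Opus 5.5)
We need to choose $A\equiv a$, $E\equiv b-1 \pmod m$ so that $M(x^2+Ax+E)<m$ and $P\neq Q_{d,A,E}$, i.e.\ $H_{A,E}\neq 0$. The plan is to first exhibit several admissible pairs with small Mahler measure. Then, since $P$ is a single fixed polynomial, it can equal $Q_{d,A,E}$ for at most one pair: the odd coefficient of $P$ determines $A$, and the even interior coefficient determines $E+1$. So it is enough to produce two distinct pairs that both satisfy the Mahler measure bound.

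\emph{Step 1: the bound for centered representatives.} Take $A_0$ with $A_0\equiv a \pmod m$ and $|A_0|\le m/2$, and $E_0$ with $E_0\equiv b-1\pmod m$ and $|E_0|\le m/2$. For $G=x^2+Ax+E$ with roots $\gamma_1,\gamma_2$, we have $M(G)=\max\{1,|\gamma_1|\}\max\{1,|\gamma_2|\}$. Two elementary estimates give the bound:
\begin{itemize}
\item if both roots lie outside the unit disc, then $M(G)=|E|$;
\item if exactly one root lies outside, then $M(G)=|\gamma_1|\le \max\{1,|A|+|E|\}$, by the Cauchy-type bound.
\end{itemize}
Rather than the crude Cauchy bound, I would use Landau's inequality $M(G)\le \|G\|_2=\sqrt{1+A^2+E^2}$. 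With $|A|,|E|\le m/2$ this gives $M(G)\le\sqrt{1+m^2/2}$, which is $<m$ as soon as $m^2>2$. This holds since $m\ge3$.

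\emph{Step 2: a second admissible pair.} I would aim to keep an $\ell^2$ norm below $m$ while shifting one residue representative. Try $(A_0,E_0\pm m)$, choosing the sign so that $|E_0\pm m|\le m$; more precisely, take $E_1=E_0-m\,\mathrm{sgn}(E_0)$ when $E_0\ne0$, so that $|E_1|=m-|E_0|$. Landau then gives only $\sqrt{1+A_0^2+E_1^2}$, which may exceed $m$, so here I would instead compute $M$ directly.

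Since $E_1\ne0$, the product of the roots is $E_1$. If $|A_0|$ is small relative to $|E_1|$, both roots lie outside the unit disc (Rouché on $|z|=1$ when $|E_1|>1+|A_0|$). In that case $M=|E_1|\le m-|E_0|<m$ whenever $E_0\ne0$. If $E_0=0$, use $E_1=\pm m$ instead: Landau fails, but when $|A_0|\le m/2<m-1$ both roots lie outside, giving $M=m$. That is not $<m$, so this case would instead shift $A$: take $(A_0\pm m, 0)$, whose roots are $0$ and $-(A_0\pm m)$, giving $M=|A_0\pm m|$; choosing the sign appropriately makes this $m-|A_0|<m$ provided $A_0\ne0$. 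The remaining corner $A_0=E_0=0$, i.e.\ $a\equiv0$ and $b\equiv1$, must be handled separately, for instance with the pairs $(0,0)$ and $(m,m)$ or $(m,-m)$.

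\emph{Step 3: the main obstacle.} The hardest part is making Step 2 uniform. The Rouché condition $|E_1|>1+|A_0|$ needs $m-|E_0|>1+|A_0|$, which can fail when $|A_0|+|E_0|$ is close to $m$. I would first try to pick whichever of the four shifts $(A_0\pm m,E_0)$, $(A_0,E_0\pm m)$ has the smallest coefficients and bound its $M$ by an exact case analysis of the quadratic's roots, real versus complex. In the complex case $M=|E|$. In the real case, with one root outside, $M=\bigl(|A|+\sqrt{A^2-4E}\bigr)/2$, which should be checked to be below $m$ in each configuration. For small $m$, e.g.\ $m=3,4$, I would enumerate directly.

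Finally, conclude: among two distinct admissible pairs, at most one can satisfy $Q_{d,A,E}=P$. For the other pair, $H_{A,E}=(P-Q_{d,A,E})/m\neq0$, which proves the lemma.
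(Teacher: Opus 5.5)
There is a genuine gap: your strategy of always producing two distinct admissible pairs cannot handle the residue classes $a\equiv 0$, $b\equiv 1 \pmod m$. This case is allowed, since $a\neq b$. Here $(A_0,E_0)=(0,0)$, and every other pair with $A\equiv 0$, $E\equiv 0 \pmod m$ has $M\ge m$:
\begin{itemize}
\item if $E\neq 0$, then $|E|\ge m$ and $M(x^2+Ax+E)\ge |E|$;
\item if $E=0$ and $A\neq 0$, then $M=\max\{1,|A|\}\ge m$.
\end{itemize}
So no second pair exists. Your proposed companions fail concretely. Both roots of $x^2\pm mx+m$ lie outside the unit circle (complex of modulus $\sqrt3$ for $m=3$, the double root $\mp 2$ for $m=4$, two real roots for $m\ge5$), so $M=m$, not $<m$. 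And $x^2\pm mx-m$ has a root of modulus $\tfrac12\bigl(m+\sqrt{m^2+4m}\bigr)>m$.

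The missing idea is that this case needs no second pair. The comparison polynomial $Q_{d,0,0}=1+x^2+\cdots+x^{2d}=(x^{2d+2}-1)/(x^2-1)$ is a product of cyclotomic polynomials. An irreducible non-cyclotomic $P$ therefore cannot equal it, so $H_{0,0}\neq 0$ directly. This is exactly how the paper treats $(A_0,E_0)=(0,0)$.

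Away from that case, your Step 3 is a plan rather than a proof: you never show that the shifted pair has $M<m$ once the Rouch\'e condition fails. The paper closes this without any root analysis by shifting the coordinate of larger absolute value. Put $u=\max\{|A_0|,|E_0|\}\ge 1$ and $v=\min\{|A_0|,|E_0|\}$. The new pair has $1+A_1^2+E_1^2=1+(m-u)^2+v^2$. Using $m\ge 2u$ and $v\le u$, we get $m^2-\bigl(1+(m-u)^2+v^2\bigr)=2mu-u^2-v^2-1\ge 2u^2-1>0$, so Landau's inequality gives $M<m$.

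Your own choice of shifting $E$ when $E_0\neq 0$ can also be completed:
\begin{itemize}
\item if both roots lie outside the unit circle, then $M=|E_1|\le m-1$;
\item otherwise $M$ is at most $\max\{1,\text{larger root modulus}\}$, and the larger root modulus is at most $\tfrac12\bigl(m/2+\sqrt{m^2/4+4(m-1)}\bigr)<m$.
\end{itemize}
With either completion plus the cyclotomic argument for the corner case, your correct final observation finishes the proof: for $d\ge2$, $P$ coincides with $Q_{d,A,E}$ for at most one pair.
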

\begin{proof}
We shall construct a finite set $\mathcal{S}$ (note that $|\mathcal{S}| \leq 2$) of   integer pairs $(A,E)$ such that  $A\equiv a\pmod m$,   $E\equiv b-1\pmod m$ along with the Mahler measure  $M(x^2+Ax+E)<m$  for every $(A,E)\in\mathcal{S}$.

We first observe the following.  For every monic quadratic polynomial $G(x)=x^2+Ax+E,$  by Jensen's inequality and Parseval's identity,  we get 
$$ 
\log M(G)=\frac{1}{2\pi}\int_0^{2\pi}\log|G(e^{it})|\,dt \leq\frac12\log\left(\frac{1}{2\pi}\int_0^{2\pi}|G(e^{it})|^2\,dt\right)  =\frac12\log(1+A^2+E^2).
$$ 
Therefore $M(G)\leq\sqrt{1+A^2+E^2}$ for every monic quadratic $G(x) = x^2 +Ax+E$. 

\bigskip

Assume first that $m\geq3$. Choose centered representatives $A_0,E_0\in\mathbb{Z}$ such that 
$$A_0\equiv a\pmod m, \quad E_0\equiv b-1\pmod m, \quad |A_0|\leq\frac m2 \mbox{ and } |E_0|\leq\frac m2.
$$ 
Therefore 
$$1+A_0^2+E_0^2\leq1+\frac{m^2}{2}<m^2, \mbox{ and hence } M(x^2+A_0x+E_0)<m.
$$
If $(A_0,E_0)=(0,0)$, we take $\mathcal{S}=\{(0,0)\}.$ If $(A_0,E_0)\neq(0,0)$, put 
$$u=\max\{|A_0|,|E_0|\} \mbox{ and } v=\min\{|A_0|,|E_0|\}.
$$ 
Then $$1\leq u\leq\frac m2 \mbox{ and } 0\leq v\leq u.
$$
If $|A_0|\geq|E_0|$, define 
$$
A_1=A_0-m\operatorname{sgn}(A_0) \mbox{ and } E_1=E_0.
$$ 
If $|E_0|>|A_0|$, define $A_1=A_0$ and $E_1=E_0-m\operatorname{sgn}(E_0).$ In either case, 
$$
A_1\equiv a\pmod m \mbox{ and } E_1\equiv b-1\pmod m.
$$ 
The shifted coordinate has absolute value $m-u$, while the other coordinate has absolute value $v$. Thus we get
 $$1+A_1^2+E_1^2=1+(m-u)^2+v^2.
 $$ 
 Moreover, 
 $$m^2-\left(1+(m-u)^2+v^2\right)=2mu-u^2-v^2-1\geq4u^2-u^2-v^2-1\geq2u^2-1 >0.
 $$ 
 Here we used $m\geq2u$ and $v\leq u$. Hence the Mahler measure $M(x^2+A_1x+E_1)<m.$ 
 In this case define $\mathcal{S}=\{(A_0,E_0),(A_1,E_1)\}.$

\bigskip

For completeness, we also record the case $m=2$, although Theorem~\ref{thm:abmodm} assumes $m\geq3$. Since $a\not\equiv b\pmod2$, there are two cases. 

If $a\equiv0\pmod2$ and $b\equiv1\pmod2,$ define $\mathcal{S}=\{(0,0)\}.$ 

If $a\equiv1\pmod2$ and $b\equiv0\pmod2,$ define $\mathcal{S}=\{(-1,-1),(1,-1)\}.$ 

For each pair in the second set, we have $1+A^2+E^2=3<4$ and the corresponding quadratic has Mahler measure less than $2$.

We have therefore constructed a fixed finite set $\mathcal{S}$, depending only on $m,a,b$, such that the Mahler measure $M(x^2+Ax+E)<m$ for every $(A,E)\in\mathcal{S}$. The set contains either two distinct pairs or the single pair $(0,0)$.

\bigskip

Let $P\in\mathscr{P}_{m,a,b}$ have degree $2d\geq4$. For each pair $(A,E)\in\mathcal{S}$, write 
$$
P=Q_{d,A,E}+mH_{A,E}.
$$
If $\mathcal{S}$ contains two distinct pairs, at least one of the two polynomials $H_{A,E}$ is nonzero. Indeed, if both were zero, then two distinct comparison polynomials would be equal. Subtracting them would give 
$$
\left(A_1-A_2\right)\sum_{j=0}^{d-1}x^{2j+1}+\left(E_1-E_2\right)\sum_{j=1}^{d-1}x^{2j}=0.
$$ 
The first sum contains only odd powers and the second contains only even powers. Since $d\geq2$, both sums are nonempty, and hence we get $A_1=A_2 \mbox{ and }E_1=E_2,$ a contradiction.

\smallskip

If $\mathcal{S}=\{(0,0)\}$, then 
$$
Q_{d,0,0}(x)=1+x^2+x^4+\cdots+x^{2d}=\frac{x^{2d+2}-1}{x^2-1}.
$$ 
Hence $Q_{d,0,0}$ is a product of cyclotomic polynomials. Since $P$ is irreducible and non-cyclotomic, 
$P\neq Q_{d,0,0},$ and therefore $H_{0,0}\neq0.$ 

\bigskip

In conclusion,  for every $P\in \mathscr{P}_{m,a,b}$ of degree at least $4$, one can choose a pair $(A,E)\in\mathcal{S}$ such that $H_{A,E}\neq0$ and the Mahler measure $M(x^2+Ax+E)<m.$
\end{proof}

\bigskip

\noindent{\bf Proof of Theorem \ref{thm:abmodm}}. Suppose  that 
$$\inf_{P\in \mathscr{P}_{m,a,b}}M(P)=1.
$$ 
Then by Theorem \ref{thm1} (1) applied to $S = \mathscr{P}_{m,a,b}$, there exists a sequence of pairwise distinct polynomials $P_k\in\mathscr{P}_{m,a,b}$ such that 
$$M(P_k)\longrightarrow1 \mbox{ as } k\to\infty.
$$ 
Write $d_k=\deg P_k.$ By Theorem \ref{thm1} (1), we see that $d_k\to\infty$ as $k\to\infty$.  In particular, $d_k\geq4$ for every sufficiently large $k$. For each such $P_k$, by Lemma \ref{thmab:lem1}, choose a pair $(A,E)\in\mathcal{S}$ for which the corresponding remainder is nonzero. Since $\mathcal{S}$ is finite, after passing to another subsequence we may use one fixed pair $(A,E)$ for all $k$. Put 
$$
G(x)=x^2+Ax+E \mbox{ and }G^*(x)=1+Ax+Ex^2.
$$ 
Then by Lemma \ref{thmab:lem1}, $M(G)<m.$

\bigskip

Choose one root $\alpha_k$ of $P_k$ and define $\boldsymbol{\xi}_k=(\alpha_k,\alpha_k^{d_k}).$  Let $h$ denote the logarithmic absolute Weil height. Since $P_k$ is monic and irreducible of degree $d_k$, 
$$
h(\alpha_k)=\frac{\log M(P_k)}{d_k}.
$$ 
Also, 
$$h(\alpha_k^{d_k})=d_kh(\alpha_k)=\log M(P_k).
$$ 
For the pair $(\alpha_k, \alpha_k^{d_k})$, the projective height satisfies  
$$
h(1:\alpha_k:\alpha_k^{d_k})\leq h(\alpha_k)+h(\alpha_k^{d_k}).
$$ 
Therefore, as $k\to\infty$, 
$$h(\boldsymbol{\xi}_k)\leq\left(1+\frac{1}{d_k}\right)\log M(P_k)\longrightarrow0.$$

We next verify strictness of the sequence $(\boldsymbol{\xi}_k)_k$. Suppose that this sequence is not strict. We know that every proper algebraic subgroup of $\mathbb{G}_m^2$ is contained in the kernel of a nontrivial character $$(z,w)\longmapsto z^rw^s$$ for some $(r,s)\in\mathbb{Z}^2\setminus\{(0,0)\}$. 

If  $\boldsymbol{\xi}_k$  belonged to one fixed proper algebraic subgroup for infinitely many $k$, then for fixed integers $r,s$, not all zero, such that $$\alpha_k^{r+sd_k}=1$$ for infinitely many $k$. Since $P_k$ is irreducible and non-cyclotomic, $\alpha_k$ is not a root of unity. Hence $$r+sd_k=0.$$ For fixed $r,s$, this can hold for at most one value of $d_k$, whereas $d_k\to\infty$, a contradiction. Therefore the sequence $(\boldsymbol{\xi}_k)_k$ is strict.

 \bigskip

Define the two-variable polynomial 
$$
F(z,w)=wG(z)-G^*(z).
$$ 
For each $k$, write $P_k=Q_{d_k/2,A,E}+mH_k$, with $H_k\neq0$, since $\deg P_k=d_k$. 

At a root $\alpha$ of $P_k$, the sparse identity \eqref{eq1000} implies  
\begin{equation}\label{eq1001}
F(\alpha,\alpha^{d_k})=(\alpha^2-1)Q_{d_k/2,A,E}(\alpha)=-m(\alpha^2-1)H_k(\alpha).
\end{equation}
Note that the polynomial $P_k$ has no common root with $(x^2-1)H_k$. Indeed, $P_k$ is irreducible, $H_k\neq0$, and $\deg H_k<\deg P_k,$  and hence $P_k$ does not divide $H_k$. Also, $P_k$ has neither $1$ nor $-1$ as a root, because it is irreducible and non-cyclotomic. Hence we get 
$$
\left|\operatorname{Res}\bigl(P_k,(x^2-1)H_k\bigr)\right|\geq1.
$$ 
By multiplying over all $d_k$ conjugates of $\alpha_k$ and by \eqref{eq1001}, we get 
\begin{eqnarray}\label{eq1003}
\frac{1}{d_k}\sum_{\sigma}\log\left|F\left(\sigma(\alpha_k),\sigma(\alpha_k)^{d_k}\right)\right| &=& \frac{1}{d_k}\sum_\sigma\log|\sigma(\alpha_k)^2-1| + \log m + \frac{1}{d_k}\sum_{\sigma}\log |H_k(\sigma(\alpha_k))| \nonumber\\
&= &\log m +\frac{1}{d_k}\log\left|\operatorname{Res}\bigl(P_k,(x^2-1)H_k\bigr)\right| \nonumber\\
&\geq&\log m,
\end{eqnarray}
 where the sum runs over all embeddings  $\sigma: \mathbb{Q}(\alpha_k)\to \mathbb{C}$.

By setting $L=2(1+|A|+|E|),$  we see that for all $z,w\in\mathbb{C}^\times$, the triangle inequality implies 
\begin{equation}\label{eq1002}
|F(z,w)|\leq L\max\{1,|z|\}^2\max\{1,|w|\}.
\end{equation}
Define $$\Psi(z,w)=\log|F(z,w)|-\log L-2\log^+|z|-\log^+|w|,$$ where $$\log^+x=\max\{0,\log x\}.$$ At a zero of $F$, we interpret $\Psi$ as $-\infty$. By \eqref{eq1002}, we see that $\Psi\leq0.$

\bigskip

Let $\mu_k$ be the uniform probability measure on the Galois orbit $S_k$ of $\boldsymbol{\xi}_k$. Since 
$$
\frac{1}{d_k}\sum_{\sigma}\log^+|\sigma(\alpha_k)|=\frac{\log M(P_k)}{d_k}\mbox{  and }\frac{1}{d_k}\sum_{\sigma}\log^+|\sigma(\alpha_k)^{d_k}|=\log M(P_k),
$$ 
the resultant inequality in \eqref{eq1003} implies that  
\begin{eqnarray*}
\int_{S_k}\Psi\,d\mu_k &=&\frac{1}{d_k}\sum_{\xi\in S_k}\Psi(\xi) = \frac{1}{d_k}\sum_{(\alpha, \alpha^{d_k})\in S_k}\log |F(\alpha, \alpha^{d_k})| - \log L\\
&&\qquad -\frac{2}{d_k}\sum_{(\alpha, \alpha^{d_k})\in S_k}\log^+|\alpha| - \frac{1}{d_k}\sum_{(\alpha, \alpha^{d_k})\in S_k}\log^+|\alpha|^{d_k}\\ 
 &\geq&\log m-\log L-\left(1+\frac{2}{d_k}\right)\log M(P_k).
\end{eqnarray*}
Therefore 
\begin{equation}\label{eq9999}
\liminf_{k\to\infty}\int_{S_k}\Psi\,d\mu_k\geq\log m-\log L.
\end{equation}
For $T>0$, we define 
$$\Psi_T(z,w)=\max\{\Psi(z,w),-T\}.
$$ 
Then $\Psi_T$ is bounded and continuous on $(\mathbb{C}^\times)^2$.
Since $\Psi\leq\Psi_T,$ we have 
$$
\int_{S_k}\Psi\,d\mu_k\leq\int_{S_k}\Psi_T\,d\mu_k.
$$ 
Since $(\xi_k)$ is strict and $h(\xi_k)\to0$, Theorem~\ref{bilu} (with $N=2$) applies to it. Writing $H_k=\log M(P_k)$ for this paragraph, reciprocity gives
$$
\int\bigl(|\log|z||+|\log|w||\bigr)\,d\mu_k(z,w)=\frac{2H_k}{d_k}+2H_k\longrightarrow0,
$$
so the measures $\mu_k$ concentrate in any fixed compact annular neighbourhood of $(\mathbb{S}^1)^2$. Multiplying $\Psi_T$ by a compactly supported continuous cutoff equal to $1$ on such a neighbourhood and applying Theorem~\ref{bilu} yields the convergence for $\Psi_T$. 
Hence 
$$
\lim_{k\to\infty}\int_{S_k}\Psi_T\,d\mu_k=\int_{(\mathbb{S}^1)^2}\Psi_T\,d\lambda,
$$ 
where $\lambda$ is normalized Haar measure on $(\mathbb{S}^1)^2$. Hence, by \eqref{eq9999}, we get  
$$
\log m-\log L\leq\int_{(\mathbb{S}^1)^2}\Psi_T\,d\lambda
$$ 
for every $T>0$.

As $T\to\infty$, the functions $\Psi_T$ decrease pointwise to $\Psi$. Note that the logarithm of the absolute value of a nonzero Laurent polynomial is integrable on the unit torus. Since $0\leq-\Psi_T\uparrow-\Psi$, the monotone convergence theorem gives 
$$
\lim_{T\to\infty}\int_{(\mathbb{S}^1)^2}\Psi_T\,d\lambda = \int_{(\mathbb{S}^1)^2}\Psi\,d\lambda.
$$ 
and hence we get 
\begin{equation}\label{10000}
\log m-\log L\leq\int_{(\mathbb{S}^1)^2}\Psi\,d\lambda.
\end{equation}
 Since the two $\log^+$ terms vanish on the unit torus, we arrive at  
\begin{equation}\label{10001}
\int_{(\mathbb{S}^1)^2}\Psi\,d\lambda=\mathscr{M}(F)-\log L,
\end{equation}
where $\mathscr{M}(F)$ denotes the logarithmic two-variable Mahler measure of $F$.

\bigskip

Fix $z$ with $|z|=1$. Then, 
$$
F(z,w)=G(z)w-G^*(z)
$$ 
is a polynomial in $w$. For almost every such $z$, by Jensen's formula, we get, 
$$
\frac{1}{2\pi}\int_0^{2\pi}\log|F(z,e^{i\phi})|\,d\phi=\log\max\{|G(z)|,|G^*(z)|\}.
$$ 
Since $G$ has real coefficients and $|z|=1$, we get 
$$G^*(z)=z^2\overline{G(z)}, \mbox{ and hence } |G^*(z)|=|G(z)|.
$$ 
Consequently, $$\frac{1}{2\pi}\int_0^{2\pi}\log|F(z,e^{i\phi})|\,d\phi=\log|G(z)|$$ for almost every $z$ on the unit circle. Integrating with respect to $z$ on the unit circle, we get  
\begin{equation}\label{99991}
\mathscr{M}(F)= \frac{1}{(2\pi)^2}\int_0^{2\pi}\int_0^{2\pi}\log|F(e^{it},e^{i\phi})|\,d\phi\,dt = \frac{1}{2\pi}\int_0^{2\pi}\log|G(e^{it})|\,dt=\log M(G).
\end{equation}
Therefore, by \eqref{10000}, \eqref{10001} and \eqref{99991}, we get  
$$
\log m - \log L \leq\log M(G)-\log L  \iff  m\leq M(G).
$$ 
This contradicts  the construction of $G$ with $M(G)<m.$ Therefore, 
$$\inf_{P\in \mathscr{P}_{m,a,b}} M(P)>1
$$ 
which proves the theorem. $\hfill\Box$

\section{Proof of Theorem \ref{thm:general-irreducibility}}\label{sec:8}

First we prove the irreducibility of $P_{d,A}$.  By Proposition \ref{prop:root-distribution}, we see that $d-2$ roots of $P_{d,A}$ are on the unit circle and the remaining two roots are off the unit circle, real, negative and reciprocal. 

Set $c=A-1\ge 3$. We first show that no root of unity is a root of $P_{d,A}$.  

Suppose that $\zeta$ is a root of unity satisfying $P_{d,A}(\zeta)=0$. Since $P_{d,A}(1)=2+A(d-1)>0,$ we conclude that 
$\zeta\neq 1$, and therefore $F_{d,c}(\zeta)=0$ where $F_{d,c}(x) = (x-1) P_{d,A}(x) = x^{d+1} +cx^d-cx-1$. Since $F_{d,c}(\zeta) = 0$, by setting $\eta=\zeta^d$, we see that  
$$\eta(\zeta+c)=c\zeta+1,$$ 
and hence 
\begin{equation}\label{eq:root-unity-relation} 
1-\eta\zeta=c(\eta-\zeta). 
\end{equation} 
Since the number $\eta\zeta=\zeta^{d+1}$ is again a root of unity,  \eqref{eq:root-unity-relation} implies that 
$$
1-\eta\zeta\in c\mathcal{O}_{\mathbb{Q}(\zeta)},
$$
where $\mathcal{O}_{\mathbb{Q}(\zeta)}$ is the ring of integers of $\mathbb{Q}(\zeta)$. Then by Lemma~\ref{lem:root-unity}, we conclude that  $\eta\zeta=1$. By \eqref{eq:root-unity-relation}, we obtain $\eta=\zeta$. Thus 
$$
\zeta^2=\eta\zeta=1 \implies \zeta=\pm 1.
$$
Since $P_{d,A}(1)=2+A(d-1)\neq 0$, we conclude that $\zeta = -1$.  Since $d$ is even, 
$$P_{d,A}(-1) = 2+ A\sum_{j=1}^{d-1}(-1)^j=2-A \ne 0,
$$  
because $A\ge 4$. Thus no root of unity is a root of $P_{d,A}$.   

\smallskip

Now suppose that $P_{d,A}(x)=G(x)H(x)$ is a non-trivial factorization in $\mathbb{Z}[x]$. By Gauss's lemma, we may take $G$ and $H$ to be monic. Since $P_{d,A}(0)=1$, their constant terms are $\pm 1$. Consequently, for each factor, the product of the moduli of its roots is $1$.

By Proposition~\ref{prop:root-distribution}, the only roots of $P_{d,A}$ off the unit circle are $\rho$ and $\rho^{-1}$, where $|\rho|>1$. No factor can contain exactly one of these two roots; otherwise, the product of the absolute values of its roots would be either $|\rho|>1$ or $|\rho|^{-1}<1$, contradicting the fact that its constant term has absolute value $1$.

Thus one non-constant factor contains both $\rho$ and $\rho^{-1}$, while the other non-constant factor has all of its roots on the unit circle. By Kronecker's theorem, the latter factor is a product of cyclotomic polynomials. This means that a root of unity is a root of  $P_{d,A}$,   a contradiction. Hence $P_{d,A}$ is irreducible over $\mathbb{Q}$.

\bigskip

Now we compute the Mahler measure of $P_{d,A}$. 

Let $c=A-1$. By Proposition~\ref{prop:root-distribution}, the roots of $P_{d,A}$ consist of $d-2$ roots on the unit circle, one real root, say,  $\rho=-r$ for some real $r>1,$ and its reciprocal $-r^{-1}$. Since $P_{d,A}$ is monic, $M(P_{d,A}) =r.$ 
Substituting $x=-r$ into $F_{d,c}(x)=x^{d+1}+cx^d-cx-1$ and using that $d$ is even, we obtain 
$$
-r^{d+1}+cr^d+cr-1=0.
$$ 
Thus 
\begin{equation}\label{eq:r-equation} 
r^d(r-c)=cr-1. 
\end{equation} 
From the proof of Proposition~\ref{prop:root-distribution}, the root $\rho$ satisfies $\rho<-c$, and hence we get $r>c=A-1.$

\smallskip

For the upper bound, we evaluate $F_{d,c}(x)$ at $x = -(c+1)$ to get
 $$
 F_{d,c}(-(c+1))=-(c+1)^d+c(c+1)-1.
 $$ 
 Since $d\ge 2$, $ (c+1)^d\ge (c+1)^2$, and therefore, 
 $$
 F_{d,c}(-(c+1))\le -(c+1)^2+c(c+1)-1=-c-2<0.
 $$ 
 On the other hand, $F_{d,c}(-c)=c^2-1>0$. Hence we get $-(c+1)<\rho<-c,$ which implies $c<r<c+1.$ Since $c=A-1$, we precisely have 
 $$A-1<M(P_{d,A})<A.
 $$
Since $c < r < c+1$ and by \eqref{eq:r-equation}, we get 
$$0< r-c=\frac{cr-1}{r^d} \le \frac{c(c+1)-1}{c^d}.
$$ 
Because $c\ge 3$, the right-hand side tends to zero as $d\to\infty$. Therefore $r\rightarrow c=A-1,$ and hence 
$$ 
M(P_{d,A})\rightarrow A-1.
$$ The assertion about the infimum follows from the strict lower bound and the convergence.
$\hfill\Box$

\bigskip

\section{Proof of Theorem \ref{thm:support-abmodm}}

We shall start with some lemmas.

Let $d\geq 2, A$ and $B$ be integers and $P_{d, A, B}(x)$ as defined in Theorem \ref{thm:support-abmodm}. 
Define $q(x)=x^2+Ax+B-1\in \mathbb{Z}[x]$ and its degree-two reciprocal reversal $q^*(x)=x^2q(1/x)=(B-1)x^2+Ax+1.$

\begin{lemma}\label{lem:identity}
For every integer $d\ge 2$, and $A, B\in \mathbb{Z}$, we have
$$
(x^2-1)P_{d,A,B}(x) = x^{2d}q(x)-q^*(x).
$$
\end{lemma}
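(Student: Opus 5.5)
The identity is a direct polynomial computation, and I will prove it the same way as the sparse identity \eqref{eq1000}. In fact it is the special case $A$, $E=B-1$ of \eqref{eq1000}: the polynomial $P_{d,A,B}$ has odd interior coefficients equal to $A$ and even interior coefficients equal to $B=E+1$, so $P_{d,A,B}=Q_{d,A,B-1}$. Also $q=G_{A,B-1}$ and $q^*=G^*_{A,B-1}$. For a self-contained argument I would still spell out the short computation below.

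First I write the polynomial in split form,
$$
P_{d,A,B}(x)=x^{2d}+1+A\sum_{j=0}^{d-1}x^{2j+1}+B\sum_{j=1}^{d-1}x^{2j}.
$$
Next I multiply by $x^2-1$, using the telescoping identities
$$
(x^2-1)\sum_{j=0}^{d-1}x^{2j+1}=x^{2d+1}-x, \qquad (x^2-1)\sum_{j=1}^{d-1}x^{2j}=x^{2d}-x^2,
$$
together with $(x^2-1)(x^{2d}+1)=x^{2d+2}-x^{2d}+x^2-1$. The second sum is nonempty since $d\ge2$.

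Collecting terms gives
$$
(x^2-1)P_{d,A,B}(x)=x^{2d+2}+Ax^{2d+1}+(B-1)x^{2d}-(B-1)x^2-Ax-1 .
$$
The first three terms are $x^{2d}q(x)$. The last three are $-q^*(x)=-\bigl((B-1)x^2+Ax+1\bigr)$, which proves the lemma.

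There is no real obstacle here. The only point needing care is bookkeeping: the $-x^{2d}$ and $+x^2$ coming from $(x^2-1)(x^{2d}+1)$ must combine with the $B$-terms to give the coefficients $B-1$ in both $q$ and $q^*$.
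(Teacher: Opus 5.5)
Your proposal is correct and follows essentially the same route as the paper: split $P_{d,A,B}$ into the odd-power and even-power sums, apply the two telescoping identities $(x^2-1)\sum_{j=0}^{d-1}x^{2j+1}=x^{2d+1}-x$ and $(x^2-1)\sum_{j=1}^{d-1}x^{2j}=x^{2d}-x^2$, then collect terms into $x^{2d}q(x)-q^*(x)$. Your remark that the lemma is the special case $E=B-1$ of the sparse identity \eqref{eq1000}, with $P_{d,A,B}=Q_{d,A,B-1}$, is also accurate.
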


\begin{proof}
Note that  $$(x^2-1)(x+x^3+\cdots+x^{2d-1}) = x^{2d+1}-x \mbox{ and }  (x^2-1)(x^2+x^4+\cdots+x^{2d-2}) = x^{2d}-x^2.
$$
Therefore, we get  
$$
\begin{aligned} 
(x^2-1)P_{d,A,B}(x) &= x^{2d+2}-x^{2d} +A(x^{2d+1}-x) +B(x^{2d}-x^2) +x^2-1\\ 
&= x^{2d+2} +A x^{2d+1} +(B-1)x^{2d} -(B-1)x^2 -Ax -1\\ 
&= x^{2d}(x^2+Ax+B-1) -\bigl((B-1)x^2+Ax+1\bigr)\\ &= x^{2d}q(x)-q^*(x),
\end{aligned}
$$
as desired. 
\end{proof}

\begin{lemma}\label{lem:q-roots}
Assume  integers satisfying $A>B\ge 1$.  Then the polynomial $q(x)=x^2+Ax+B-1$ has two real roots
$\alpha,\beta$ satisfying $-1<\alpha\le 0, \quad -A\le\beta<-A+1.$ In particular, $|\alpha|<1<|\beta|.$
\end{lemma}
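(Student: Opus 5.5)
The plan is to use the intermediate value theorem on $q(x)=x^2+Ax+B-1$, with the real-rootedness coming from the discriminant. Since $A>B\ge 1$, the discriminant is $A^2-4(B-1)\ge A^2-4A+4=(A-2)^2\ge 0$, using $B-1\le A-2$; in fact $A^2-4(B-1)>0$ whenever $B\ge 1$ and $A\geq 1$, but only nonnegativity is needed. So $q$ has two real roots. By Vieta, $\alpha\beta=B-1\ge 0$ and $\alpha+\beta=-A<0$, so both roots are $\le 0$.

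To locate them, I will evaluate $q$ at the four endpoints.
\begin{itemize}
\item At $x=0$: $q(0)=B-1\ge 0$.
\item At $x=-1$: $q(-1)=1-A+B-1=B-A<0$.
\item At $x=-A$: $q(-A)=A^2-A^2+B-1=B-1\ge 0$.
\item At $x=-A+1$: $q(-A+1)=(A-1)^2-A(A-1)+B-1=-(A-1)+B-1=B-A<0$.
\end{itemize}
Since $q$ is an upward parabola, $q(-1)<0$ forces exactly one root on each side of $-1$. Call the one to the right of $-1$ the root $\alpha$ and the one to the left $\beta$.

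On $(-1,0]$, the sign change from $q(-1)<0$ to $q(0)\ge0$ gives $\alpha\in(-1,0]$, with $\alpha=0$ exactly when $B=1$. On $[-A,-A+1)$, the sign change from $q(-A)\ge 0$ to $q(-A+1)<0$ gives $\beta\in[-A,-A+1)$, with $\beta=-A$ exactly when $B=1$.

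One check remains: that this $\beta$ is indeed the left root, i.e. that $-A+1\le -1$. This holds because $A>B\ge1$ gives $A\ge2$.

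Finally, $|\alpha|<1$ is immediate from $\alpha\in(-1,0]$. For $\beta$, we have $|\beta|>A-1\ge 1$. The edge case $A=2$ needs a separate look: then $B=1$, so $\beta=-2$ and $|\beta|=2>1$. In general $|\beta|\ge A-1$, with strict inequality since $\beta<-A+1$. Hence $|\alpha|<1<|\beta|$.

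The only subtle point is this bookkeeping at the boundary cases $B=1$ and $A=2$; the rest is direct evaluation.
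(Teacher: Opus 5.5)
Your proof is correct and is essentially the paper's argument. The paper also uses the sign change $q(-1)=B-A<0\le q(0)$ to locate $\alpha$, but it then gets $\beta=-A-\alpha\in(-A,-A+1)$ directly from $\alpha+\beta=-A$ rather than evaluating at $-A$ and $-A+1$, and it handles $B=1$ separately via $q(x)=x(x+A)$ instead of absorbing it into closed endpoints as you do.

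One harmless slip: $B-1\le A-2$ actually gives a discriminant of at least $(A-2)^2+4$. Your parenthetical claim of positivity for all $A,B\ge1$ is false without $A>B$ (e.g. $A=1$, $B=2$). The discriminant step is unnecessary anyway, since $q(-1)<0$ for a monic quadratic already forces two real roots.
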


\begin{proof}
Suppose first that $B>1$.  Since $q(-1)=B-A<0$ and $q(0)=B-1>0$, the polynomial $q(x)$ has a root $\alpha\in(-1,0).$ Let $\beta$ be the other root.  Since $\alpha+\beta=-A,$ we have $\beta=-A-\alpha.$ Since $-1<\alpha<0$, we get $-A<\beta<-A+1.$
Since $A>B\ge 2$, we have $A\ge 3$, and therefore
$|\beta|>A-1\ge 2.$ 

If $B=1$, then the polynomial $q(x)=x(x+A),$ and hence  we may take $\alpha=0$ and $\beta=-A$, which finishes the lemma.
\end{proof}

\begin{proposition}\label{prop:root-distribution-ab}
Assume that integers $A\geq 4$, $A>B\ge 1$ and $d\ge 2$.  Then the polynomial $P_{d,A,B}(x)$ defined in Theorem \ref{thm:support-abmodm}  has exactly $2d-2$ roots on the unit circle and exactly two roots off the unit circle.  These two roots are real, negative, and reciprocal to each other.
\end{proposition}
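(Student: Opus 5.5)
We follow the argument of Proposition \ref{prop:root-distribution}, using Lemma \ref{lem:identity} in place of the factorization $(x-1)P_{N,A}$. Put $F(x)=(x^2-1)P_{d,A,B}(x)=x^{2d}q(x)-q^*(x)$, of degree $2d+2$. On $|z|=1$ with $z\neq$ roots of $q$, the equation $F(z)=0$ becomes
$$z^{2d}=\frac{q^*(z)}{q(z)}=\frac{(1-\alpha z)(1-\beta z)}{(z-\alpha)(z-\beta)},$$
where $\alpha,\beta$ are the real roots from Lemma \ref{lem:q-roots}. Since $|\alpha|<1<|\beta|$, we rewrite this as $z^{2d}=B(z)$ with
$$B(z)=\frac{1-\alpha z}{z-\alpha}\cdot\frac{1-\beta z}{z-\beta}=\frac{1}{b_\alpha(z)}\cdot \beta\,\frac{z-\beta^{-1}}{1-\beta^{-1}z}\cdot\frac{1}{\beta}\cdot(\text{unimodular sign}),$$
a product of $b_\alpha(z)^{-1}$ and $b_{1/\beta}(z)$ (up to a constant of modulus one). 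This is unimodular on $\mathbb{S}^1$. Write $B(e^{it})=e^{i\beta_B(t)}$; by Lemma \ref{lem:blaschke-derivative} the net winding is $-1+1=0$, and
$$\beta_B'(t)=\theta_{1/\beta}'(t)-\theta_\alpha'(t),\qquad |\beta_B'(t)|\le \frac{1+|\beta|^{-1}}{1-|\beta|^{-1}}+\frac{1+|\alpha|}{1-|\alpha|}.$$

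The key step is to show $H(t)=2dt-\beta_B(t)$ is strictly increasing, i.e.\ $\beta_B'(t)<2d$. Because $\theta_\alpha'>0$, it suffices that $\theta'_{1/\beta}(t)\le (|\beta|+1)/(|\beta|-1)$ be less than $2d\ge 4$. Since $|\beta|>A-1\ge 3$, this bound is at most $2<4$. So the $\alpha$-factor helps rather than hurts, and the case $B=1$ ($\alpha=0$, $b_0(z)=z$) is fine. Thus $H$ increases from $H(0)$ by exactly $4d\pi$ over $[0,2\pi)$, giving exactly $2d$ distinct solutions of $z^{2d}=B(z)$ on the circle. We must check that these are genuine roots of $F$, i.e.\ that we did not divide by $q(z)=0$; this holds since $q$ has no roots on $\mathbb{S}^1$. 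Among them are $z=\pm1$, which are roots of $x^2-1$ (indeed $F(\pm1)=q(\pm1)-q^*(\pm1)=0$). Removing them leaves $2d-2$ roots of $P_{d,A,B}$ on the unit circle. One also notes $\pm1$ are not roots of $P_{d,A,B}$: we have $P(1)>0$ and $P(-1)=2-dA+(d-1)B<0$. This shows the roots are simple and that the count subtracts correctly.

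For the remaining two roots, $P_{d,A,B}$ is reciprocal of degree $2d$, so its roots come in pairs $\lambda,\lambda^{-1}$. Exactly two roots lie off the circle, forming one reciprocal pair $\{\rho,\rho^{-1}\}$. Because the coefficients are real, non-real roots come in conjugate pairs, and a non-real $\rho$ would force four roots off the circle ($\rho,\bar\rho,\rho^{-1},\bar\rho^{-1}$), so $\rho$ is real. To see that $\rho$ is negative, we evaluate $F$ on the negative axis. At $x=-A+1$ (or near $\beta$), $x^{2d}q(x)$ is negative with size about $A^{2d}$, dominating $q^*$. Since $F(x)\to+\infty$ as $x\to-\infty$ (even degree), a sign change gives a real root $<-1$. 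Alternatively, positivity of the coefficients shows $P_{d,A,B}(x)>0$ for $x>0$, so the real root cannot be positive. The positivity argument is the clean route, and we use it. The main obstacle is the derivative bookkeeping for the combined Blaschke argument $\beta_B$ (orientation and sign of each factor); we expect it to resolve as above because the inside factor contributes negatively.
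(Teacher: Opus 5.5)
Your count of the roots on the circle is the paper's argument. The paper also writes $q^*/q=b_\gamma/b_\alpha$ with $\gamma=\beta^{-1}$. It uses $\theta_\alpha'>0$ and $\theta_\gamma'\le(1+|\gamma|)/(1-|\gamma|)<2$ to get a strictly increasing phase with total increase $4\pi d$, and it discards $\pm1$ via $P_{d,A,B}(1)>0>P_{d,A,B}(-1)$.

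You depart from the paper only for the exceptional pair.
\begin{itemize}
\item The paper evaluates $F=(x^2-1)P_{d,A,B}$ at two points, finds $F(-(A-1))<0<F(-A)$, and so exhibits a real root $\rho\in(-A,-(A-1))$ together with its reciprocal.
\item You instead observe that a non-real exceptional root would force four roots off the circle, and that positive coefficients exclude positive real roots.
\end{itemize}
Your ending needs no computation. The paper's ending also localizes $\rho$, which is exactly what is reused to get $A-1<M(P_{d,A,B})<A$ in Theorem~\ref{thm:support-abmodm}.

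One sentence needs repair, because your ending depends on it. The symmetry argument presupposes that exactly two roots, counted with multiplicity, lie off the circle. That requires the $2d$ roots of $F$ on the circle to be simple. Otherwise the on-circle multiplicity would exceed $2d$, and since the off-circle roots of the anti-reciprocal $F$ come in reciprocal pairs, no root would be left off the circle. Your claim ``This shows the roots are simple'' does not follow from $P_{d,A,B}(\pm1)\neq0$, which concerns only the points $\pm1$.

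The fix is already in what you proved. The function $h(z)=z^{2d}q(z)/q^*(z)$ is analytic near $\mathbb{S}^1$, since $q^*$ has no zeros there. We have $F=q^*\,(h-1)$, and $h(e^{it})=e^{iH(t)}$ for your phase $H$. Hence $\frac{d}{dt}h(e^{it})=iH'(t)e^{iH(t)}\neq0$ because $H'>0$, so every zero of $F$ on the circle is simple.

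Alternatively, keep the sign-change argument you sketched: $F(-(A-1))<0$ while $F(x)\to+\infty$ as $x\to-\infty$. This produces a real root below $-(A-1)$ and its reciprocal, two distinct roots off the circle. Then all $2d+2$ roots of $F$ are accounted for and simplicity is automatic; this is effectively how the paper's proof closes the count.
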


\begin{proof}
Let $\alpha,\beta$ be the roots from Lemma~\ref{lem:q-roots}, and put $\gamma=\beta^{-1}.$  Note that 
$|\alpha|<1 \mbox{ and }  |\gamma|<1.$ Then,  we have 
$$q(z)=(z-\alpha)(z-\beta) \mbox{ and } q^*(z)=(1-\alpha z)(1-\beta z).
$$ 
Since $z-\beta=-\beta(1-\gamma z)$ and $1-\beta z=-\beta(z-\gamma)$,  we obtain 
$$\frac{q^*(z)}{q(z)} = \left(\frac{1-\alpha z}{z-\alpha}\right)\left(\frac{1-\beta z}{z-\beta}\right) = \frac{b_\gamma(z)}{b_\alpha(z)},
$$
where $b_\alpha(z)$ corresponds to the Blaschke factor.  By Lemma~\ref{lem:identity}, on $|z|=1$, the equation $(x^2-1)P_{d,A,B}(x)=0$ is equivalent to 
$$
z^{2d}q(z)=q^*(z), \mbox{ or } z^{2d}\frac{b_\alpha(z)}{b_\gamma(z)}=1.
$$
Write $b_\alpha(e^{it})=e^{i\theta_\alpha(t)}, b_\gamma(e^{it})=e^{i\theta_\gamma(t)},$ and set 
$$
\Psi(t) = 2dt+\theta_\alpha(t)-\theta_\gamma(t).
$$
Then  by Lemma \ref{lem:blaschke-derivative}, 
$$
\Psi'(t) = 2d + \frac{1-\alpha^2}{1-2\alpha\cos t+\alpha^2} - \frac{1-\gamma^2}{1-2\gamma\cos t+\gamma^2},
$$
where the middle term is positive.  Also, we have $\displaystyle\frac{1-\gamma^2}{1-2\gamma\cos t+\gamma^2} \le \frac{1+|\gamma|}{1-|\gamma|}.$ By Lemma~\ref{lem:q-roots}, $|\beta|>A-1$ when $B>1$, while $|\beta|=A$ when $B=1$.  In either case,
$$
|\gamma|<\frac{1}{A-1} \mbox{ and hence } \frac{1+|\gamma|}{1-|\gamma|} < \frac{A}{A-2}.
$$ 
By hypothesis  $A\ge 4$ and hence $\displaystyle\frac{A}{A-2}\le 2.$ Thus, it follows that $\Psi'(t)>2d-2>0.$ Hence the function $\Psi$ is strictly increasing. 

Note that both the Blaschke arguments increase by $2\pi$,  and hence their total changes cancel: $$\Psi(2\pi)-\Psi(0)=4\pi d.$$ Therefore  $\Psi(t)\in2\pi\Z$ has exactly $2d$ solutions on $0\le t<2\pi$. Hence $(x^2-1)P_{d,A,B}(x)$ has exactly $2d$ distinct roots on the unit circle.  Moreover,
$$
P_{d,A,B}(1)=2+dA+(d-1)B>0,\qquad
P_{d,A,B}(-1)=2-dA+(d-1)B<0,
$$
since $A>B\geq1$ and $d\geq2$. Thus $P_{d,A,B}(\pm1)\neq0$.  Since the roots $1$ and $-1$ arise from $x^2-1$, we conclude that $P_{d,A,B}$ has exactly $2d-2$ roots on the unit circle.

Since $P_{d,A,B}$ has degree $2d$, precisely two roots remain. Since $P_{d,A,B}$ is reciprocal, these two roots are reciprocal.
To see that they are real and negative, put $t=A-1\geq3$ and $F(x)=(x^2-1)P_{d,A,B}(x)=x^{2d}q(x)-q^*(x)$. As $q(-t)=B-A$ and $q^*(-t)=(B-1)t^2-At+1$, we have
$$
F(-t)\leq-t^{2d}+At-1<0.
$$
On the other hand, $F(-A)=A^{2d}(B-1)-(B-2)A^2-1>0$ for every $B\geq1$ under the hypotheses, as is checked by separating the cases $B=1$, $B=2$, and $B\geq3$. Hence $P_{d,A,B}$ has a real root in $(-A,-(A-1))$, and the other exceptional root is its reciprocal, proving the assertion.
\end{proof}

\begin{proposition}\label{prop:no-cyclotomic}
If $A>B\geq1$ are integers such that ${\rm{gcd}}(A-1, B-1)\geq 3$, then no root of the polynomial $P_{d,A,B}$ is a root of unity.
\end{proposition}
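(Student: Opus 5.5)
The plan is to imitate the argument for $P_{d,A}$ in Section~\ref{sec:8}, with Lemma~\ref{lem:root-unity} as the main tool. Suppose $\zeta$ is a root of unity with $P_{d,A,B}(\zeta)=0$. From Proposition~\ref{prop:root-distribution-ab} (or directly) we have $P_{d,A,B}(\pm1)\neq0$, so $\zeta\neq\pm1$. Lemma~\ref{lem:identity} then gives
$$\zeta^{2d}q(\zeta)=q^*(\zeta),$$
where $q(x)=x^2+Ax+B-1$. Put $\eta=\zeta^{2d}$, another root of unity, and expand both sides:
$$\eta\zeta^2+A\eta\zeta+(B-1)\eta=(B-1)\zeta^2+A\zeta+1.$$

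Let $c=\gcd(A-1,B-1)\geq3$. Reducing modulo $c\mathcal{O}_{\mathbb{Q}(\zeta)}$ uses $A\equiv1$ and $B-1\equiv0$, which gives
$$\eta\zeta^2+\eta\zeta\equiv\zeta+1 \pmod{c},\qquad\text{that is,}\qquad (\eta\zeta-1)(\zeta+1)\equiv0 \pmod{c}.$$
This does not yet have the shape $1-\xi\in c\mathcal{O}_K$ required by Lemma~\ref{lem:root-unity}, because of the factor $\zeta+1$. That factor is the step I expect to be the main obstacle.

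To handle it, I will first try multiplying the congruence by $1-\zeta$, so that it becomes a statement about $(\eta\zeta-1)(1-\zeta^2)$. The key point is that $1+\zeta$ is a unit in $\mathcal{O}_{\mathbb{Q}(\zeta)}$ unless the order of $-\zeta$ is a prime power. This follows from $N(1+\zeta)=\pm\Phi_n(-1)$, which is $\pm1$ except when $n=2p^k$ or $n=p^k$ with $p=2$. In the unit case, $1-\eta\zeta\in c\mathcal{O}_K$, so Lemma~\ref{lem:root-unity} yields $\eta\zeta=1$. In the remaining cases, $1+\zeta$ divides only a power of a single prime $p$, and $-\zeta$ has $p$-power order. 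I would then run the norm argument of Lemma~\ref{lem:root-unity} directly on $(1-\eta\zeta)(1+\zeta)\in c\mathcal{O}_K$. Comparing $c^{[K:\mathbb{Q}]}$ with $|N(1-\eta\zeta)|\,|N(1+\zeta)|$, both of which are powers of primes dividing the orders, should force $c\le2$ or a contradiction, again using $c\geq3$. If that fails, the fallback is to choose a prime $\ell\mid c$ with $\ell$ not dividing $N(1+\zeta)$; this requires either $c$ not a power of $p$, or working with $c'=c/p^{v}$ when $c'\geq3$. In that situation $1+\zeta$ is invertible modulo $c'$ and the lemma applies to $c'$. The leftover cases, $c$ a power of $2$ or $3$ with $\zeta$ of matching order, can be checked by hand.

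Once $\eta\zeta=1$, that is $\zeta^{2d+1}=1$, substitute $\eta=\zeta^{-1}$ back into the exact identity. It becomes
$$\zeta+A+(B-1)\zeta^{-1}=(B-1)\zeta^2+A\zeta+1.$$
Multiplying by $\zeta$ and rearranging gives
$$(B-1)(\zeta^3-1)+(A-1)(\zeta^2-\zeta)=0.$$
Since $\zeta\neq1$, we may divide by $\zeta-1$:
$$(B-1)(\zeta^2+\zeta+1)+(A-1)\zeta=0.$$
So $\zeta$ is a root of the real quadratic $(B-1)x^2+(A+B-2)x+(B-1)$. If $B=1$, this forces $\zeta=0$, which is impossible. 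If $B>1$, the discriminant is
$$(A+B-2)^2-4(B-1)^2=(A-B)(A+3B-4)>0,$$
since $A>B$. The roots are therefore real, but $\zeta\neq\pm1$ is a root of unity, hence non-real, a contradiction. This completes the plan.
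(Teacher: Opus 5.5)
Your endgame is correct and matches the paper's. Once $\zeta^{2d+1}=1$, substituting $\zeta^{2d}=\zeta^{-1}$ gives $(B-1)\zeta^2+(A+B-2)\zeta+(B-1)=0$. Your discriminant $(A-B)(A+3B-4)>0$ is equivalent to the paper's observation that $|\zeta+\zeta^{-1}|=(A+B-2)/(B-1)>2$.

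The gap is the step you yourself flag: removing the factor $\zeta+1$ from $(\eta\zeta-1)(\zeta+1)\in c\,\mathcal{O}_K$. It is left as a hedged case analysis, and the central claim fails in one case. You assert that comparing $c^{[K:\mathbb{Q}]}$ with $|N(1-\eta\zeta)|\,|N(1+\zeta)|$ forces a contradiction. Take $c=3$, $\zeta=e^{i\pi/3}$, and $\xi=\omega^2$ with $\omega=e^{2\pi i/3}$. Both sides equal $9$, and $(1-\xi)(1+\zeta)=(1-\omega^2)^2=-3\omega^2\in 3\,\mathcal{O}_K$, so the divisibility genuinely holds with $\xi\neq 1$. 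Your fallback (passing to $c'=c/p^{v}$) is also empty here, since $c'=1$. This case is excluded only by an extra observation you do not make: $\eta\zeta=\zeta^{2d+1}$ is an odd power of a primitive $6$th root of unity, so it has order $2$ or $6$, never $3$. Your ``check by hand'' would have to supply this. As written, the key step is unproved.

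More importantly, the obstacle is self-inflicted. You reduced the identity $(x^2-1)P_{d,A,B}=x^{2d}q-q^*$, which in effect multiplies by $x+1$ before reducing. Reduce $P_{d,A,B}$ itself instead. Since $A\equiv B\equiv 1\pmod{c}$, you get $P_{d,A,B}(x)\equiv 1+x+\cdots+x^{2d}\pmod{c}$, hence $1+\zeta+\cdots+\zeta^{2d}\in c\,\mathcal{O}_{\mathbb{Q}(\zeta)}$. Multiplying by $\zeta-1$ (not $\zeta^2-1$) gives $\zeta^{2d+1}-1\in c\,\mathcal{O}_{\mathbb{Q}(\zeta)}$, and Lemma~\ref{lem:root-unity} yields $\zeta^{2d+1}=1$ at once, with no analysis of whether $1+\zeta$ is a unit. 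This is exactly the paper's argument. Your congruence $(\eta\zeta-1)(\zeta+1)\equiv 0$ is that one multiplied by $\zeta+1$.
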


\begin{proof}
Let $g=\gcd(A-1,B-1)\ge 3.$ Then $A\equiv B\equiv 1\pmod g.$ Suppose that $\zeta$ is a root of unity satisfying $P_{d,A,B}(\zeta)=0.$ By reducing modulo $g$, we get  $P_{d,A,B}(x) \equiv 1+x+x^2+\cdots+x^{2d}\pmod{g}.$ Therefore 
$1+\zeta+\zeta^2+\cdots+\zeta^{2d} \in g\mathcal{O}_{\mathbb{Q}(\zeta)}.$  Multiplying by $\zeta-1$, we obtain 
$$
\zeta^{2d+1}-1 \in g\mathcal{O}_{\mathbb{Q}(\zeta)}.$$ Since $\zeta^{2d+1}$ is a root of unity, by  Lemma~\ref{lem:root-unity}, we arrive at  $\zeta^{2d+1}=1.$

Since $P_{d,A,B}(1) = 2+dA+(d-1)B>0$, we also have $\zeta\neq 1$.   Thus $\zeta^{2d}=\zeta^{-1}.$ Now,  substituting $x=\zeta$ into $x^{2d}q(x)-q^*(x)=0$ gives  
$$
\zeta^{-1}q(\zeta)-q^*(\zeta)=0, \mbox{ or } q(\zeta)-\zeta q^*(\zeta)=0.
$$ 
A direct calculation shows that $$q(x)-xq^*(x) = -(x-1) \left((B-1)x^2+(A+B-2)x+B-1\right).$$
Since $\zeta\neq 1$, $$(B-1)\zeta^2+(A+B-2)\zeta+B-1=0.$$ Dividing by $\zeta$, we obtain 
$$
(B-1)(\zeta+\zeta^{-1})+A+B-2=0.
$$
If $B=1$, the above equation implies that $A-1=0$, a contradiction.

If $B>1$, then 
$$\zeta+\zeta^{-1} = -\frac{A+B-2}{B-1}.
$$ 
Since  $|\zeta|=1$, we have $-2\le \zeta+\zeta^{-1}\le 2.$ But $A>B$ implies $A+B-2>2(B-1),$ and therefore
$$\frac{A+B-2}{B-1}>2,$$ a contradiction.  Thus $P_{d,A,B}(x)$ has no root of unity.
\end{proof}

\noindent{\bf Proof of Theorem \ref{thm:support-abmodm}.} (1) We now prove that $P_{d, A, B}(x)$ is irreducible over $\mathbb{Q}$. 

Suppose
$$P_{d,A,B}(x)=G(x)H(x)$$
is a nontrivial factorization in $\Z[x]$.  By Gauss's lemma, we may
take $G$ and $H$ to be monic.  Since
$$P_{d,A,B}(0)=1,$$
their constant terms are $\pm1$.  Thus, for each factor, the product
of the moduli of all its roots equals $1$.

By Proposition~\ref{prop:root-distribution-ab}, the only roots of
$P_{d,A,B}$ off the unit circle are $\rho, \rho^{-1}$ with $|\rho|>1.$  A factor cannot contain exactly one of these roots, because then the product of the moduli of its roots would be either $|\rho|>1$ or $|\rho|^{-1}<1,$ rather than $1$.

Consequently, one non-constant factor contains both roots off the unit circle,
while the other non-constant factor has all its roots on the unit
circle.  By Kronecker's theorem, a monic integer coefficient polynomial all of
whose roots lie on the unit circle is a product of cyclotomic
polynomials.  Therefore, the roots of this second factor are roots of unity,   which contradicts Proposition~\ref{prop:no-cyclotomic}.  Hence $P_{d,A,B}(x)$ is irreducible over $\mathbb{Q}$. 

\bigskip

(2) Now we compute the Mahler measure of $P_{d,A,B}(x)$ as follows. 

Let $F_{d,A,B}(x) = (x^2-1)P_{d,A,B}(x) = x^{2d}q(x)-q^*(x).$ Then by Proposition~\ref{prop:root-distribution-ab}, $P_{d,A,B}$ has one root $\rho=-r$ outside the unit circle, with $r>1$, while its other off-unit-circle root is $-r^{-1}$.  All the remaining roots lie on the unit circle. Therefore 
$$M(P_{d,A,B})=r.
$$ 
We first locate $r$.  Put $t=A-1.$ Since 
$$q(-t) = t^2-At+B-1 = B-A,$$
we have 
$$
F_{d,A,B}(-t) = t^{2d}(B-A) - \bigl((B-1)t^2-At+1\bigr) 
= -(A-B)t^{2d} -(B-1)t^2 +At -1. 
$$ 
Since $A-B\ge 1$ and $B-1\ge 0$, we get
$$
F_{d,A,B}(-t) \le -t^{2d}+At-1.
$$ 
Since $A=t+1$, we see that $At-1=t^2+t-1.$  Since $t=A-1\ge 3$ and $d\ge 2$, we get 
$$t^{2d}\ge t^4>t^2+t-1.
$$ 
Hence  $F_{d,A,B}(-(A-1)) = F_{d, A, B}(-t)\leq -t^{2d} + t^2 +t -1 <0.$ Next, 
$$
q(-A)=B-1, \mbox{ and }  q^*(-A) = (B-1)A^2-A^2+1 = (B-2)A^2+1.
$$ 
Thus 
$$F_{d,A,B}(-A) = A^{2d}(B-1)-(B-2)A^2-1.
$$ 

If $B=1$, then $F_{d,A,1}(-A)=A^2-1>0.$ 

If $B=2$, then $F_{d,A,2}(-A)=A^{2d}-1>0.$

If $B\ge 3$, then $A^{2d}(B-1) \ge A^4(B-1) > A^2(B-2)+1,$ and  again $F_{d,A,B}(-A)>0.$ Therefore $F_{d,A,B}$ has a real root in $(-A,-(A-1)).$ This root is $\rho=-r$, and hence we get
$$A-1<r<A \mbox{ and hence  we get } A-1<M(P_{d,A,B})<A.
$$ 
For the limiting value, the equation $F_{d,A,B}(-r)=0$ implies 
$$
r^{2d}\bigl(r^2-Ar+B-1\bigr) = (B-1)r^2-Ar+1.
$$ 
Since $A-1<r<A,$ the right-hand side remains bounded in absolute value by a constant depending only on $A$ and $B$, while 
$$r^{2d}\ge(A-1)^{2d}\rightarrow\infty \mbox{ as } d\to\infty.
$$ 
Therefore as $d\to\infty$, we see that $r^2-Ar+B-1\rightarrow 0.$ Now, note that every accumulation point of $r$ must therefore be a positive root of $x^2-Ax+B-1=0.$ The two roots are 
$$\frac{A\pm\sqrt{A^2-4(B-1)}}{2}.
$$ 
The smaller root lies in $[0,1)$, while $r>A-1$.  Hence the only possible limit is the larger root which is 
$$R_{A,B} = \frac{A+\sqrt{A^2-4(B-1)}}{2}.
$$
Thus $$M(P_{d,A,B})=r\rightarrow R_{A,B}.
$$
This proves the theorem. $\hfill\Box$


\bigskip

\noindent{\bf Acknowledgments.} This work was done while the first author was visiting Harish-Chandra Research Institute (HRI).    He is   thankful to HRI for providing the ambient research atmosphere. Both the authors are very grateful  to Professor B. Ramakrishnan for his constant support and  encouraging words.

\end{document}